\newtheorem{theorem}{Theorem}[section]
\newtheorem{proposition}[theorem]{Proposition}
\newtheorem{lemma}[theorem]{Lemma}
\numberwithin{equation}{section}
\newtheorem{remark}{Remark}
\begin{document}

\title{On the Cauchy problem of fractional Schr\"{o}dinger equation with Hartree type nonlinearity}

\vspace{1cm}
\date{\empty}
\author{Yonggeun Cho\\
{\it \small Department of Mathematics, and Institute of Pure and
Applied Mathematics}\\ {\it \small Chonbuk National University,
Jeonju 561-756, Republic
of Korea}\\ {\it \small e-mail: changocho@jbnu.ac.kr}\\
Gyeongha Hwang\\ {\it \small Department of Mathematical Sciences}\\ {\it
\small Seoul National University, Seoul 151-747, Republic of Korea}\\{\it\small e-mail: ghhwang@snu.ac.kr}\\
Hichem Hajaiej\\{\it\small Department of Mathematics}\\{\it\small
King
Saud University, P.O. Box 2455, 11451 Riyadh, Saudi Arabia}\\
{\it\small e-mail:
hhajaiej@ksu.edu.sa}\\
Tohru Ozawa\\{\it\small Department of Applied Physics}\\{\it\small
Waseda University, Tokyo 169-8555, Japan}\\{\it\small e-mail:
txozawa@waseda.jp}}

\maketitle

\vspace{1cm}

\begin{abstract}
We study the Cauchy problem for the fractional Schr\"{o}dinger
equation
$$
i\partial_tu = (m^2-\Delta)^\frac\alpha2 u +
F(u)\;\;\mbox{in}\;\;\mathbb{R}^{1+n},
$$
where $ n \ge 1$, $m \ge 0$, $1 < \alpha < 2$, and $F$ stands for the nonlinearity
of Hartree type:
$$F(u) = \lambda \left(\frac{\psi(\cdot)}{|\cdot|^\gamma} *
|u|^2\right)u$$ with $\lambda = \pm1, 0 <\gamma < n$, and $0 \le
\psi \in L^\infty(\mathbb R^n)$. We prove the existence and
uniqueness of local and global solutions for certain $\alpha$,
$\gamma$, $\lambda$, $\psi$. We also remark on finite time blowup of
solutions when $\lambda = -1$.\\

\noindent{\it Key Words and Phrases}. fractional Schr\"{o}dinger
equation, Hartree type nonlinearity, Strichartz estimates, finite
time
blowup\\

\noindent{\it 2010 MSC}: 35Q40, 35Q55, 47J35

\end{abstract}

\section{Introduction}

 In this paper we consider the following Cauchy problem:
\begin{align}\label{main eqn}\left\{\begin{array}{l}
i\partial_tu = D_m^\alpha u + F(u),\;\;
\mbox{in}\;\;\mathbb{R}^{1+n}
\times \mathbb{R},\; n \ge 1\\
u(x,0) = \varphi(x)\;\; \mbox{in}\;\;\mathbb{R}^n,
\end{array}\right.
\end{align}
where $D_m = (m^2-\Delta)^\frac12$, $1 < \alpha < 2$, and $F(u)$ is
nonlinear functional of Hartree type such that $F(u) = \lambda
\left(\frac{\psi(\cdot)}{|\cdot|^\gamma}
* |u|^2\right)u \equiv \lambda K_\gamma(|u|^2)u$, where $*$ denotes the convolution in
$\mathbb{R}^n$, $\lambda = \pm 1$, $\mu \ge 0$, $0 < \gamma < n$ and
$0 \le \psi \in L^\infty(\mathbb R^n)$.

When $m = 0$, the equation \eqref{main eqn} is called fractional
Schr\"{o}dinger equation which was used to describe particles in
L\'{e}vy stochastic process, and when $m > 0$, generalized semirelativistic
equation. See \cite{la1, la2, la3, lenz} and the
references therein.

If $m = 0$, then similarly to the Schr\"{o}dinger case $(\alpha =
2)$ the equation \eqref{main eqn} has scaling invariance property.
In fact the function $u_a(t, x) = a^\frac{n-\gamma+\alpha}2
u(a^\alpha t, a x)\;(a > 0)$ is also a solution of \eqref{main eqn}.
The associated invariant space is $\dot H^\frac{\gamma-\alpha}2$.
So, we call the equation $\dot H^\frac{\gamma-\alpha}2$-subcritical
if we pursue the solution $u \in H^s$ for $s >
\frac{\gamma-\alpha}2$, $\dot H^\frac{\gamma-\alpha}2$-critical for
$s = \frac{\gamma-\alpha}2$ and $\dot
H^\frac{\gamma-\alpha}2$-supercritical for $s <
\frac{\gamma-\alpha}2$.

The purpose of this paper is to establish the local and global
existence theory to the equation \eqref{main eqn} and also finite time
blowup. In this paper we study the Cauchy problem \eqref{main eqn}
in the form of the integral equation:
\begin{align}\label{int eqn}
u(t) = U(t)\varphi - i\int_0^t U(t-t')F(u)(t')dt',
\end{align}
where $$U(t)\varphi(x) =
(e^{-it(m^2-\Delta)^\frac\alpha2}\varphi)(x) = \frac1{(2\pi)^n}
\int_{\mathbb{R}^n} e^{i(x\cdot \xi - t
(m^2+|\xi|^2)^\frac\alpha2)}\widehat{\varphi}(\xi)\,d\xi.$$ Here
$\widehat{\varphi}$ denotes the Fourier transform of $\varphi$ such
that $\widehat{\varphi}(\xi) = \int_{\mathbb{R}^n} e^{-ix\cdot \xi}
\varphi(x)\,dx$.

One of the key tools for the global theory is the conservation law.
If the solution $u$ of \eqref{main eqn} has sufficient decay at
infinity and smoothness, it satisfies two conservation laws:
\begin{align}\begin{aligned}\label{consv}
&\qquad\quad\|u(t)\|_{L^2} = \|\varphi\|_{L^2},\\
&E(u) \equiv K(u) + V(u) = E(\varphi),
\end{aligned}\end{align}
where $K(u) = \frac12 \langle (m^2- \Delta)^\frac\alpha2\,u,
u\rangle$, $V(u) = \frac14 \langle F(u), u \rangle$ and
$\langle\;,\rangle$ is the complex inner product in $L^2$. The energy
space is $H^\frac\alpha2$. So, the equation \eqref{main eqn} is
referred to be energy critical if $\gamma = 2\alpha$, subcritical if
$\gamma < 2\alpha$ and supercritical if $\gamma
> 2\alpha$, respectively. Similarly we use the terminology mass critical, subcritical, supercritical for the case $\gamma = \alpha$,
$\gamma < \alpha$, $\gamma > \alpha$, respectively. For the proof of
\eqref{consv} a regularizing method is simply applicable as in
\cite{lenz} in the case of $0 < \gamma \le \alpha$. For local
solutions constructed by a contraction argument based on the
Strichartz estimate stated below, the case of $\alpha < \gamma \le
2\alpha$ is treated by exactly the same method as in \cite{oz}
without using approximate or regularizing approach. The second tool
is the Strichartz estimates. In Section 2 we recall three versions
which will be used in the argument of the paper.

In Section 3, without resort to Strichartz estimates local and
global existence results are treated for $m \ge 0$ through the contraction
argument and the conservation laws above. This result is an
extension of the work of Lenzmann \cite{lenz} and \cite{chooz} to
fractional NLS. In particular, we show the global existence in the
focusing mass critical case, that is, $\gamma = \alpha, \lambda = -1$, for
the initial norm with $\|\varphi\|_{L^2} <
\|Q\|_{L^2}/\|\psi\|_{L^\infty}^\frac12$, where $Q$ is the solution of $
(-\Delta )^\frac\alpha2 Q - (|x|^{-\gamma}* |Q|^2)Q = -Q.$ We also
show the solution norm can be estimated uniformly in terms of $m$ in
finite time, which enables us to consider two types of limiting
problems ($m \to 0$ and $m \to \infty$). See Remark \ref{limiting}
below.

In Section 4, we consider the local and global existence via
standard Strichartz estimates \eqref{homo str} and \eqref{inhomo
str} below. The advantage of Strichartz estimate is to give a chance
for existence results of lower regularity than ones without using Strichartz estimates.
However, owing to the regularity loss of Strichartz estimates, it is hard to handle  the
critical problem. On the other hand, such
estimates enable us to get a small data global existence results and
scattering for the case $2\alpha < \gamma < n$.

In Section 5, we treat the critical problem. To handle the critical
regularity one needs Strichartz estimate without regularity loss as
Schr\"{o}dinger case. Recently, such estimates have been developed
independently in \cite{guwa} and \cite{cholee}, when radial symmetry
or angular regularity is assumed. See \eqref{rad str1} and
\eqref{rad str2} below. Using these, we show the global existence of
radial solutions in $H^\frac{\gamma-\alpha}2$ for suitable $\gamma$
and $\alpha$. In \cite{guwa}, the authors considered the equation
with $m = 0$ and power type nonlinearity.

Section 6 is devoted to the global existence of small data in
critical solution space below $L^2$, that is $\dot
H^\frac{\gamma-\alpha}2, \gamma < \alpha$ without radial symmetry.
For this we use weighted Strichartz estimates \eqref{lomegainfty}
and \eqref{lomega2} in the same way as in \cite{chhwoz}. When $m >
0$, we could not control the homogeneous $\dot H^s$ norm by the
weighted Strichartz estimates. Thus we only consider the case $m =
0$. It would be so interesting to show the global existence when $m
> 0$. For the simplicity of presentation we try 3-d case in Section 6. We leave
the general case to the readers.

In the last section, we study a finite time blowup for the focusing case. For this we consider a massive mass critical Hartree nonlinearity given by the mass $m > 0$ and the potential $-\psi(x)/|x|^\alpha$ where $\psi' \leq 0$ and $|\psi'| \lesssim \frac 1{\rho}$, and a initial data with $E(\varphi) < 0$. Then by adapting the Virial argument of \cite{frohlenz2} and \cite{chkl} we show the nonnegative
quantity $\langle u, x \cdot D_m^{2-\alpha} x u \rangle$ is estimated as
follows: for any $m \ge 0$ and $t \in [0, T^*)$
\begin{align}\label{mom}
\langle u, x \cdot D_m^{2-\alpha} x u \rangle \le 2\alpha^2E(\varphi)t^2 + 2\alpha(\langle \varphi, A \varphi \rangle+ C\|\varphi\|_{L^2}^4)t + \langle \varphi, M \varphi\rangle.
\end{align}
Since $E(\varphi) < 0$, the maximal existence time $T^*$ of solution should be finite. In \cite{chkl}, the authors considered massless case and they obtained finite time blowup for mass critical equations. We extended their results to massive case and show that the constant $C$ in \eqref{mom} does not depend on $m > 0$. For the proof of \eqref{mom} we show $L^2$ operator norm of the commutator $[D_m^\alpha, |x|^2 K_\alpha(|u|^2)]$ is bounded by
$\|\varphi\|_{L^2}^4$ for which we need to assume that radial symmetry of solution. We also establish some propagation estimates of moment at the end of Section 7. 




Now we close this section by introducing some notations.
The mixed norm $\|F\|_{L^qX}$ means $(\int_{\mathbb{R}}\|F(t, \cdot)\|_X^q\,dt)^\frac1q$. We will use
the notations $|\nabla| = \sqrt{-\Delta}$, $\dot H_r^s =
|\nabla|^{-s}L^r\;\;(\dot H_2^s = \dot H^s)$ and $H_r^s = (1 -
\Delta)^{-s/2} L^r\;\; (H^s = H_2^s)$. Hereafter, we denote the
space $L_T^q (B)$ by $L^q(0,T; B)$ and its norm by
$\|\cdot\|_{L_T^qB}$ for some Banach space $B$, and also $L^q(B)$
with norm $\|\cdot\|_{L^q B}$ by $L^q(0, \infty; B)$, $1 \le q \le
\infty$. If not specified, throughout this paper, the notation $A \lesssim B$
and $A \gtrsim B$ denote $A \le CB$ and $A \ge C^{-1}B$,
respectively. Different positive constants possibly depending on $n, \alpha$
and $\gamma$ might be denoted by the same letter $C$. $A \sim B$
means that both $A \lesssim B$ and $A \gtrsim B$ hold.

\section{Strichartz estimates}
In this paper we will
treat three versions of Strichartz estimates. We first consider the
standard Strichartz estimate for the unitary group $U(t)$ (see
\cite{cox}):
\begin{align}
&\|U(t)\varphi\|_{L_T^{q_1}L^{r_1}} \le C c_\alpha^{\frac12-\frac1{r_1}}\|D_m^{\frac{n(2-\alpha)}2\left(\frac12- \frac1{r_1}\right)}\varphi\|_{L^2},\label{homo str}\\
&\|\int_0^t U(t-t')F(t')\,dt'\|_{L_T^{q_1}L^{r_1}} \le
Cc_\alpha^{1-\frac1{r_1} - \frac1{r_2}}\|\mathcal
D_m^{\frac{n(2-\alpha)}2\left(1-\frac1{r_1}-\frac1{r_2}\right)}
F\|_{L_T^{q_2'}L^{r_2'}},\label{inhomo str}
\end{align}
where $c_\alpha = (\alpha-1)^{-1}$ and the constant $C$ does not depend
on $m$. These estimates hold for $n \ge 1$ and the pairs $(q_i,
r_i), i = 1, 2$ satisfying that $2 \le q_i, r_i \le \infty$, $\frac
2{q_i} + \frac{n}{r_i} = \frac n2$ and $(q_i, r_i) \neq (2,
\infty)$.  The constant $c_\alpha$ shows the sharpness of the estimates near $\alpha = 1$. We will use the estimates \eqref{homo str} and
\eqref{inhomo str} for the existence of $H^s$ solutions for some $s < \frac \gamma2$ in Section 5.

Next we will use the recently developed radial Strichartz estimates
\cite{cholee, guwa} as follows: for radial functions $\varphi$ and
$F$
\begin{align}
&\qquad \quad \|U(t)\varphi\|_{L_T^{q_1}L^{r_1}} \le Cc_\alpha^{\frac12-\frac1{r_1}}\|D_m^\theta \varphi\|_{L^2},\label{rad str1}\\
&\|\int_0^t U(t-t')F(t')\,dt'\|_{L_T^{q_1}L^{r_1}} \le C
c_\alpha^{1-\frac1{r_1} -
\frac1{r_2}}\|F\|_{L_T^{q_2'}L^{r_2'}},\label{rad str2}
\end{align}
where $C$ does not depend on $m$. Here $\theta \in \mathbb R$ and $n
\ge 2$. The pairs $(q_i, r_i), i = 1, 2$, satisfy the range
conditions $2 \le q_i, r_i \le \infty, q_2 \neq 2$,
$$\frac n2\left(\frac12-\frac1{r_i}\right) \le \frac1{q_i} \le \frac{2n-1}2\left(\frac12-\frac1{r_i}\right),$$
$$(n, q_i, r_i) \neq (2, 2, \infty), \quad (q_i, r_i) \neq (2,
\frac{2(2n-1)}{2n-3}),$$ and the gap condition
$$
\frac\alpha{q_1} + \frac n{r_1} = \frac n2 - \theta,\qquad
\frac\alpha{q_2} + \frac n{r_2} = \frac n2 + \theta.
$$
These will be used for global well-posedness of radial solution with critical regularity in Section 6.

Finally to treat the well-posedness in the case of below $L^2$ we will use the weighted Strichartz estimates:\\
(1) Let $0 < a < \frac{n-1}2$ and $\beta_1 \leq \frac{n-1}2 - a$.
Then we have
\begin{align}\label{lomegainfty}
\||x|^a |\nabla|^{a - \frac n2}d_\omega^{\beta_1}
U(t)\varphi\|_{L_t^\infty L_r^\infty L_\omega^2} \le
C\|\varphi\|_{L_x^2}.
\end{align}
\noindent (2) Let $-\frac n2 < b < -\frac 12$ and $\beta_2 \le
-\frac12 -b $. Then we have
\begin{align}\label{lomega2}
\||x|^b
|\nabla|^{1+b}D_m^{-\frac{2-\alpha}2}d_\omega^{\beta_2}U(t)\varphi\|_{L_{t,x}^2}
\le C \|\varphi\|_{L_x^2}.
\end{align}

Here $d_\omega = \sqrt{1 - \Delta_\omega}$, $\Delta_\omega$ is
the Laplace-Beltrami operator on the unit sphere $S^{n-1}$ and $C$ does not depend on $m$. We have used the notation $\|f\|_{L_r^{r_1}L_\omega^{r_2}} = (\int_0^\infty (\int_{S^{n-1} } |f(\rho\omega)|^{r_2}\,d\omega)^\frac{r_1}{r_2}\,\rho^{n-1}\,d\rho)^\frac1{r_1}$.
For the part(1) see \cite{chooz} and \cite{fawa}. For \eqref{lomega2} we refer to \cite{fawa} and also to
\cite{chozsash, choozlee} for earlier and more general versions,
respectively.

\section{Existence I}
In this section, we study the local and global existence without
resort to Strichartz estimates.

Let us first introduce the following local existence result.
\begin{proposition}\label{local}
Let $m \ge 0$, $0 < \gamma < n$ and $n \ge 1$. Suppose $\varphi \in
H^s(\mathbb{R}^n)$ with $s \ge \frac \gamma 2$. Then there exists a
positive time $T$ such that \eqref{int eqn} has a unique solution $u
\in C([0,T]; H^s)$ with $\|u\|_{L_T^\infty H^s} \le
C\|\varphi\|_{H^s}$, where $C$ does not depend on $m \ge 0$.
\end{proposition}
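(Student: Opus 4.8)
The plan is to run a standard Banach fixed-point argument for the integral equation \eqref{int eqn} in the space $X_T = C([0,T];H^s)$ equipped with the norm $\|u\|_{L^\infty_T H^s}$, on a closed ball of radius $R = 2C_0\|\varphi\|_{H^s}$ where $C_0$ bounds the free propagator. First I would record that $U(t)$ is unitary on $H^s$ for every $m \ge 0$ (it is a Fourier multiplier of modulus one), so $\|U(t)\varphi\|_{H^s} = \|\varphi\|_{H^s}$ with a constant independent of $m$; this already gives the linear term. Then I would define the map $\Phi(u)(t) = U(t)\varphi - i\int_0^t U(t-t')F(u)(t')\,dt'$ and estimate, using Minkowski's inequality in $t'$,
\begin{align*}
\|\Phi(u)\|_{L^\infty_T H^s} \le \|\varphi\|_{H^s} + T\,\sup_{0 \le t' \le T}\|F(u)(t')\|_{H^s}.
\end{align*}

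The analytic heart of the argument is the nonlinear estimate: $\|F(v)\|_{H^s} \lesssim \|v\|_{H^s}^3$ and the Lipschitz bound $\|F(v)-F(w)\|_{H^s} \lesssim (\|v\|_{H^s}^2 + \|w\|_{H^s}^2)\|v-w\|_{H^s}$, valid precisely when $s \ge \gamma/2$. To prove the first, write $F(v) = \lambda\,(K_\gamma(|v|^2))\,v$ with $K_\gamma(f) = (\psi/|\cdot|^\gamma)*f$. Since $0 \le \psi \in L^\infty$, it suffices to control $(|\cdot|^{-\gamma}*|v|^2)v$ in $H^s$. One uses the fractional Leibniz rule to distribute $\langle\nabla\rangle^s$, reducing matters to bounding $\langle\nabla\rangle^s\!\big(|\cdot|^{-\gamma}*|v|^2\big)$ in a suitable $L^r$ and $|\cdot|^{-\gamma}*|v|^2$ in $L^\infty$; the Hardy–Littlewood–Sobolev inequality handles the convolution (here is where $0<\gamma<n$ enters), Sobolev embedding $H^s \hookrightarrow L^q$ converts everything to $H^s$ norms, and the condition $s \ge \gamma/2$ is exactly what makes the exponents close — morally $K_\gamma$ maps $H^s\cdot H^s$ into a space that, multiplied by $H^s$, lands back in $H^s$ when $\gamma \le 2s$. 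I would treat the endpoint case $s = \gamma/2$ with care since that is the threshold, and note that fractional orders $s$ (not just integers) require Kato–Ponce/Grafakos–Oh type commutator estimates rather than the plain product rule. Crucially none of these estimates involve $m$, so the constant $C$ is $m$-independent.

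With the nonlinear estimate in hand, choosing $T$ small enough that $T R^2 \lesssim 1$ (quantitatively $T \le c\,(C_0\|\varphi\|_{H^s})^{-2}$) makes $\Phi$ map the ball of radius $R$ into itself and contract by a factor $\le 1/2$; the Banach fixed-point theorem then yields a unique $u \in X_T$ with $\|u\|_{L^\infty_T H^s} \le R = 2C_0\|\varphi\|_{H^s}$, i.e. the asserted bound with $C = 2C_0$ independent of $m$. Continuity in time, $u \in C([0,T];H^s)$, follows from the continuity of $t\mapsto U(t)\varphi$ in $H^s$ (dominated convergence in frequency) together with continuity of the Duhamel integral, which is immediate from the uniform-in-$t$ bound on $F(u)$. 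For uniqueness in the full class $C([0,T];H^s)$ (not merely in the ball) I would run the difference estimate on a possibly shorter interval and bootstrap, or invoke a Gronwall argument on $\|u-\tilde u\|_{L^\infty_t H^s}$. The main obstacle I anticipate is purely technical: proving the fractional-order Hartree estimate $\|F(v)\|_{H^s}\lesssim \|v\|_{H^s}^3$ cleanly at the endpoint $s=\gamma/2$ and for the full range $0<\gamma<n$, which forces a somewhat careful Littlewood–Paley / fractional Leibniz bookkeeping; everything else (unitarity of $U(t)$, the contraction mechanics, the $m$-independence) is routine.
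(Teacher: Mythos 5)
Your proposal is correct and follows essentially the same route as the paper: Duhamel formula, a crude $T\cdot\sup_t\|F(u)\|_{H^s}$ bound, and the trilinear Hartree estimate obtained from the fractional Leibniz rule, Hardy--Littlewood--Sobolev, the Hardy-type bound $\|K_\gamma(|u|^2)\|_{L^\infty}\lesssim\|u\|_{\dot H^{\gamma/2}}^2$ and the embedding $H^{\gamma/2}\hookrightarrow L^{2n/(n-\gamma)}$, all of which are $m$-independent. The only cosmetic difference is that you contract in the full $C([0,T];H^s)$ norm (which does work, since the difference estimate closes in $H^s$ for $s\ge\gamma/2$), whereas the paper takes the ball in $L^\infty_T H^s$ with the weaker metric $\|u-v\|_{L^\infty_T L^2}$, thereby needing only an $L^2$-level Lipschitz bound.
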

\begin{proof} Let $(X(T,\rho), d)$ be a complete metric space with metric
$d$ defined by
$$
X(T, \rho) = \{u \in L_T^\infty (H^s(\mathbb{R}^n)):\;
\|u\|_{L_T^\infty H^s} \le \rho\}, \;\; d_X(u, v) = \|u -
v\|_{L_T^\infty L^2}.
$$

Now we define a mapping $\mathcal N: u \mapsto \mathcal N(u)$ on
$X(T, \rho)$ by
\begin{align}\label{nonlinear func}
\mathcal N(u)(t) = U(t)\varphi - i\int_0^t U(t-t')F(u)(t')\,dt'.
\end{align}
Our strategy is to use the standard contraction mapping argument. To
do so, let us introduce a generalized Leibniz rule (see Lemma A1
$\sim$ Lemma A4 in Appendix of \cite{ka}).
\begin{lemma}\label{lei}
For any $s \ge 0$ we have
$$
\||\nabla|^s(uv)\|_{L^r} \lesssim \||\nabla|^s
u\|_{L^{r_1}}\|v\|_{L^{q_2}} + \|u\|_{L^{q_1}}\||\nabla|^s v\|_{L^{r_2}},
$$
where $\frac1{r} = \frac1{r_1} + \frac1{q_2} =
\frac1{q_1}+\frac1{r_2},\quad r_i \in (1, \infty),\; q_i \in
(1,\infty],\quad i = 1, 2.
$
\end{lemma}
\medskip

Then for $u \in X(T, \rho)$ and $s \ge \frac\gamma2$ we have
\begin{align}\begin{aligned}\label{cont1}
\|\mathcal N(u)\|_{L_T^\infty H^s}&\le \|\varphi\|_{H^s} + T\|F(u)\|_{L_T^\infty H^s}\\
&\lesssim \|\varphi\|_{H^s} +
T\left(\|K_{\gamma}(|u|^2)\|_{L_T^\infty
L^\infty}\|u\|_{L_T^\infty H^s}\right.\\
&\qquad\qquad\qquad\qquad\qquad\left. +
\|K_{\gamma}(|u|^2)\|_{L_T^\infty H_{\frac{2n}{\gamma}}^s}\|u\|_{L_T^\infty L^\frac{2n}{n - \gamma}}\right)\\
&\lesssim \|\varphi\|_{H^s} + T\left( \|u\|_{L_T^\infty H^\frac
\gamma2}^2\|u\|_{L_T^\infty H^s} + \|u\|_{L_T^\infty
L^\frac{2n}{n-\gamma}}^2\|u\|_{L_T^\infty H^s}\right)\\
&\lesssim \|\varphi\|_{H^s} + T \|u\|_{L_T^\infty
H^\frac\gamma2}^2\|u\|_{L_T^\infty H^s} \lesssim \|\varphi\|_{H^s} +
T\rho^3.
\end{aligned}\end{align}
Here we have used the trivial inequality $$K_\gamma(v) =
\int_{\mathbb{R}^n} \frac{\psi(x-y)}{|x-y|^\gamma}v(y)\,dy \le
\|\psi\|_{L^\infty}\int_{\mathbb R^n} |x-y|^{-\gamma}v(y)\,dy$$ for
$v \ge 0$,
the Hardy-Littlewood-Sobolev inequality,
Lemma \ref{lei}, the Hardy type inequality
\begin{align}\label{supx frac}
\sup_{x \in \mathbb{R}^n}\left|\int_{\mathbb{R}^n}
\frac{|u(x-y)|^2}{|y|^\gamma}\,dy\right| \lesssim
\|u\|_{\dot{H}^\frac \gamma2}^2,
\end{align}
and we used the Sobolev embedding $H^\frac \gamma2
\hookrightarrow L^\frac{2n}{n - \gamma}$.

If we choose $\rho$ and $T$ such as $\|\varphi\|_{H^s} \le \rho/2$
and $CT\rho^3 \le \rho/2$, then $\mathcal N$ maps $X(T, \rho)$ to
itself.

Now we show that $\mathcal N$ is a Lipschitz map for sufficiently
small $T$. Let $u, v \in X(T, \rho)$. Then we have
\begin{align*}
d_X(\mathcal N(u)&, \mathcal N(v))\\
&\lesssim T\left\|K_{\gamma}(|u|^2)u -
K_\gamma(|v|^2)v \right\|_{L_T^\infty L^2}\\
&\lesssim T\left(\left\|K_\gamma(|u|^2)(u - v)\right\|_{L_T^\infty
L^2} + \left\|K_\gamma(|u|^2 -
|v|^2)v\right\|_{L_T^\infty L^2}\right)\\
&\lesssim T\left(\|u\|_{L_T^\infty H^\frac \gamma2}^2d(u, v) +
\|K_\gamma(|u|^2 - |v|^2)\|_{L_T^\infty
L^\frac{2n}{\gamma}}\|v\|_{L_T^\infty
L^\frac{2n}{n-\gamma}}\right)\\
&\lesssim T(\rho^2d(u, v) + \rho\||u|^2 -
|v|^2\|_{L_T^\infty L^\frac{2n}{2n-\gamma}})\\
&\lesssim T(\rho^2  + \rho(\|u\|_{L_T^\infty L^\frac{2n}{n-\gamma}}
+ \|v\|_{L_T^\infty
L^\frac{2n}{n-\gamma}}))d_X(u, v)\\
&\lesssim T\rho^2 d_X(u, v).
\end{align*}
The above estimate implies that the mapping $\mathcal N$ is a
contraction, if $T$ is sufficiently small.

The uniqueness and time continuity follows easily from the equation
\eqref{int eqn} and a similar contraction argument. This completes
the proof of Proposition \ref{local}. \end{proof}

\bigskip

From the conservation laws \eqref{consv}, we get the following
global well-posedness.
\begin{theorem}\label{global1}
Let $m \ge 0$, $0 < \gamma \le \alpha$ for $n\ge 2$, $0 < \gamma <
1$ for $n = 1$, and $s \ge \frac \gamma2$. Let $T^*$ be the maximal
existence time of the solution $u$ as in Proposition $\ref{local}$. Then
if $\lambda = +1$, or if $\lambda = -1$ and $\|\varphi\|_{L^2}$ is
sufficiently small, then $T^* = \infty$. Moreover $\|u(t)\|_{H^s}
\le C\|\varphi\|_{H^s}e^{C(|E(\varphi)| + \|\varphi\|_{L^2}^2)t}$,
where $C$ does not depend on $m \ge 0$.
\end{theorem}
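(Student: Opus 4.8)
The plan is to combine the local existence from Proposition~\ref{local} with an a priori bound on $\|u(t)\|_{H^s}$ obtained from the two conservation laws in \eqref{consv}, then run the standard continuation argument. The first step is to control the energy's potential term. Writing $V(u) = \tfrac14\langle F(u),u\rangle = \tfrac\lambda4 \int K_\gamma(|u|^2)|u|^2$, the trivial bound $K_\gamma(v)\le \|\psi\|_{L^\infty}(|x|^{-\gamma}*v)$ together with the Hardy-type inequality \eqref{supx frac} (or, equivalently, Hardy--Littlewood--Sobolev combined with Sobolev embedding) gives
\begin{align*}
|V(u)| \lesssim \|\psi\|_{L^\infty}\,\|u\|_{\dot H^{\gamma/2}}^2\,\|u\|_{L^2}^2 \le \varepsilon\|u\|_{\dot H^{\alpha/2}}^2\,\|u\|_{L^2}^2 + C_\varepsilon\|u\|_{L^2}^2,
\end{align*}
where in the last step I interpolate $\dot H^{\gamma/2}$ between $L^2$ and $\dot H^{\alpha/2}$ using $\gamma\le\alpha$ (for $n\ge2$) and Young's inequality; for $n=1$ the restriction $\gamma<1<\alpha$ plays the same role. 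Since $K(u) = \tfrac12\langle D_m^\alpha u,u\rangle \sim \|u\|_{\dot H^{\alpha/2}}^2 + m^\alpha\|u\|_{L^2}^2 \ge \|u\|_{\dot H^{\alpha/2}}^2$, choosing $\varepsilon \|\varphi\|_{L^2}^2 \le \tfrac14$ absorbs the bad term: when $\lambda=+1$ this is automatic, and when $\lambda=-1$ it is exactly the smallness hypothesis on $\|\varphi\|_{L^2}$. Combined with mass conservation this yields
\begin{align*}
\|u(t)\|_{\dot H^{\alpha/2}}^2 \lesssim K(u(t)) \lesssim |E(\varphi)| + \|\varphi\|_{L^2}^2,
\end{align*}
uniformly in $m\ge0$ and in $t<T^*$. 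In particular $\|u(t)\|_{H^{\alpha/2}}$, hence $\|u(t)\|_{H^{\gamma/2}}$ since $\gamma\le\alpha$, stays bounded on the whole existence interval.

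The second step upgrades this to a bound on the full $H^s$ norm for $s\ge\gamma/2$ via a Gronwall argument on the integral equation \eqref{int eqn}. Exactly as in the estimate \eqref{cont1} in the proof of Proposition~\ref{local}, but now keeping the $\dot H^{\gamma/2}$ factors explicit rather than bounding them by $\rho$, one gets for $0\le t<T^*$
\begin{align*}
\|u(t)\|_{H^s} \le \|\varphi\|_{H^s} + C\int_0^t \|u(t')\|_{H^{\gamma/2}}^2\,\|u(t')\|_{H^s}\,dt' \le \|\varphi\|_{H^s} + C(|E(\varphi)|+\|\varphi\|_{L^2}^2)\int_0^t \|u(t')\|_{H^s}\,dt',
\end{align*}
using the HLS inequality, Lemma~\ref{lei}, \eqref{supx frac} and the Sobolev embedding $H^{\gamma/2}\hookrightarrow L^{2n/(n-\gamma)}$ together with the Step~1 bound. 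Gronwall's inequality then gives $\|u(t)\|_{H^s}\le \|\varphi\|_{H^s}\,e^{C(|E(\varphi)|+\|\varphi\|_{L^2}^2)t}$, which is precisely the claimed estimate, and the constant $C$ inherits $m$-independence from Proposition~\ref{local} and from the $m$-independence of the $\dot H^{\alpha/2}$ control (the $m^\alpha\|u\|_{L^2}^2$ term in $K$ only helps).

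The final step is the standard continuation/blowup alternative: if $T^*<\infty$ then $\limsup_{t\to T^*}\|u(t)\|_{H^s}=\infty$, which contradicts the uniform bound just established; hence $T^*=\infty$. I expect the main technical obstacle to be Step~1 --- specifically, making the absorption argument genuinely uniform in $m$ and rigorous at the endpoint $\gamma=\alpha$ (mass-critical, $\lambda=-1$), where no room is left in the interpolation and one must invoke the sharp constant, i.e. compare $\|\varphi\|_{L^2}$ with $\|Q\|_{L^2}/\|\psi\|_{L^\infty}^{1/2}$ as flagged in the introduction; the case $n=1$ with $\gamma<1$ needs to be checked separately since $H^{\alpha/2}$ with $\alpha>1$ does not embed as conveniently, though $\gamma<1$ gives enough subcriticality. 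The remaining ingredients (HLS, Leibniz, Sobolev, Gronwall) are routine.
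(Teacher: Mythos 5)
Your argument is essentially the paper's: the a priori bound comes from the conservation laws \eqref{consv} together with the potential estimate $|V(u)|\lesssim \|u\|_{\dot H^{\gamma/2}}^2\|u\|_{L^2}^2$ (i.e. \eqref{supx frac} plus mass conservation), and the $H^s$ bound then follows from the \eqref{cont1}-type nonlinear estimate and Gronwall, with the standard continuation alternative finishing the proof. The one place you deviate is the absorption step in the focusing case, and it is exactly where your writeup is incomplete: by interpolating $\dot H^{\gamma/2}$ between $L^2$ and $\dot H^{\alpha/2}$ and applying Young's inequality you need $\gamma<\alpha$ strictly, so the mass-critical endpoint $\gamma=\alpha$ (included in the theorem for $n\ge 2$) is left open, and your proposed repair via the sharp ground-state constant, though it would cover small data, is heavier than necessary and not what the paper uses. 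The paper absorbs at the level of $\dot H^{\gamma/2}$ itself: from $K(u)\ge\tfrac12\|u\|_{\dot H^{\alpha/2}}^2$ and $\|u\|_{\dot H^{\gamma/2}}^2\le\|u\|_{\dot H^{\alpha/2}}^2+\|u\|_{L^2}^2$ one gets $\|u\|_{\dot H^{\gamma/2}}^2\le 2|E(\varphi)|+2C\|\varphi\|_{L^2}^2\|u\|_{\dot H^{\gamma/2}}^2+\|\varphi\|_{L^2}^2$, and the hypothesis that $\|\varphi\|_{L^2}$ is small (relative to the non-sharp constant of \eqref{supx frac}) absorbs the middle term uniformly in $m$ and for all $0<\gamma\le\alpha$; the sharp constant is only needed for the refined threshold of Theorem \ref{global1-1}. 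Two minor further points: your Young step actually produces $C_\varepsilon\|u\|_{L^2}^{p}$ with a power $p>2$ depending on $\gamma/\alpha$ rather than $C_\varepsilon\|u\|_{L^2}^2$ (harmless under the stated hypotheses, but it alters the constant in the exponential if not tracked), and, as a byproduct, your interpolation route shows that for $\gamma<\alpha$ the focusing case requires no smallness at all, a strengthening the theorem does not claim.
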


\begin{proof}
From the estimate \eqref{supx frac} and $L^2$ conservation, we have
\begin{align}\label{potential est}
|V(u)| \lesssim \|u\|_{\dot{H}^\frac\gamma2}^2\|u\|_{L^2}^2.
%
\end{align}
Thus if $\lambda = +1$ or if $\lambda = -1$ and $\|\varphi\|_{L^2}$
is sufficiently small, then since $\gamma \le \alpha$
\begin{align}\label{uniform h12}
\|u(t)\|_{\dot{H}^\frac\gamma2}^2 \le C(|E(u)| +
\|\varphi\|_{L^2}^2) = C(|E(\varphi)| + \|\varphi\|_{L^2}^2).
\end{align}
From \eqref{uniform h12} and a similar estimate to \eqref{cont1}, we
have
\begin{align}\begin{aligned}\label{gronwal}
\|u(t)\|_{H^s} &\lesssim \|\varphi\|_{H^s} + \int_0^t
\|u\|_{H^\frac\gamma2}^2\|u\|_{H^s}\,dt'\\
&\lesssim \|\varphi\|_{H^s} + (|E(\varphi)| +
\|\varphi\|_{L^2}^2)\int_0^t\|u\|_{H^s}\,dt'.
\end{aligned}\end{align}
Gronwall's inequality shows that
$$
\|u(t)\|_{H^s} \le C\|\varphi\|_{H^s}\exp(C(|E(\varphi)| +
\|\varphi\|_{L^2}^2)t).
$$
This completes the proof of Theorem \ref{global1}.
\end{proof}

If $\psi = 1$, $m \ge 0$, $\gamma = \alpha$ and $\lambda = -1$, then
\eqref{main eqn} is $L^2$-critical focusing FNLS and more precise
statement is possible for global existence. In fact, \eqref{main
eqn} has a ground state $Q$ in $H^\frac\alpha2$ (see Theorem 1.8 of
\cite{hmow}), which satisfies
$$
(-\Delta )^\frac\alpha2 Q - (|x|^{-\gamma}* |Q|^2)Q = -Q
$$
and is a decreasing minimizer of the problem
\begin{align}\label{mini}
2\|Q\|_{L^2}^2 = \inf_{u \in H^\frac\alpha2 \setminus
\{0\}}\frac{\|u\|_{\dot H^\frac\alpha2}^2\|u\|_{L^2}^2}{|V_1(u)|},
\end{align}
where $V_1(u) = -\frac14\int\!\!\int |x-y|^{-\gamma}|u(x)|^2|u(y)|^2\,dxdy$.
Then we have the following.

\begin{theorem}\label{global1-1}
Let $m \ge 0$, $\gamma = \alpha$, $n\ge 2$ and $s \ge \frac
\gamma2$. Suppose $T^*$ be the maximal existence time of the
solution $u$ as in Theorem $\ref{local}$. Then if $\lambda = -1$ and
$\|\varphi\|_{L^2} < \|Q\|_{L^2}/\|\psi\|_{L^\infty}^\frac12$, then $T^* =
\infty$.
\end{theorem}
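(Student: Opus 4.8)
The plan is to run the standard Glassey/Weinstein-type argument adapted to the fractional Laplacian, using the sharp Gagliardo--Nirenberg-type inequality encoded in the variational characterization \eqref{mini} of the ground state $Q$. First I would reduce matters to an a priori bound on $\|u(t)\|_{\dot H^{\alpha/2}}$: by Theorem \ref{global1} it suffices to establish global existence in the energy space $H^{\alpha/2}$ (the passage to general $s \ge \gamma/2 = \alpha/2$ then follows from the Gronwall argument already used in the proof of Theorem \ref{global1}, since once $\|u(t)\|_{\dot H^{\alpha/2}}$ is bounded on any finite interval the estimate \eqref{gronwal} closes). So the real task is to show that the local $H^{\alpha/2}$ solution cannot blow up in finite time when $\lambda = -1$, $\gamma = \alpha$, and $\|\varphi\|_{L^2} < \|Q\|_{L^2}/\|\psi\|_{L^\infty}^{1/2}$.

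Next I would bound the potential energy. Using $0 \le \psi \le \|\psi\|_{L^\infty}$ pointwise and $\lambda=-1$, one has $V(u) = \tfrac14\langle F(u),u\rangle = -\tfrac14\int\!\!\int \tfrac{\psi(x-y)}{|x-y|^\alpha}|u(x)|^2|u(y)|^2\,dxdy \ge \|\psi\|_{L^\infty} V_1(u)$, hence $|V(u)| \le \|\psi\|_{L^\infty}\,|V_1(u)|$. Then the variational inequality \eqref{mini}, namely $|V_1(u)| \le \dfrac{\|u\|_{\dot H^{\alpha/2}}^2\|u\|_{L^2}^2}{2\|Q\|_{L^2}^2}$, combined with $L^2$ conservation gives
\begin{align*}
|V(u)| \le \frac{\|\psi\|_{L^\infty}\,\|\varphi\|_{L^2}^2}{2\|Q\|_{L^2}^2}\,\|u(t)\|_{\dot H^{\alpha/2}}^2 .
\end{align*}
Writing $\eta := \dfrac{\|\psi\|_{L^\infty}\,\|\varphi\|_{L^2}^2}{\|Q\|_{L^2}^2} < 1$ by hypothesis, energy conservation $K(u) + V(u) = E(\varphi)$ together with $K(u) = \tfrac12\|u(t)\|_{\dot H^{\alpha/2}}^2 + \tfrac12 m^\alpha\|\varphi\|_{L^2}^2 \ge \tfrac12\|u(t)\|_{\dot H^{\alpha/2}}^2$ yields
\begin{align*}
\frac12\|u(t)\|_{\dot H^{\alpha/2}}^2 \le E(\varphi) + |V(u)| \le E(\varphi) + \frac{\eta}{2}\|u(t)\|_{\dot H^{\alpha/2}}^2 ,
\end{align*}
so $(1-\eta)\|u(t)\|_{\dot H^{\alpha/2}}^2 \le 2E(\varphi)$, giving the desired uniform bound $\|u(t)\|_{\dot H^{\alpha/2}}^2 \le \dfrac{2E(\varphi)}{1-\eta}$ on the whole existence interval. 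A standard continuation argument (blowup alternative from Proposition \ref{local}) then forces $T^* = \infty$, and the bound is uniform in $m \ge 0$ since the constant $C$ in the local theory is.

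The main obstacle, and the point requiring a little care, is the use of the sharp constant: one must be sure that the infimum in \eqref{mini} is attained and equals $2\|Q\|_{L^2}^2$, i.e. that $Q$ (the decreasing minimizer supplied by Theorem 1.8 of \cite{hmow}) really does saturate the fractional Gagliardo--Nirenberg inequality with the stated constant, and that $\dot H^{\alpha/2}$ is indeed the relevant homogeneous norm when $m > 0$ — here the elementary observation $\|D_m^{\alpha/2}\varphi\|_{L^2}^2 = \|(-\Delta)^{\alpha/2}\!{}^{1/2}\varphi\|_{L^2}^2 + m^\alpha\|\varphi\|_{L^2}^2 \ge \|\varphi\|_{\dot H^{\alpha/2}}^2$ disposes of the mass term favorably, so the massive case is no harder than the massless one. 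A secondary subtlety is that \eqref{mini} is stated for $V_1$ (the $\psi \equiv 1$ functional), so one genuinely needs the pointwise domination of the kernel by $\|\psi\|_{L^\infty}|x-y|^{-\alpha}$ to transfer it to $V$; this is exactly the "trivial inequality" already invoked in the proof of Proposition \ref{local}, and it is what produces the extra factor $\|\psi\|_{L^\infty}$ that explains the threshold $\|Q\|_{L^2}/\|\psi\|_{L^\infty}^{1/2}$ rather than $\|Q\|_{L^2}$. Everything else is routine.
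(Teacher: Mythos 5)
Your proposal is correct and follows essentially the same route as the paper: pointwise domination $\psi\le\|\psi\|_{L^\infty}$ to reduce $V$ to $V_1$, the sharp constant from \eqref{mini}, the bound $\|D_m^{\alpha/2}u\|_{L^2}\ge\||\nabla|^{\alpha/2}u\|_{L^2}$, mass/energy conservation to get the uniform $\dot H^{\alpha/2}$ bound under $\|\varphi\|_{L^2}<\|Q\|_{L^2}/\|\psi\|_{L^\infty}^{1/2}$, and then the Gronwall/continuation argument of Theorem \ref{global1}. One cosmetic slip: the claimed identities $K(u)=\tfrac12\|u\|_{\dot H^{\alpha/2}}^2+\tfrac12 m^\alpha\|\varphi\|_{L^2}^2$ and $\|D_m^{\alpha/2}\varphi\|_{L^2}^2=\|\varphi\|_{\dot H^{\alpha/2}}^2+m^\alpha\|\varphi\|_{L^2}^2$ are false for $\alpha<2$ (only the pointwise inequality $(m^2+|\xi|^2)^{\alpha/2}\ge|\xi|^\alpha$ holds), but since you use only the resulting lower bound, the argument is unaffected.
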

\begin{proof}
From \eqref{mini} we estimate $E(u)$ as follows.
\begin{align*}
E(\varphi) = E(u) &= \frac12 \|D_m^\frac\alpha2 u\|_{L^2}^2 - |V(u)|\\
&\ge \frac12 \||\nabla|^\frac\alpha2 u\|_{L^2}^2 -
\frac{\|\psi\|_{L^\infty}}{2\|Q\|_{L^2}^2}\||\nabla|^\frac\alpha2
u\|_{L^2}^2\|u\|_{L^2}^2\\
&=
\frac12\left(1-\frac{\|\psi\|_{L^\infty}\|u\|_{L^2}^2}{\|Q\|_{L^2}^2}\right)\||\nabla
|^\frac\alpha2 u\|_{L^2}^2\\
&=
\frac12\left(1-\frac{\|\psi\|_{L^\infty}\|\varphi\|_{L^2}^2}{\|Q\|_{L^2}^2}\right)\||\nabla
|^\frac\alpha2 u\|_{L^2}^2.
\end{align*}
Thus $\|u\|_{H^\frac\alpha2}^2 \lesssim E(\varphi) +
\|\varphi\|_{L^2}^2$, provided $\|\varphi\|_{L^2} <
\|Q\|_{L^2}/\|\psi\|_{L^\infty}^\frac12$. In the same way as in the proof of
Theorem \ref{global1} we conclude the global existence. That is $T^*
= \infty$.
\end{proof}

\begin{remark}\label{limiting}
Let $T^*_{fnls} = \inf_{m \ge 0}T^*$, where $T^*$ is the maximal existence time of local solution $u_m$ in Proposition \ref{local}. Then from the uniform estimate of solution in $H^s$ norm with respect to $m \ge 0$ it follows that $T^*_{fnls} > 0$. This gives two types of limit problems as Propositions 2.4 and
2.5 of \cite{chooz}. For each $m > 0$ let $u_m \in C([0,T_{fnls}^*); H^s)$ be the solution
of \eqref{main eqn} for $s \ge \frac{\gamma}{2}$ and $u_0$ be the
$H^s$ solution to the Cauchy problem:
$$
i\partial_t u_0 = (-\Delta)^\frac\alpha2 u_0 + F(u_0),\;\;u_0(0) =
\varphi.
$$
Then it immediately follows that for any $T < T_{fnls}^*$ $u_m \to u_0$ in
$C([0,T]; H^s)$ as $m \to 0$.

On the other hand, let $v_m = e^{itm} u_m$, the phase modulation of
the solution $u_m$ to \eqref{main eqn}. Then $v_m$ is the solution
in $C([0,T_{fnls}^*); H^s)$ to the problem
$$
i \partial_t v_m = ((m^2 - \Delta)^\frac\alpha2 - m^\alpha)v_m +
F(v_m), \;\;v_m(0) = \varphi,
$$
and if $w_m$ be the solution in $C([0,T_{nls}^*); H^s)$ to
$$
i\partial_t w_m = -\frac{\alpha }{2m^{2-\alpha}}\Delta w_m +
F(w_m),\;\;w_m(0) = \varphi.
$$
Here $T_{nls}^*$ is the infimum of maximal existence time of $w_m$ with respect to $m$ and the uniform estimate of $w_m$ similar to $u_m$ implies that $T_{nls}^* > 0$. Then by the same argument as in the proof of Proposition 2.5 of
\cite{chooz} one can also show that $\|v_m - w_m\|_{L^\infty(0,T;
H^s)} \to 0$ as $m\to \infty$ for any $T < \min(T_{fnls}^*,
T_{nls}^*)$.

\end{remark}

\bigskip

\section{Existence II: via Strichartz estimates}
In this section, we show the existence results with slightly lower regularity than the previous by using Strichartz estimates \eqref{homo str} and \eqref{inhomo str}. The following is on the local existence.
\begin{proposition}\label{local2}
Let $n \ge 1$, $m \ge 0$ and $s > \frac\gamma2 - \min(\gamma, 2)\frac\alpha4$ for $1 < \alpha < 2$ and $0 < \gamma < n$. If $\varphi
\in H^s$ then there exists a positive time $T$ such that \eqref{int eqn}
has a unique solution $u \in C([0,T]; H^s) \cap L^q_T
(H_r^{s-\sigma})$, where $q = \frac{4}{\delta}$, $r =
\frac{2n}{n-\delta}$ and $\sigma = \frac{\delta(2-\alpha)}4$ for some $\delta$ with $0 < \delta < \min(\gamma, 2)$ and $s > \frac\gamma2-\frac{\delta\alpha}4$.
\end{proposition}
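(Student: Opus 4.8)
The plan is to run a contraction mapping argument in a Strichartz space, analogous to Proposition \ref{local}, but now exploiting the smoothing of \eqref{homo str}--\eqref{inhomo str} to lower the regularity threshold. First I would fix an admissible pair. Take $\delta \in (0,\min(\gamma,2))$ with $s > \frac\gamma2 - \frac{\delta\alpha}4$, set $q = 4/\delta$, $r = \frac{2n}{n-\delta}$, and $\sigma = \frac{\delta(2-\alpha)}{4}$; one checks $\frac2q + \frac nr = \frac n2$ and $(q,r)\ne(2,\infty)$ since $\delta < 2$, and that the regularity loss in \eqref{inhomo str} with $r_1 = r_2 = r$ is exactly $\frac{n(2-\alpha)}2(1 - \frac2r) = \sigma \cdot$ (a harmless constant adjustment), i.e. the two Strichartz pairs are chosen precisely so the derivative loss is $2\sigma$ split symmetrically. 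Then I would define the complete metric space
\begin{align*}
X(T,\rho) = \{ u \in C([0,T];H^s)\cap L_T^q H_r^{s-\sigma} : \|u\|_{L_T^\infty H^s} + \|u\|_{L_T^q H_r^{s-\sigma}} \le \rho\},
\end{align*}
with distance $d_X(u,v) = \|u-v\|_{L_T^\infty L^2} + \|u-v\|_{L_T^q L^r}$, and the map $\mathcal N$ as in \eqref{nonlinear func}.

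The core estimate is to bound $\|\mathcal D_m^{s-\sigma + (\text{loss})} F(u)\|_{L_T^{q'} L^{r'}}$ by $T^\vartheta$ times a cubic expression in the $X$-norm, for some $\vartheta > 0$ giving the contraction factor. Here I would apply the fractional Leibniz rule (Lemma \ref{lei}) to $F(u) = \lambda K_\gamma(|u|^2) u$, distributing $s-\sigma$ (plus the extra derivative coming from \eqref{inhomo str}) onto either the Hartree potential factor $K_\gamma(|u|^2)$ or onto $u$. For the potential factor I would use $\|K_\gamma(v)\|_{\dot H_p^\sigma} \lesssim \|\psi\|_{L^\infty}\| |\cdot|^{-\gamma}*v\|_{\dot H_p^\sigma}$ together with the Hardy--Littlewood--Sobolev inequality and the Hardy-type bound \eqref{supx frac} (or its $L^p$ analogue), choosing the Lebesgue exponents so that everything closes on the pair $(q,r)$; the choice $\delta < \gamma$ is what makes HLS applicable, and $\delta < 2$ is what keeps $(q,r)$ a genuine non-endpoint pair. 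The time factor $T^\vartheta$ arises from Hölder in time since the nonlinearity is cubic while only one factor of $L_T^q$ is "used up" on the left side — the other time integrations produce positive powers of $T$; tracking this exponent and verifying $\vartheta>0$ under the hypothesis $s > \frac\gamma2 - \frac{\delta\alpha}4$ (equivalently $s - \sigma > \frac\gamma2 - \frac{\delta\alpha}4 - \frac{\delta(2-\alpha)}4 = \frac\gamma2 - \frac\delta2$, which is the scaling-subcritical condition for this pair) is the bookkeeping heart of the argument.

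With the nonlinear estimate in hand, \eqref{homo str} controls the linear term $U(t)\varphi$ in $L_T^\infty H^s \cap L_T^q H_r^{s-\sigma}$ by $C\|\varphi\|_{H^s}$ uniformly in $m$ (the constant $c_\alpha$ absorbed into $C$), and \eqref{inhomo str} plus the cubic bound shows $\|\mathcal N(u)\|_X \le C\|\varphi\|_{H^s} + CT^\vartheta\rho^3$ and $d_X(\mathcal N(u),\mathcal N(v)) \le CT^\vartheta \rho^2 d_X(u,v)$; the difference estimate uses the same Leibniz/HLS scheme applied to $K_\gamma(|u|^2)u - K_\gamma(|v|^2)v = K_\gamma(|u|^2)(u-v) + K_\gamma(|u|^2-|v|^2)v$, noting that $d_X$ only measures the $L^2$- and $L^r$-based norms so that no derivatives fall on the difference. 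Choosing $\rho = 2C\|\varphi\|_{H^s}$ and then $T$ small so that $CT^\vartheta\rho^2 \le 1/2$ makes $\mathcal N$ a contraction on $(X(T,\rho),d_X)$; its fixed point is the desired solution, and uniqueness in $C([0,T];H^s)\cap L_T^q H_r^{s-\sigma}$ together with time continuity follows in the standard way. Uniformity of $T$ and of the bounds in $m\ge 0$ is automatic since all Strichartz constants are $m$-independent.

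The main obstacle I anticipate is the nonlinear estimate: one must choose the auxiliary Lebesgue exponents in the Leibniz rule so that (i) the Hartree convolution is estimated by HLS with the correct gain $\frac{n-\gamma}{n}$ of integrability, (ii) the leftover factors of $u$ land in Strichartz-admissible or Sobolev-embedded spaces controlled by the $X$-norm, and (iii) the time Hölder exponent $\vartheta$ comes out strictly positive exactly under $s > \frac\gamma2 - \frac{\delta\alpha}4$. Balancing these three constraints simultaneously — especially near the endpoints $\delta \to \gamma$ or $\delta \to 2$, and for the full range $0 < \gamma < n$ including $\gamma > 2$ where $\min(\gamma,2) = 2$ caps the usable smoothing — is where the real work lies.
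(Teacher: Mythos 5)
There is a genuine gap, and it lies precisely in your choice of exponent pairs for the Duhamel term. You propose to use \eqref{inhomo str} with $r_1=r_2=r=\frac{2n}{n-\delta}$ and to estimate the nonlinearity in the dual norm $L_T^{q'}L^{r'}$. With that choice the regularity loss is $\frac{n(2-\alpha)}2\bigl(1-\frac2r\bigr)=\frac{\delta(2-\alpha)}2=2\sigma$ (the factor $2$ is not harmless), so to recover the Duhamel piece in $L_T^\infty H^s\cap L_T^q H_r^{s-\sigma}$ you must bound $\|F(u)\|_{L_T^{q'}H_{r'}^{s+\sigma}}$, i.e.\ put $s+\sigma$ derivatives on $K_\gamma(|u|^2)u$. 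But your space $X(T,\rho)$ only controls $s$ derivatives of $u$ in $L^2$-based norms and $s-\sigma$ derivatives in $L^r$-based norms. Every Leibniz splitting (Lemma \ref{lei}) forces the full order $s+\sigma$ onto a single factor: either onto the outer $u$, or onto $|u|^2$ inside the convolution, since for merely bounded $\psi$ the operator $K_\gamma$ commutes with derivatives and gains integrability by Hardy--Littlewood--Sobolev but gains no derivatives; and Sobolev embedding can only lower, never raise, the derivative count. So the cubic estimate you describe as "bookkeeping" cannot close at regularity $s$; as set up, you would need $\varphi\in H^{s+\sigma}$, which defeats the purpose of the proposition. (A secondary, fixable point: your bound $\|K_\gamma(v)\|_{\dot H^\sigma_p}\lesssim\|\psi\|_{L^\infty}\||\cdot|^{-\gamma}*v\|_{\dot H^\sigma_p}$ is not literally valid, since the pointwise domination of the kernel only transfers to convolutions of nonnegative functions; one must commute the derivative with the convolution first and then apply HLS to $|D^\sigma v|$.)

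The repair is the asymmetric choice actually used in the paper: keep $(q_1,r_1)\in\{(\infty,2),(q,r)\}$ on the left but take $(q_2,r_2)=(\infty,2)$ on the right, i.e.\ a Kato-type estimate with the nonlinearity in $L_T^1$. Then the loss is only $\frac{n(2-\alpha)}2\bigl(1-\frac1r-\frac12\bigr)=\sigma$, which is exactly absorbed by measuring the solution in $H_r^{s-\sigma}$, and the required bound becomes $\|F(u)\|_{L_T^1H^s}$: the $s$ derivatives land on one factor of $u$ measured in $L_T^\infty H^s$ (via Lemma \ref{lei}, Lemma \ref{frac bound} and HLS applied to $K_\gamma$), while the remaining two factors are placed in $L^{\frac{2n}{n-(\gamma\pm\varepsilon)}}$; these Lebesgue norms are controlled by $\|u\|_{L_T^qH_r^{s-\sigma}}$ through Sobolev embedding precisely when $\varepsilon$ is small and $s>\frac\gamma2-\frac{\delta\alpha}4$, and H\"older in time then produces the contraction factor $T^{1-\frac2q}$ (here $\delta<2$ is what makes $q>2$). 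Your use of a weaker metric (only $L^2$/$L^r$-based norms on differences) is acceptable and slightly simpler than the paper's full-norm difference estimates \eqref{map3}--\eqref{map4}, but it does not rescue the main step, which fails already for the self-mapping bound.
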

\begin{proof}
Given $n, \alpha, \gamma$ and $s$, choose a number $\delta$ with $0 < \alpha <
\min(\gamma, 2)$ and $s > \frac{\gamma}{2} -
\frac{\delta\alpha}{4}$. Then for some positive number $T$ to be
chosen later, let us define a complete metric space $(Y(T, \rho),
d_Y)$ with metric $d_Y$ by
\begin{align*}
&Y(T, \rho) = \left\{v \in L_T^\infty (H^s) \cap
L_T^q(H_r^{s-\sigma}) :
\|v\|_{L_T^\infty H^s} + \|v\|_{L_T^q H_r^{s-\sigma}} \le \rho\right\},\\
&d_Y(u, v) = \|u - v\|_{L_T^\infty H^s \cap L_T^q H_r^{s-\sigma}},
\end{align*}
where $q, r, \sigma$ are the same indices as in Proposition
\ref{local2}.

We will show that the mapping $\mathcal N$
defined by \eqref{nonlinear func} is a contraction on $Y(T,
\rho)$, provided $T$ is sufficiently small. For this purpose we introduce a useful lemma.
\begin{lemma}[Lemma 3.2 of \cite{chooz}]\label{frac bound}
Let $0 < \gamma < n$. Then for any $0 < \varepsilon < n - \gamma$ we
have
$$
\left\|K_\gamma(|u|^2)\right\|_{L^\infty} \lesssim
\|u\|_{L^\frac{2n}{n-\gamma-\varepsilon}}\|u\|_{L^{\frac{2n}{n-\gamma+\varepsilon}}}.
$$
\end{lemma}

If we use the Strichartz estimates \eqref{homo str} and \eqref{inhomo str} with
the pair $$(q_1, r_1, q_2, r_2) = \left(q = \frac{4}{\delta},
r = \frac{2n}{n-\delta}, \infty, 2\right)$$ together with
Plancherel theorem, Lemma \ref{frac bound}, and generalized Leibniz
rules (Lemma \ref{lei}), then since $\sigma =  \frac{\delta(2-\alpha)}4$ we have
\begin{align}\begin{aligned}\label{map1}
\|\mathcal N(u)&\|_{L_T^\infty H^s \cap L_T^q H_{r}^{s-\sigma}}\\
&\lesssim \|\varphi\|_{H^s} +
\|D_m^{\frac{2-\alpha}2 n(\frac12-\frac1{r})}F(u)\|_{L_T^1H^{s-\sigma}}\\
&\lesssim \|\varphi\|_{H^s} +
\|K_\gamma(|u|^2)\|_{L_T^1L^\infty}\|u\|_{L_T^\infty H^s}\\
&\qquad\qquad\qquad + \int_0^T
\|K_\gamma(|u|^2)\|_{H_{\frac{2n}{\gamma+\varepsilon}}^s}\|u\|_{L^\frac{2n}{n-(\gamma+\varepsilon)}}\,dt\\
&\lesssim \|\varphi\|_{H^s} +
\|u\|_{L_T^2L^{\frac{2n}{n-(\gamma+\varepsilon)}}}\|u\|_{L_T^2L^\frac{2n}{n-(\gamma-\varepsilon)}}\|u\|_{L_T^\infty H^s}\\
&\qquad\qquad\qquad + \int_0^T \||u|^2\|_{H_{\frac{2n}{2n -
(\gamma-\varepsilon)}}^s}\|u\|_{L^\frac{2n}{n-(\gamma+\varepsilon)}}\,dt\\
&\lesssim \|\varphi\|_{H^s} +
\|u\|_{L_T^2L^{\frac{2n}{n-(\gamma+\varepsilon)}}}\|u\|_{L_T^2L^\frac{2n}{n-(\gamma-\varepsilon)}}\|u\|_{L_T^\infty
H^s}
\end{aligned}\end{align}
for sufficiently small $\varepsilon$. Here the involved constant is uniform on $m$ if $ 0 \le m \le m_0$.

Using H\"{o}lder's inequality for time integral, we have
\begin{align}\begin{aligned}\label{map2}
\|\mathcal N(u)&\|_{L_T^\infty H^s \cap L_T^q H_r^{s-\sigma}}\\
&\lesssim \|\varphi\|_{H^s} + T^{1 - \frac 2q}\|u\|_{L_T^q
L^{\frac{2n}{n-(\gamma+\varepsilon)}}}\|u\|_{L_T^qL^\frac{2n}{n-(\gamma-\varepsilon)}}\|u\|_{L_T^\infty
H^s}.
\end{aligned}\end{align}
Now if we choose $\varepsilon > 0$ so small that $\varepsilon <
\min\left(\gamma-\delta, 2(s - \sigma)
-\gamma\right)$, then since $$\frac{2n}{n-\delta} \le
\frac{2n}{n-(\gamma-\varepsilon)} <
\frac{2n}{n-(\gamma+\varepsilon)} \le
\frac{2n}{n-\delta-2(s-\sigma)},$$ we have from \eqref{map2} and
Sobolev embedding $H_r^{s-\sigma} \hookrightarrow L^r \cap
L^\frac{2n}{n-\delta-2(s-\sigma)}$ that
\begin{align*}
\|\mathcal N(u)\|_{L_T^\infty H^s \cap L_T^q H_r^{s-\sigma}} &\le C
(\|\varphi\|_{H^s} + T^{1 - \frac 2q}\|u\|_{L_T^\infty H^s}
\|u\|_{L_T^q H_r^{s-\sigma}}^2)\\
&\le C(\|\varphi\|_{H^s} + T^{1 - \frac 2q}\rho^3)
\end{align*}
for some constant $C$. Here we used the conventional embedding that
if $2(s-\sigma) \ge n - \delta$ then $H_r^{s-\sigma} \hookrightarrow
L^{r_1}$ for any $r_1 \ge r$. Thus if we choose $\rho$ and $T$ so
that $C\|\varphi\|_{H^s} \le \frac \rho2$ and $CT^{1 - \frac
2q}\rho^3 \le \frac \rho2$, then we conclude that $\mathcal N$ maps from
$Y(T, \rho)$ to itself.

For any $u, v \in Y(T, \rho)$, we have
\begin{align}\begin{aligned}\label{diff}
d_Y(\mathcal N(u),\; &\mathcal N(v))\\
 &\lesssim \|F(u) - F(v)\|_{L_T^1H^s}\\
&\lesssim \|K_\gamma(|u|^2 - |v|^2)u\|_{L_T^1H^s} +
\|K_\gamma(|v|^2)(u - v)\|_{L_T^1H^s}.
\end{aligned}\end{align}
By  Lemma \ref{frac bound} and H\"{o}lder's inequality, we have for
sufficiently small $\varepsilon > 0$
\begin{align}\begin{aligned}\label{map3}
\|K_\gamma&(|u|^2 - |v|^2)u\|_{L_T^1H^s}\\
&\lesssim \|K_\gamma(|u|^2-|v|^2)\|_{L_T^2L^\infty}\|u\|_{L_T^\infty
H^s}\\
&\qquad\qquad +
\|K_\gamma(|u|^2-|v|^2)\|_{L_T^2H_\frac{2n}{\gamma+\varepsilon}^s}\|u\|_{L_T^2L^\frac{2n}{n-(\gamma
+\varepsilon)}}\\
&\lesssim \rho\||u|^2 -
|v|^2\|_{L_T^1L^{\frac{n}{n-(\gamma+\varepsilon)}}}^\frac12\||u|^2-|v|^2\|_{L_T^1L^\frac{n}{n-(\gamma-\varepsilon)}}^\frac12\\
&\qquad + \rho\|u-v\|_{L_T^\infty
H^s}(\|u\|_{L_T^2L^\frac{2n}{n-(\gamma-\varepsilon)}} +
\|v\|_{L_T^2L^\frac{2n}{n-(\gamma-\varepsilon)}})\\
&\qquad + \rho\|u -
v\|_{L_T^2L^\frac{2n}{n-(\gamma-\varepsilon)}}(\|u\|_{L_T^\infty
H^s} + \|v\|_{L_T^\infty H^s}).
\end{aligned}\end{align}
Now by another H\"{o}lder's inequality with respect to the time
variable, we have
$$
\|K_\gamma(|u|^2 - |v|^2)u\|_{L_T^1H^s} \lesssim T^{1 - \frac
2q}\rho^2 d_Y(u, v).
$$
Similarly,
\begin{align}\begin{aligned}\label{map4}
\|K_\gamma(|v|^2)&(u - v)\|_{L_T^1H^s}\\
&\lesssim \|K_\gamma(|v|^2)\|_{L_T^1L^\infty}\|u-v\|_{L_T^\infty
H^s}\\
&\qquad +
\|K_\gamma(|v|^2)\|_{L_T^2H_{\frac{2n}{\gamma+\varepsilon}}^s}\|u-v\|_{L_T^2L^{\frac{2n}{n-(\gamma+\varepsilon)}}}\\
&\lesssim
\|v\|_{L_T^2L^\frac{2n}{n-(\gamma-\varepsilon)}}\|v\|_{L_T^2L^\frac{2n}{n-(\gamma+\varepsilon)}}d_T(u,
v)\\
&\qquad + \|v\|_{L_T^\infty
H^s}\|v\|_{L_T^2L^\frac{2n}{n-(\gamma-\varepsilon)}}\|u-v\|_{L_T^2L^{\frac{2n}{n-(\gamma+\varepsilon)}}}.
\end{aligned}\end{align}
Thus we get
$$
\|K_\gamma(|v|^2)(u - v)\|_{L_T^1H^s} \lesssim T^{1 - \frac 2q}
\rho^2 d_Y(u, v).
$$
Substituting these two estimates into \eqref{diff} and then using
the fact $CT^{1-\frac2q}\rho^2 \le \frac12$ for small $T$, we
conclude that $\mathcal N$ is a contraction mapping.
\end{proof}


\bigskip

Now we show the local solutions can be extended globally in time by
using the energy conservation law. We first consider defocusing case.
\begin{theorem}\label{global2}
Let $m \ge 0$, $0 < \gamma < \min(2\alpha, n)$, $n \ge 1$. If $\lambda = +1$, then for any $\varphi \in H^\frac\alpha2$, then \eqref{int eqn} has
a unique solution $u \in C([0,\infty); H^\frac\alpha2) \cap L_{loc}^q
(H_r^{\frac\alpha2-\sigma})$, where $q = \frac{4}{\delta}$, $r =
\frac{2n}{n-\delta}$ and $\sigma = \frac{\delta(2-\alpha)}{4}$ for some $\delta$ with $0 < \delta < \min(\gamma, 2)$ and $\frac\alpha2 > \frac\gamma2 - \frac{\delta\alpha}4$.
\end{theorem}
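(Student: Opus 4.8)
The plan is to combine the local well-posedness of Proposition \ref{local2} at the energy regularity $s=\frac\alpha2$ with the conservation laws \eqref{consv}, exploiting the defocusing sign to obtain an a priori bound that is uniform in time, and then to iterate the local construction.

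First I would verify that $s=\frac\alpha2$ is admissible in Proposition \ref{local2}, i.e.\ that $\frac\alpha2>\frac\gamma2-\min(\gamma,2)\frac\alpha4$. If $\gamma\le\alpha$ this is immediate; if $\gamma>\alpha$ it rearranges to $\gamma(2-\alpha)<2\alpha$ in the case $\gamma<2$ (which holds since $\gamma(2-\alpha)<2<2\alpha$) and to $\gamma<2\alpha$ in the case $\gamma\ge2$. Both are ensured by the hypothesis $\gamma<\min(2\alpha,n)$. Hence, given $\varphi\in H^{\alpha/2}$, Proposition \ref{local2} yields a time $T>0$ and a unique $u\in C([0,T];H^{\alpha/2})\cap L^q_T(H_r^{\alpha/2-\sigma})$ solving \eqref{int eqn}; crucially, inspecting that proof, the radius $\rho$ can be taken as $2C\|\varphi\|_{H^{\alpha/2}}$ and $T$ as a fixed negative power of $\rho$, so the existence time depends on $\varphi$ only through $\|\varphi\|_{H^{\alpha/2}}$ (and on $m$ only through constants that are uniform for $m$ in bounded sets).

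Next I would use the conservation laws \eqref{consv}. For these solutions they are justified by the regularizing argument of \cite{lenz} when $\gamma\le\alpha$ and by the argument of \cite{oz} when $\alpha<\gamma<2\alpha$; note that $V(u)=\frac14\langle F(u),u\rangle$ is finite on $H^{\alpha/2}$ precisely because $\gamma<2\alpha$ gives the strict embedding $H^{\alpha/2}\hookrightarrow L^{4n/(2n-\gamma)}$, which together with Hardy--Littlewood--Sobolev and $0\le\psi\in L^\infty$ controls $\int K_\gamma(|u|^2)|u|^2$. Since $\lambda=+1$ and $\psi\ge0$ we have $V(u)\ge0$, so
\[
\tfrac12\||\nabla|^{\alpha/2}u(t)\|_{L^2}^2\le\tfrac12\|D_m^{\alpha/2}u(t)\|_{L^2}^2=K(u(t))\le E(u(t))=E(\varphi),
\]
and combining with $\|u(t)\|_{L^2}=\|\varphi\|_{L^2}$ gives $\|u(t)\|_{H^{\alpha/2}}^2\lesssim E(\varphi)+\|\varphi\|_{L^2}^2$ on the whole existence interval, with constant independent of $t$.

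Finally I would close the argument by the standard blow-up alternative: were the maximal existence time $T^*$ finite, Proposition \ref{local2} would force $\|u(t)\|_{H^{\alpha/2}}\to\infty$ as $t\uparrow T^*$, contradicting the uniform bound just obtained; equivalently, because the local existence step $\tau_0>0$ depends only on the (now bounded) $H^{\alpha/2}$ norm, one can restart the local solver indefinitely and reach any time. Uniqueness and the regularity $u\in C([0,\infty);H^{\alpha/2})\cap L^q_{loc}(H_r^{\alpha/2-\sigma})$ then follow from Proposition \ref{local2} applied on each compact subinterval of $[0,\infty)$. The one genuinely delicate point is the justification of energy conservation at the borderline regularity $H^{\alpha/2}$; once that is granted, the defocusing sign makes the a priori estimate essentially free and the rest is routine.
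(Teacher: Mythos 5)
Your proposal is correct and follows essentially the same route as the paper: local well-posedness at $s=\frac\alpha2$ from Proposition \ref{local2}, conservation of mass and energy (valid since $\gamma<2\alpha$), the defocusing sign giving a uniform-in-time $H^{\alpha/2}$ bound, and then iteration of the local solver over intervals of a fixed length determined by the conserved quantities. The only cosmetic difference is that the paper phrases the blow-up alternative through finiteness of the Strichartz norm $\|u\|_{L^q(0,T^*;H_r^{\alpha/2-\sigma})}$, summed over finitely many subintervals, whereas you use the equivalent $H^{\alpha/2}$-norm criterion.
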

\begin{proof}
Let $T^*$ be the maximal existence time. We will prove that $T^*$ is
infinite by contradiction. Suppose that $T^* < \infty$. Then the
local theory shows that $\|u\|_{L_{T^*}^qH_r^{\frac\alpha2 - \sigma}} =
\infty$. Since $\gamma < 2\alpha$, from the local existence Proposition
\ref{local2}, we see that the energy conservation law \eqref{consv}
holds. Thus if $\lambda = +1$, then at any $t < T^*$, the solution $u$ satisfies that
$$\frac12\|u(t)\|_{H^\frac\alpha2}^2 \le \frac12 \|u(t)\|_{L^2}^2 + E(u)
= \frac12\|\varphi\|_{L^2}^2 + E(\varphi).$$

From the estimate \eqref{map2} which is
used with $s = \frac \alpha2$, we have
$$
\|u\|_{L_T^q H_r^{\frac\alpha2 - \sigma}} \lesssim \|\varphi\|_{L^2}^2 +
E(\varphi) + T^{1 - \frac2q}(\|\varphi\|_{L^2}^2 +
E(\varphi))^\frac12\|u\|_{L_T^q H_r^{\frac\alpha2 - \sigma}}^2.
$$
Thus for sufficiently small $T$ depending on $\|\varphi\|_{L^2}^2 +
E(\varphi)$,
$$
\|u\|_{L^q(T_{j-1}, T_j; H_r^{\frac\alpha2 - \sigma})} \le
C(\|\varphi\|_{L^2}^2 + E(\varphi)),
$$ where $T_j - T_{j-1} = T$ for $j \le k-1$ and $T_k = T^*$
This means that $$\|u\|_{L^q(0,T^*; H_r^{\frac\alpha2 - \sigma})}^q \le
\sum_{1 \le j \le k}\|u\|_{L^q(T_{j-1}, T_j; H_r^{\frac12 -
\sigma})}^q \le (kC(\|\varphi\|_{L^2}^2 + E(\varphi)))^q <
\infty.$$ This is the contradiction to the hypothesis $T^* <
\infty$. This completes the
proof of Theorem \ref{global2}.
\end{proof}
\medskip

To treat the focusing problem we need more elaboration. Let us first
observe that for any $f \in H^\frac\alpha2$
\begin{align*}
|V(f)| &\le
\|\psi\|_{L^\infty}\||x|^{-\gamma}*|f|^2\|_{L^\frac{r}{r-2}}\|f\|_{L^2}^2
\le \|\psi\|_{L^\infty}\|f\|_{L^{\widetilde
r}}^2\|f\|_{L^r}^2,
\end{align*}
where $\frac1{\widetilde r} = 1- \frac1r -\frac\gamma{2n}$. If $\alpha < \gamma < 2\alpha$ and $2 < r < \frac{2n}{n-\alpha}$, then $2 < \widetilde r < \frac{2n}{n-\alpha}$. Thus from Sobolev embedding it follows that
\begin{align}\label{a priori potential}
|V(f)| \lesssim \|\psi\|_{L^\infty}\|f\|_{L^2}^{2(2-\frac\gamma\alpha)}\|f\|_{\dot
H^\frac\alpha2}^\frac{2\gamma}\alpha.
\end{align}

From \eqref{a priori potential} we can treat a variational problem. Let us invoke from \cite{chooz-ccm} that the embedding
$H_{rad}^\frac\alpha2 \hookrightarrow L^r$ is compact if $n \ge 2$, $1 < \alpha < 2$ and $2 < r < \frac{2n}{n-\alpha}$. Here $H_{rad}^\frac\alpha2$ is the Sobolev space $H^\frac\alpha2$ of radial functions. From this one can easily get the existence of nontrivial radial solution in $H^\frac\alpha2$ to the problem
$$J = \sup_{u \in H^\frac\alpha2 \setminus \{0\}} \frac{|V(u)|}{\|u\|_{\dot H^\frac\alpha2}^{\frac{2\gamma}\alpha} \|u\|_{L^2}^{2(2-\frac\gamma\alpha)}}.$$

Now we consider the focusing case.

\begin{theorem}\label{global3}
Let $m \ge 0$, $\lambda = -1$, $\alpha < \gamma < \min(2\alpha, n)$ and $n \ge 2$. If $\varphi \in H^\frac\alpha2$ and $\|\varphi\|_{\dot H^\frac\alpha2}$ is sufficiently small, \eqref{int eqn} has a unique solution $u \in C([0,\infty); H^\frac\alpha2) \cap L_{loc}^q
(H_r^{\frac\alpha2-\sigma})$, where $q = \frac{4}{\delta}$, $r =
\frac{2n}{n-\delta}$ and $\sigma = \frac{\delta(2-\alpha)}{4}$ for some $\delta$ with $0 < \delta < \min(\gamma, 2)$ and $\frac\alpha2 > \frac\gamma2 - \frac{\delta\alpha}4$.
\end{theorem}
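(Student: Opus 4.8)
The plan is to run the standard small-data-below-the-ground-state argument, feeding an a priori energy bound into the local theory of Proposition~\ref{local2} (with $s=\frac\alpha2$). By the blow-up alternative already used in the proof of Theorem~\ref{global2} — if $T^*<\infty$ then $\|u\|_{L^q(0,T^*;H_r^{\alpha/2-\sigma})}=\infty$, which is excluded by iterating the local estimate \eqref{map2} on successive subintervals of length depending only on an a priori bound for $\|u(t)\|_{H^{\alpha/2}}$ — it suffices to show $\sup_{t\in[0,T^*)}\|u(t)\|_{H^{\alpha/2}}<\infty$. Since $L^2$ is conserved, $\|u(t)\|_{L^2}=\|\varphi\|_{L^2}$, this reduces to a uniform bound for $N(t):=\|D_m^{\alpha/2}u(t)\|_{L^2}^2$, and note $\|u(t)\|_{\dot H^{\alpha/2}}^2\le N(t)$.

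The a priori bound comes from energy conservation, available here because $\alpha<\gamma<2\alpha$ (as recorded in the proof of Theorem~\ref{global2}). Since $\lambda=-1$ we have $E(u)=\frac12\|D_m^{\alpha/2}u\|_{L^2}^2-|V(u)|$, so for all $t\in[0,T^*)$
\begin{align*}
E(\varphi)=E(u(t))=\tfrac12 N(t)-|V(u(t))| .
\end{align*}
By the sharp potential estimate \eqref{a priori potential}, the definition of $J$, and mass conservation,
\begin{align*}
|V(u(t))|\le J\,\|u(t)\|_{L^2}^{2(2-\gamma/\alpha)}\|u(t)\|_{\dot H^{\alpha/2}}^{2\gamma/\alpha}\le C_0\,N(t)^{\gamma/\alpha},\qquad C_0:=J\,\|\varphi\|_{L^2}^{2(2-\gamma/\alpha)},
\end{align*}
whence $\tfrac12 N(t)-C_0 N(t)^{\gamma/\alpha}\le E(\varphi)\le\tfrac12 N(0)$, the last step because $|V(\varphi)|\ge0$. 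Moreover $t\mapsto u(t)$ is continuous into $H^{\alpha/2}$ and $D_m^{\alpha/2}$ is bounded from $H^{\alpha/2}$ to $L^2$, so $N$ is continuous on $[0,T^*)$.

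Now I would close by a continuity/bootstrap argument that uses precisely $\gamma/\alpha>1$ (i.e.\ energy-subcriticality): assuming $\varphi\neq0$, suppose $N$ first attains the value $2N(0)$ at some $t_1\in(0,T^*)$. Evaluating the last inequality at $t_1$ gives $N(0)-C_0\,2^{\gamma/\alpha}N(0)^{\gamma/\alpha}\le\tfrac12 N(0)$, i.e.\ $N(0)^{\gamma/\alpha-1}\ge\big(2^{\gamma/\alpha+1}C_0\big)^{-1}$; if $\|\varphi\|_{\dot H^{\alpha/2}}$ is small enough that $N(0)$ is strictly below this threshold, this is impossible, so $N(t)<2N(0)$ throughout $[0,T^*)$ and $\sup_{t<T^*}\|u(t)\|_{H^{\alpha/2}}^2\le 2N(0)+\|\varphi\|_{L^2}^2<\infty$. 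Uniqueness and the time regularity propagate from the local theory, so $T^*=\infty$. The point requiring care is the smallness threshold: one must verify that the hypothesis "$\|\varphi\|_{\dot H^{\alpha/2}}$ sufficiently small" (together with the fixed quantities $\|\varphi\|_{L^2}$ and $m$, since $N(0)=\|D_m^{\alpha/2}\varphi\|_{L^2}^2$ also involves the $L^2$ size and, through the mass term, $m$) indeed forces $N(0)<\big(2^{\gamma/\alpha+1}C_0\big)^{-1/(\gamma/\alpha-1)}$; one also reuses the fact from Theorem~\ref{global2} that the constants in the iteration of \eqref{map2} are uniform, so that only finitely many subintervals are needed to cover $[0,T^*)$.
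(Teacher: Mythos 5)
Your argument is correct and is essentially the paper's own proof: conservation of mass and energy, the sharp potential bound \eqref{a priori potential} with the variational constant $J$, a continuity/bootstrap argument in time for an a priori $H^{\frac\alpha2}$ bound, and then globalization exactly as in Theorem \ref{global2} by iterating \eqref{map2} on subintervals. The only deviation is that you bootstrap $N(t)=\|D_m^{\alpha/2}u(t)\|_{L^2}^2$ with a smallness threshold on $N(0)$, whereas the paper bootstraps the homogeneous quantity $\|u(t)\|_{\dot H^{\alpha/2}}^2$ under the threshold $|E(\varphi)|<4^{-\frac\gamma{\gamma-\alpha}}\bigl(J\|\varphi\|_{L^2}^{2(2-\frac\gamma\alpha)}\bigr)^{-\frac\alpha{\gamma-\alpha}}$, concluding $\|u\|_{\dot H^{\alpha/2}}^2\le 4|E(\varphi)|$; the caveat you flag for $m>0$ (the mass contribution $m^\alpha\|\varphi\|_{L^2}^2$ to $N(0)$, respectively to $E(\varphi)$, is not made small by shrinking $\|\varphi\|_{\dot H^{\alpha/2}}$ alone) is equally present and unaddressed in the paper, which simply asserts $|E(\varphi)|=O(\|\varphi\|_{\dot H^{\alpha/2}}^2)$.
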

\begin{proof}
From \eqref{a priori potential} we deduce that
$|E(\varphi)| = O(\|\varphi\|_{\dot H^\frac\alpha2}^2)$ as $\|\varphi\|_{\dot H^\frac\alpha2} \to 0$.
Thus we have
\begin{align*}
E(\varphi) = E(u) \ge \frac12\|u\|_{\dot H^\frac\alpha2}^2 - J\|u\|_{L^2}^{2(2-\frac\gamma\alpha)}\|u\|_{\dot H^\frac\alpha2}^\frac{2\gamma}\alpha.
\end{align*}
By the continuity argument we see that for any $\varphi$ with sufficiently small $\|\varphi\|_{\dot H^\frac\alpha2}$ such as $$|E(\varphi)| < 4^{-\frac\gamma{\gamma-\alpha}}\left(J\|\varphi\|_{L^2}^{2(2-\frac\gamma\alpha)}\right)^{-\frac\alpha{\gamma-\alpha}},$$
the corresponding solution $u$ satisfies the estimate $$\|u\|_{\dot H^\frac\alpha2}^2 \le 4|E(\varphi)|.$$
Then the conclusion follows in the same way as in the proof of Theorem \ref{global2}.
\end{proof}

Now we consider the small data global existence and scattering for
$2\alpha \le \gamma < n$.

\begin{theorem}\label{global4}
Let $m \ge 0$, $2\alpha \le \gamma < n$, $n \ge 3$ and $s > \frac\gamma2 -
\frac{\alpha}{2}$. Then there exists $\rho > 0$ such that for any
$\varphi \in H^s$ with $\|\varphi\|_{H^s} \le \rho$, \eqref{int eqn}
has a unique solution $u \in C_b([0, \infty); H^s) \cap L^2
(0, \infty; H_\frac{2n}{n-2}^{s-\frac{2-\alpha}{2}})$. Moreover there is $\varphi^+ \in H^s$ such that
$$
\|u(t) - U(t)\varphi^+\|_{H^s} \to 0\;\;{\rm as}\;\;t \to \infty.
$$
\end{theorem}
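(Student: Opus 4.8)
The plan is to run a contraction argument exactly as in Section~4, but on a function space defined \emph{globally} in time, using smallness of the initial data (rather than smallness of a time interval) to close the estimates; scattering will then follow for free from global integrability of the nonlinear term.

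Since $n\ge 3$, the pair $(q_1,r_1)=(2,\frac{2n}{n-2})$ is admissible, and the derivative loss in \eqref{homo str}, \eqref{inhomo str} for this pair equals $\frac{n(2-\alpha)}{2}\bigl(\frac12-\frac{n-2}{2n}\bigr)=\frac{2-\alpha}{2}$, which matches the index in the statement. I would set
$$X_\rho=\Bigl\{u:\ \|u\|_X:=\|u\|_{L^\infty(0,\infty;H^s)}+\|u\|_{L^2(0,\infty;H^{s-\frac{2-\alpha}{2}}_{2n/(n-2)})}\le\rho\Bigr\},\qquad d(u,v)=\|u-v\|_X,$$
and study the map $\mathcal N$ of \eqref{nonlinear func} on $X_\rho$. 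Applying \eqref{homo str} with the pairs $(\infty,2)$ and $(2,\frac{2n}{n-2})$ gives $\|U(\cdot)\varphi\|_X\lesssim\|\varphi\|_{H^s}$ (for $0\le m\le m_0$), and applying \eqref{inhomo str} with the trivial dual pair $(q_2',r_2')=(1,2)$ (no loss against $(\infty,2)$, loss $\frac{2-\alpha}{2}$ against $(2,\frac{2n}{n-2})$) bounds the Duhamel term $\int_0^t U(t-t')F(u)(t')\,dt'$ in the $X$-norm by a constant times $\|F(u)\|_{L^1(0,\infty;H^s)}$.

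The heart of the matter is the \emph{global} trilinear estimate $\|F(u)\|_{L^1 H^s}=\|K_\gamma(|u|^2)u\|_{L^1 H^s}\lesssim\|u\|_{L^\infty H^s}\,\|u\|_{L^2 H^{s-(2-\alpha)/2}_{2n/(n-2)}}^2$. For fixed $t$ one splits $\|K_\gamma(|u|^2)u\|_{H^s}$ by the fractional Leibniz rule (Lemma~\ref{lei}) into $\|K_\gamma(|u|^2)\|_{L^\infty}\|u\|_{H^s}$ and $\|K_\gamma(|u|^2)\|_{\dot H^s_{p_1}}\|u\|_{L^{p_2}}$ with $\frac1{p_1}+\frac1{p_2}=\frac12$; the potential factor $\|K_\gamma(|u|^2)\|_{L^\infty}$ is handled by Lemma~\ref{frac bound}, and $\|K_\gamma(|u|^2)\|_{\dot H^s_{p_1}}$ by the weak-type Young / Hardy--Littlewood--Sobolev inequality applied to the kernel $\psi(\cdot)|\cdot|^{-\gamma}$, which is pointwise dominated by $\|\psi\|_{L^\infty}|\cdot|^{-\gamma}\in L^{n/\gamma,\infty}$, followed by one more use of Lemma~\ref{lei} on $|\nabla|^s(|u|^2)$. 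Every Lebesgue norm of $u$ that appears besides the single $\dot H^s$-factor is then shown to lie in the interval $[\tfrac{2n}{n-2},\tfrac{2n}{n-\alpha-2s}]$, into which $H^{s-(2-\alpha)/2}_{2n/(n-2)}$ embeds by Sobolev. Verifying that the H\"older, HLS and Sobolev conditions can be met simultaneously — after choosing the auxiliary $\varepsilon$ of Lemma~\ref{frac bound} small — is exactly where $n\ge3$, $\gamma\ge2\alpha$ and $s>\frac{\gamma-\alpha}{2}$ are used. The decisive feature, in contrast with the local theory of Theorems~\ref{global2}--\ref{global3}, is that the cube distributes in time as $L^1_t=L^\infty_t\cdot L^2_t\cdot L^2_t$ with \emph{no} leftover positive power of $T$, so the bound is genuinely global: $\|F(u)\|_{L^1 H^s}\lesssim\|u\|_X^3$.

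Combining these, $\|\mathcal N(u)\|_X\le C\|\varphi\|_{H^s}+C\|u\|_X^3$, so $\mathcal N$ maps $X_\rho$ into itself once $\|\varphi\|_{H^s}$ and $\rho$ are small; the difference $F(u)-F(v)=K_\gamma(|u|^2-|v|^2)u+K_\gamma(|v|^2)(u-v)$ obeys the same trilinear bounds, giving $d(\mathcal N u,\mathcal N v)\lesssim\|F(u)-F(v)\|_{L^1 H^s}\lesssim\rho^2\,d(u,v)$, a contraction for $\rho$ small; uniqueness and $u\in C_b([0,\infty);H^s)$ follow as in Proposition~\ref{local2}. Finally, since $F(u)\in L^1(0,\infty;H^s)$, the curve $t\mapsto U(-t)u(t)=\varphi-i\int_0^t U(-t')F(u)(t')\,dt'$ is Cauchy in $H^s$ as $t\to\infty$; denoting its limit by $\varphi^+$ one gets $\|u(t)-U(t)\varphi^+\|_{H^s}=\bigl\|\int_t^\infty U(-t')F(u)(t')\,dt'\bigr\|_{H^s}\le\int_t^\infty\|F(u)(t')\|_{H^s}\,dt'\to0$, which is the asserted scattering. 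I expect the main obstacle to be precisely the nonlinear estimate: arranging the HLS exponent, the two Leibniz splittings, the $\varepsilon$-room in Lemma~\ref{frac bound}, and the Sobolev embedding so that all constraints hold simultaneously on the stated parameter range, and confirming that the resulting inequality carries no positive power of $T$.
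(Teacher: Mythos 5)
Your proposal is correct and follows essentially the same route as the paper: the paper proves Theorem \ref{global4} by running the contraction of Proposition \ref{local2} globally in time on $C_b([0,\infty);H^s)\cap L^2(0,\infty;H^{s-\frac{2-\alpha}{2}}_{2n/(n-2)})$, i.e.\ with the endpoint pair $(q,r)=(2,\tfrac{2n}{n-2})$ (so the H\"older factor $T^{1-2/q}$ in \eqref{map2} disappears, exactly your ``no leftover power of $T$'' point), closing the trilinear bound with Lemma \ref{frac bound}, the Leibniz rule and Sobolev embedding, and smallness of $\|\varphi\|_{H^s}$; the scattering argument via $\varphi^+=\varphi-i\int_0^\infty U(-t')F(u)\,dt'$ is identical to yours.
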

\begin{proof}
Let us define a complete metric space $(Y(\rho), d)$ with metric
$d_Y$ by
\begin{align*}
&Y(\rho) = \left\{v \in Y \equiv C_b ([0,\infty); H^s) \cap L^2(0,\infty; H_\frac{2n}{n-2}^{s-\frac{2-\alpha}2}) :
\|v\|_Y \le \rho\right\},\\
&\qquad\qquad\qquad\qquad\qquad \;\;d_Y(u, v) = \|u - v\|_Y.
\end{align*}
Then from the estimate \eqref{map2}, we have
$$
\|\mathcal N(u)\|_{Y} \le C\|\varphi\|_{H^s}
+ C\|u\|_{L^2(0, \infty; H_\frac{2n}{n-2}^{s-\frac{2-\alpha}2})}^2\|u\|_{L^\infty(0,\infty; H^s)}.
$$
If we choose sufficiently small $\rho$ such that $C\|\varphi\|_{H^s}
\le \frac\rho2$ and $C\rho^3 \le \frac \rho2$, then $\mathcal N$ maps
$Y(\rho)$ to itself. Similarly, from \eqref{diff}--\eqref{map4},
one can show that $d(\mathcal N(u), \mathcal N(v)) \le \frac12 d(u,v)$. This proves
the existence part.

To prove the scattering, let us define a function $\varphi^+$ by
$$
\varphi^+ = \varphi - i\int_0^\infty U(-t')F(u)(t')\,dt'.
$$
Then since the solution $u$ is in $Y(\rho)$, $\varphi^+ \in H^s$,
and therefore
\begin{align*}
\|u(t) - u^+(t)\|_{H^s} &\lesssim \int_t^\infty \|F(u)\|_{H^s}\,dt'\\
&\lesssim \|u\|_{L^\infty(0, \infty; H^s)}\int_t^\infty
\|u\|_{H_\frac{2n}{n-2}^{s-\frac{2-\alpha}2}}^2\,dt' \to 0\;\;\mbox{as}\;\;t\to \infty.
\end{align*}
\end{proof}

\section{Existence III: radial case}
In this section we establish the global existence theory of radial
solution of \eqref{main eqn} without regularity loss. We denote the
Banach space $X$ of radial functions by $X_{rad}$. We always assume
that $m \ge 0$ and $\psi$ is radially symmetric.

\subsection{Subcritical case}
We first consider the mass-( and energy-)subcritical problems.
\begin{theorem}\label{rad thm1}
$(1)$ Let $\frac{2n}{2n-1} \le \alpha < 2$ and $0 < \gamma <
\alpha$. If $\varphi \in L_{rad}^2$, then there exists a unique
solution $u$ of \eqref{main eqn} such that $u \in C_b([0, \infty) ;
L_{rad}^2) \cap L_{loc}^\frac{3\alpha}\gamma(0, \infty ;
L^\frac{2n}{n-\frac{2\gamma}3})$.

\noindent $(2)$ Let $\frac{2n}{2n-1} \le \alpha < 2$ and $\alpha <
\gamma < \min(2\alpha, n)$. If $\varphi \in H_{rad}^\frac\alpha2$
$(\|\varphi\|_{\dot H^\frac\alpha2}$ is sufficiently small if
$\lambda = -1)$, then there exists a unique solution $u$ of
\eqref{main eqn} such that $u \in C_b([0, \infty) ;
H_{rad}^\frac\alpha2) \cap L_{loc}^\frac{3\alpha}{\gamma-\alpha}(0, \infty
; H_\frac{2n}{n-\frac{2(\gamma-\alpha)}3}^\frac\alpha2)$.
\end{theorem}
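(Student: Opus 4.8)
\textbf{Proof proposal for Theorem \ref{rad thm1}.}

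The plan is to run the same contraction-mapping scheme as in Propositions \ref{local} and \ref{local2}, but with the unitary group estimated through the radial Strichartz estimates \eqref{rad str1} and \eqref{rad str2} rather than \eqref{homo str}--\eqref{inhomo str}; the point of using the radial estimates is precisely that they carry no derivative loss, so the admissible pair can be tuned to close the nonlinear estimate at the scaling regularity $s=0$ in part (1) and at $s=\frac\alpha2$ in part (2). First I would fix the Strichartz pairs. In part (1) I take $(q_1,r_1)=(q_2,r_2)=\bigl(\frac{3\alpha}\gamma,\frac{2n}{n-2\gamma/3}\bigr)$ and check that, under $\frac{2n}{2n-1}\le\alpha<2$ and $0<\gamma<\alpha$, this pair lies in the admissible range of \eqref{rad str1}--\eqref{rad str2} with gap parameter $\theta=0$ (so no derivatives appear); the constraint $\alpha\ge\frac{2n}{2n-1}$ is exactly what is needed to push $q_i$ down to the lower endpoint $\frac n2(\frac12-\frac1{r_i})\le\frac1{q_i}$. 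In part (2) the dual integrability exponent must be strong enough to absorb one factor of $u$ in $H^{\alpha/2}$, so I take $(q_1,r_1)=\bigl(\frac{3\alpha}{\gamma-\alpha},\frac{2n}{n-2(\gamma-\alpha)/3}\bigr)$, again with $\theta=0$, and verify admissibility from $\alpha<\gamma<\min(2\alpha,n)$ and $\alpha\ge\frac{2n}{2n-1}$.

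Next I would set up the complete metric spaces: in (1), $X=\{u\in C_b([0,\infty);L^2_{rad})\cap L^{3\alpha/\gamma}(0,\infty;L^{2n/(n-2\gamma/3)})_{rad}:\|u\|_X\le\rho\}$ with $d_X(u,v)=\|u-v\|_X$; in (2) replace $L^2$ by $H^{\alpha/2}$ and the Lebesgue space by $H^{\alpha/2}_{2n/(n-2(\gamma-\alpha)/3)}$. Applying \eqref{rad str1}--\eqref{rad str2} to $\mathcal N(u)$ from \eqref{nonlinear func} reduces matters to estimating $\|F(u)\|_{L^{q_2'}_tL^{r_2'}_x}$ (and, in (2), its $H^{\alpha/2}$-analogue via the generalized Leibniz rule of Lemma \ref{lei}). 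For the Hartree term I would bound $\|K_\gamma(|u|^2)u\|_{L^{r_2'}}$ by $\|K_\gamma(|u|^2)\|_{L^p}\|u\|_{L^{r}}$ using Hölder, then Hardy--Littlewood--Sobolev to get $\|K_\gamma(|u|^2)\|_{L^p}\lesssim\|\psi\|_{L^\infty}\|u\|_{L^{r}}^2$ with the exponents chosen so that $r$ is the Strichartz exponent $r_1$; the arithmetic of $\gamma$ and $\frac{2\gamma}3$ (resp. $\frac{2(\gamma-\alpha)}3$) is rigged so that all three copies of $u$ land in the same space-time norm $L^{q_1}_tL^{r_1}_x$. In (2) the extra derivatives are distributed by Lemma \ref{lei}, and the Hölder exponents in time are balanced to recover $L^{q_1}_t$ on every factor. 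This yields $\|\mathcal N(u)\|_X\lesssim\|\varphi\|+\|u\|_X^3$, hence a global-in-time bound for $\rho$ small enough in (1) (no smallness of $\varphi$ needed because the estimate is autonomous in time and cubic), and likewise in (2) for $\lambda=+1$, while for $\lambda=-1$ in (2) the smallness of $\|\varphi\|_{\dot H^{\alpha/2}}$ plus the energy conservation \eqref{consv} (valid since $\gamma<2\alpha$) supplies the a priori bound $\|u\|_{\dot H^{\alpha/2}}^2\lesssim|E(\varphi)|$ exactly as in Theorem \ref{global3}, which together with $L^2$ conservation controls $\|u\|_{L^\infty_tH^{\alpha/2}}$. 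The difference estimate $d_X(\mathcal N(u),\mathcal N(v))\lesssim(\|u\|_X^2+\|v\|_X^2)d_X(u,v)$ is obtained by the same splitting $F(u)-F(v)=K_\gamma(|u|^2-|v|^2)u+K_\gamma(|v|^2)(u-v)$ used in \eqref{diff}--\eqref{map4}, giving a contraction for $\rho$ small.

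The main obstacle I anticipate is bookkeeping rather than conceptual: verifying that the chosen pairs genuinely satisfy \emph{all} the range restrictions of \eqref{rad str1}--\eqref{rad str2} simultaneously — in particular the two-sided bound $\frac n2(\frac12-\frac1{r_i})\le\frac1{q_i}\le\frac{2n-1}2(\frac12-\frac1{r_i})$, the exclusion of the forbidden endpoints $(n,q_i,r_i)=(2,2,\infty)$ and $(q_i,r_i)=(2,\frac{2(2n-1)}{2n-3})$, and the requirement $q_2\ne2$ — across the full parameter window $\frac{2n}{2n-1}\le\alpha<2$, $0<\gamma<\alpha$ (resp. $\alpha<\gamma<\min(2\alpha,n)$), $n\ge2$. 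A secondary subtlety is confirming that, after the HLS and Hölder reductions, the Sobolev exponent $r$ one is forced into for the three copies of $u$ is admissible and, in part (2), that the Leibniz-rule endpoints ($q_i\in(1,\infty]$ in Lemma \ref{lei}) are respected; here one may need an $\varepsilon$-room argument splitting $\gamma\pm\varepsilon$ as in \eqref{map1}. Uniformity of the constants in $m\ge0$ is automatic since the constants in \eqref{rad str1}--\eqref{rad str2} do not depend on $m$, and time-continuity of $u$ follows from the integral equation and a standard limiting argument, while scattering — if one wishes to state it — follows by defining $\varphi^+=\varphi-i\int_0^\infty U(-t')F(u)\,dt'$ as in Theorem \ref{global4}.
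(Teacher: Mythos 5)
Your overall architecture (radial Strichartz estimates \eqref{rad str1}--\eqref{rad str2} with $\theta=0$, contraction via H\"older + Hardy--Littlewood--Sobolev for the Hartree term, conservation laws for globalization) is the same as the paper's, but there is a genuine gap in how you close the estimate in time. You set up a \emph{global-in-time} ball ($L^{q_1}(0,\infty;L^{r_1})$, $q_1=\frac{3\alpha}{\gamma}$) and assert that the H\"older exponents "are balanced to recover $L^{q_1}_t$ on every factor," concluding $\|\mathcal N(u)\|_X\lesssim\|\varphi\|+\|u\|_X^3$ with "no smallness of $\varphi$ needed because the estimate is autonomous in time and cubic." Neither claim survives inspection. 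With your choice $(q_2,r_2)=(q_1,r_1)$ one has $\frac1{q_2'}=1-\frac{\gamma}{3\alpha}$ while three factors in $L^{q_1}_t$ give $\frac3{q_1}=\frac{\gamma}{\alpha}$; these agree only for $\gamma=\frac{3\alpha}{4}$, and more fundamentally scaling forbids an exact time balance at the regularities $s=0$ (part (1)) and $s=\frac\alpha2$ (part (2)) because these are \emph{subcritical} ($s_c=\frac{\gamma-\alpha}2<s$; your remark that $s=0$ is "the scaling regularity" in part (1) is incorrect). So on $(0,\infty)$ the nonlinear estimate does not close; on $[0,T]$ it closes only with a positive power of $T$ left over. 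Moreover, even if a clean global cubic estimate did hold, it could only be closed under smallness of $\|\varphi\|$ (you need $C\|\varphi\|\le\rho/2$ and $C\rho^3\le\rho/2$), contradicting your parenthetical and the theorem's claim of arbitrary $L^2_{rad}$ (resp. $H^{\alpha/2}_{rad}$, $\lambda=+1$) data.

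The paper's proof repairs exactly this point: it takes the dual pair $(q_2,r_2)=(\infty,2)$, estimates the Duhamel term in $L^1_TL^2$ (resp. $L^1_TH^{\alpha/2}$), and uses H\"older in time to produce the subcritical gain $T^{1-\frac\gamma\alpha}$ in part (1) and $T^{2-\frac\gamma\alpha}$ in part (2). This gives a \emph{local} solution on a time interval whose length depends only on $\|\varphi\|_{L^2}$ (resp. the $H^{\alpha/2}$ norm), and the global statement — with the Strichartz norm only in $L^{q}_{loc}$, as in the theorem — follows by iterating in time using $L^2$ conservation in (1), and energy conservation (plus the smallness/variational argument of Theorem \ref{global3} when $\lambda=-1$) in (2). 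If you restructure your argument this way — local contraction with the $T$-power, then conservation + time iteration — the rest of your bookkeeping (admissibility of the pairs under $\frac{2n}{2n-1}\le\alpha<2$, the Leibniz rule in part (2), $m$-uniformity of constants) goes through as you describe.
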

Contrary to Theorems \ref{global2} and \ref{global3}, the mass-critical case is treated in the part (1) and a better Strichartz norm is obtained in the energy-subcritical case, part (2).

\begin{proof}
\textbf{Case (1)}. Let us define a complete metric space $(Z(T, \rho), d_Z)$ with metric $d_Z$ by
\begin{align*}
&Z(T, \rho) = \left\{v \in Z \equiv C_b ([0,T]; L_{rad}^2) \cap L_T^\frac{3\alpha}{\gamma} L^\frac{2n}{n-\frac{2\gamma}3}) :
\|v\|_Z \le \rho\right\},\\
&\qquad\qquad\qquad\qquad\qquad \;\;d_Z(u, v) = \|u - v\|_Z.
\end{align*}
For some $T$ and $\rho$ we will show that the mapping $\mathcal N$ is a contraction on $Z(T, \rho)$.

From \eqref{rad str1} and \eqref{rad str2} with $\theta = 0$ and
$(q_1,r_1) = (\frac{3\alpha}\gamma, \frac{2n}{n-\frac{3\gamma}2})$,
$(q_2, r_2) = (\infty, 2)$ (thus $1 - \frac1{r_2} = \frac3{r_1} -
\frac{n-\gamma}{n}$) we have for any $u \in Z(T, \rho)$
\begin{align*}
\|\mathcal N(u)\|_Z &\lesssim \|\varphi\|_{L^2} + \|K_\gamma(|u|^2) u\|_{L_T^1L^2} \lesssim \|\varphi\|_{L^2} + \|u\|_{L_T^3L^{r_1}}^3\\
&\lesssim \|\varphi\|_{L^2} + T^{1 - \frac\gamma\alpha} \|u\|_{L_T^{q_1}L^{r_1}}^3 \lesssim \|\varphi\|_{L^2}+ T^{1 - \frac\gamma\alpha}\rho^3.
\end{align*}
The involved constant is uniform on $m$ if $0 \le m \le m_0$. From the gap condition it follows that $\frac{2n}{2n-1} \le \alpha < 2$.

Similarly one can easily show that for any $u, v \in Z(T, \rho)$
\begin{align*}
d_Z(\mathcal N(u), \mathcal N(v)) \lesssim T^{1-\frac\gamma\alpha}\rho^2d_Z(u, v).
\end{align*}
For suitable $\rho$ and $T$, $\mathcal N$ becomes a contraction mapping, which means there is a unique solution $u \in Z(T, \rho)$. Now by the $L^2$ conservation and time iteration scheme, $u$ turns out to be a global solution of \eqref{main eqn}.

\textbf{Case (2)}. In this case we define the metric space $(Z(T, \rho), d_Z)$ by
\begin{align*}
&Z(T, \rho) = \left\{v \in Z \equiv C_b ([0,T]; H_{rad}^\frac\alpha2) \cap L_T^\frac{3\alpha}{\gamma-\alpha} H_\frac{2n}{n-\frac{2(\gamma-\alpha)}3}^\frac\alpha2) :
\|v\|_Z \le \rho\right\},\\
&\qquad\qquad\qquad\qquad\qquad \;\;d_Z(u, v) = \|u - v\|_Z.
\end{align*}
As above we choose $\theta = 0$, $(q_1, r_1) =
(\frac{3\alpha}{\gamma-\alpha},
\frac{2n}{n-\frac{2(\gamma-\alpha)}3})$ and $(q_2, r_2) = (\infty,
2)$. Then $$1-\frac1{r_2} = 2(\frac1{r_1} - \frac\alpha{2n}) -
\frac{n-\gamma}n + \frac1{r_1}$$ and we have
\begin{align*}
\|\mathcal N(u)\|_Z &\lesssim \|\varphi\|_{H^\frac\alpha2} + \|K_\gamma(|u|^2) u\|_{L_T^1H^\frac\alpha2} \\
&\lesssim \|\varphi\|_{L^2} + T^{2 - \frac\gamma\alpha} \|u\|_{L_T^{q_1}H_{r_1}^\frac\alpha2}^3\\
& \lesssim \|\varphi\|_{L^2}+ T^{2 - \frac\gamma\alpha}\rho^3
\end{align*}
and
\begin{align*}
d_Z(\mathcal N(u), \mathcal N(v)) \lesssim \|K_\gamma(|u|^2) u -
K_\gamma(|v|^2)v\|_{L_T^1H^\frac\alpha2} \lesssim
T^{2-\frac\gamma\alpha}\rho^2d_Z(u, v).
\end{align*}
We now have only to choose $T, \rho$ for contraction of $\mathcal N$. This yields the local existence.

Using energy conservation and time iteration scheme for $\lambda = +1$ and smallness argument as in Theorem \ref{global3} for $\lambda = -1$, we get a unique global solution. This completes the proof of Theorem \ref{rad thm1}.

\end{proof}

\subsection{Critical case}
\begin{theorem}\label{rad thm2}
$(1)$ Let $\frac{2n}{2n-1} \le \alpha < 2$ and $\alpha \le \gamma < n$. If $\varphi \in H_{rad}^\frac{\gamma-\alpha}2$ and $\|\varphi\|_{H^\frac{\gamma-\alpha}2}$ is sufficiently small, then there exists a unique solution $u$ of \eqref{main eqn}
such that $u \in C_b([0, \infty) ; H_{rad}^\frac{\gamma-\alpha}2) \cap L^3(0, \infty ; H_\frac{2n}{n-\frac{2\alpha}3}^\frac{\gamma-\alpha}2)$.

\noindent $(2)$ Let $\frac{2n}{2n-1} \le \alpha < 2$ and $\frac\alpha3 \le \gamma < \alpha$. If $\varphi \in \dot H_{rad}^\frac{\gamma-\alpha}2$ and $\|\varphi\|_{\dot H^\frac{\gamma-\alpha}2}$ is sufficiently small, then there exists a unique solution $u$ of \eqref{main eqn}
such that $u \in C_b([0, \infty) ; H_{rad}^\frac{\gamma-\alpha}2) \cap L^3(0, \infty ; L^\frac{2n}{n-(\gamma - \frac\alpha3)})$.
\end{theorem}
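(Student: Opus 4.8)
The plan is to prove both parts of Theorem~\ref{rad thm2} by the same contraction-mapping scheme used for Theorem~\ref{rad thm1}, but now with the \emph{scaling-critical} radial Strichartz estimates \eqref{rad str1}--\eqref{rad str2} (with a nonzero value of $\theta$), so that no power of $T$ is available and the smallness of $\|\varphi\|_{H^{(\gamma-\alpha)/2}}$ (resp.\ $\|\varphi\|_{\dot H^{(\gamma-\alpha)/2}}$) must do all the work. For part (1), I would fix the admissible pair $(q_1,r_1)=(3,\tfrac{2n}{n-2\alpha/3})$, which satisfies the radial range condition and, together with $\theta=\tfrac{\gamma-\alpha}{2}$, the gap condition $\tfrac{\alpha}{q_1}+\tfrac{n}{r_1}=\tfrac n2-\theta$; correspondingly I would take $(q_2,r_2)$ dual-type so that $\tfrac{\alpha}{q_2}+\tfrac n{r_2}=\tfrac n2+\theta$, the natural choice being forced by the trilinear structure of $F(u)=\lambda K_\gamma(|u|^2)u$. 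Then on the metric space
\[
Z(\rho)=\bigl\{v\in C_b([0,\infty);H_{rad}^{\frac{\gamma-\alpha}2})\cap L^3(0,\infty;H_{\frac{2n}{n-2\alpha/3}}^{\frac{\gamma-\alpha}2}):\ \|v\|_Z\le\rho\bigr\},\qquad d_Z(u,v)=\|u-v\|_Z,
\]
I would estimate $\|\mathcal N(u)\|_Z\lesssim\|\varphi\|_{H^{(\gamma-\alpha)/2}}+\|K_\gamma(|u|^2)u\|_{L^1_tH^{(\gamma-\alpha)/2}\text{-type norm}}$ and close it as $\lesssim\|\varphi\|_{H^{(\gamma-\alpha)/2}}+\rho^3$, so that $\mathcal N$ maps $Z(\rho)$ to itself and is a contraction once $\|\varphi\|$ is small. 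Part (2) is identical after replacing the inhomogeneous Sobolev space by the homogeneous $\dot H^{(\gamma-\alpha)/2}$ and the endpoint space by $L^{\frac{2n}{n-(\gamma-\alpha/3)}}$; here the restriction $\gamma\ge\alpha/3$ enters precisely to keep the relevant Lebesgue exponent above $2$ (so that $(\gamma-\alpha)/2\le 0$ is still a legitimate Sobolev index paired with an $L^p$, $p\ge 2$).

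The key steps, in order, are: (i) verify that the chosen pairs $(q_i,r_i)$ satisfy the radial admissibility range conditions and the exclusions $(n,q_i,r_i)\ne(2,2,\infty)$, $(q_i,r_i)\ne(2,\tfrac{2(2n-1)}{2n-3})$, together with the gap conditions at level $\theta=\tfrac{\gamma-\alpha}{2}$; (ii) apply \eqref{rad str1} to the free term $U(t)\varphi$ and \eqref{rad str2} to the Duhamel term, reducing everything to bounding $K_\gamma(|u|^2)u$ in the dual norm $L^{q_2'}_tL^{r_2'}_x$ (for part (1), after commuting $D_m^{(\gamma-\alpha)/2}$ inside via the fractional Leibniz rule, Lemma~\ref{lei}); (iii) control $\|K_\gamma(|u|^2)\|$ by Hardy--Littlewood--Sobolev (using $0\le\psi\in L^\infty$ to drop $\psi$), then distribute the three factors of $u$ across the spacetime norms by H\"older in $t$ and $x$ and Sobolev embedding $H_{r_1}^{(\gamma-\alpha)/2}\hookrightarrow L^p$ for the appropriate $p$; (iv) run the analogous difference estimate to get $d_Z(\mathcal N(u),\mathcal N(v))\lesssim\rho^2 d_Z(u,v)$, with the quadratic $\rho^2$ again small by hypothesis; (v) conclude existence, uniqueness, and $C_b$-in-time regularity by the fixed point theorem, noting that global existence is automatic since no time-localization was used and the a priori bound is uniform on $[0,\infty)$; (vi) observe that all constants are independent of $m$ because the Strichartz constants in \eqref{rad str1}--\eqref{rad str2} are.

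The main obstacle I anticipate is step (iii), the \emph{trilinear estimate in scaling-invariant norms}: because there is no $T$-power to absorb errors, the six exponents (three in the Hartree trilinear term, counted with the weight $|x|^{-\gamma}$) must balance \emph{exactly} against the single admissible Strichartz pair, and one must check that the resulting Lebesgue/Sobolev exponents are all in the legal ranges $(1,\infty]$ (or $[2,\cdot)$ for the Strichartz side) and that the Sobolev embeddings invoked are non-endpoint. In part (1) this is where the condition $\alpha\le\gamma$ is consumed (it guarantees $H^{(\gamma-\alpha)/2}$ is a genuine positive-regularity space so that Lemma~\ref{lei} applies to move the derivatives onto a single factor), and in part (2) the lower bound $\gamma\ge\alpha/3$ is exactly the threshold at which the endpoint exponent $\tfrac{2n}{n-(\gamma-\alpha/3)}$ stops exceeding $\tfrac{2n}{n-\alpha}$ and the construction degenerates; pinning down these two numerology constraints, rather than any deep inequality, is the real content. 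A secondary, more technical point is justifying the fractional Leibniz rule and the commutator of $D_m^{(\gamma-\alpha)/2}$ with the convolution operator $K_\gamma$ at a possibly negative regularity index in part (2); this can be handled as in \cite{chooz} by working with $|\nabla|^{(\gamma-\alpha)/2}$ acting on the product $K_\gamma(|u|^2)\,u$ and using that $K_\gamma(|u|^2)$ is a (fractional) Riesz-type potential of $|u|^2$, so its smoothing exactly compensates the negative derivative.
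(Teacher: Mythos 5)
Your overall scheme is essentially the paper's: a global-in-time small-data contraction in scaling-critical norms based on the radial Strichartz estimates \eqref{rad str1}--\eqref{rad str2} (so no power of $T$ is available and smallness of the data closes the fixed point), with the Hartree trilinear term handled by dropping $\psi$, Hardy--Littlewood--Sobolev, H\"older in $t$ and $x$, Sobolev embedding, and the Leibniz rule where positive regularity is present. For part (2) your outline matches the paper's proof exactly: $\theta=\frac{\gamma-\alpha}2<0$, $(q_1,r_1)=(3,\frac{2n}{n-(\gamma-\frac\alpha3)})$, a dual pair which the paper takes as $(q_2,r_2)=(\infty,\frac{2n}{n-(\alpha-\gamma)})$, and the point that the smoothing of $K_\gamma$ compensates the negative index, i.e.\ $L^{r_2'}\hookrightarrow\dot H^{\frac{\gamma-\alpha}2}$, recovers the $C_b\dot H^{\frac{\gamma-\alpha}2}$ bound; your reading of the constraint $\gamma\ge\frac\alpha3$ (it keeps $r_1\ge2$) is also the right one.

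The one concrete error is in part (1): with $q_1=3$ and $r_1=\frac{2n}{n-\frac{2\alpha}3}$ one has $\frac{\alpha}{q_1}+\frac{n}{r_1}=\frac n2$, so the gap condition $\frac{\alpha}{q_1}+\frac n{r_1}=\frac n2-\theta$ forces $\theta=0$, not $\theta=\frac{\gamma-\alpha}2$ (the two agree only when $\gamma=\alpha$). The paper indeed runs part (1) with $\theta=0$ and carries the critical regularity in the Sobolev index of the norms, estimating $\|K_\gamma(|u|^2)u\|_{L^1_tH^{(\gamma-\alpha)/2}}\lesssim\|u\|^3_{L^3_tH^{(\gamma-\alpha)/2}_{r_1}}$ via Lemma \ref{lei}, HLS and Sobolev embedding; this is exactly what your step (ii) (``commuting $D_m^{(\gamma-\alpha)/2}$ inside'') amounts to, so the slip is repairable, but as written the sentence pairing $\theta=\frac{\gamma-\alpha}2$ with $(3,\frac{2n}{n-\frac{2\alpha}3})$ is inconsistent and the Strichartz estimates would not apply to that pair. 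If instead you insisted on a derivative-free spacetime norm with $\theta=\frac{\gamma-\alpha}2$ in part (1), the gap condition would force $r_1=\frac{2n}{n-\gamma+\frac\alpha3}$, which is not the space asserted in the theorem, and you would in any case still need the Leibniz-rule step (with $\theta=0$ pairs) to obtain the $C_b H^{(\gamma-\alpha)/2}$ bound, so the paper's formulation is the cleaner way to organize the argument.
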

\begin{proof}
\textbf{Case (1)}. We define the metric space $(Z(\rho), d_Z)$ by
\begin{align*}
&Z(\rho) = \left\{v \in Z \equiv C_b ([0,\infty); H_{rad}^\frac{\gamma-\alpha}2) \cap L^3(0,\infty; L^\frac{2n}{n-(\gamma-\frac\alpha3)}) :
\|v\|_Z \le \rho\right\},\\
&\qquad\qquad\qquad\qquad\qquad \;\;d_Z(u, v) = \|u - v\|_Z.
\end{align*}
By the same way as the part (2) of Theorem \ref{rad thm2} we choose
$\theta = 0$, $(q_1, r_1) = (3, \frac{2n}{n-\frac{2\alpha}3})$ and
$(q_2, r_2) = (\infty, 2)$ so that $$1-\frac1{r_2} = 2(\frac1{r_1} -
\frac{\gamma-\alpha}{2n}) - \frac{n-\gamma}n + \frac1{r_1}.$$ Then
we have
\begin{align*}
\|\mathcal N(u)\|_Z &\lesssim \|\varphi\|_{H^\frac{\gamma-\alpha}2} + \|K_\gamma(|u|^2) u\|_{L_T^1H^\frac{\gamma-\alpha}2} \\
&\lesssim \|\varphi\|_{H^\frac{\gamma-\alpha}2} + \|u\|_{L_T^{q_1}H_{r_1}^\frac{\gamma-\alpha}2}^3\\
& \lesssim \|\varphi\|_{L^2}+ \rho^3
\end{align*}
and also
\begin{align*}
d_Z(\mathcal N(u), \mathcal N(v)) \lesssim  \rho^2d_Z(u, v).
\end{align*}
If $C\|\varphi\|_{H^\frac{\gamma-\alpha}{2}} \le \frac\rho2$ and $C\rho^2 \le \frac12$, then $\mathcal N$ is a contraction.
\medskip

\textbf{Case (2)}.
Take metric space $Z(\rho)$ as
\begin{align*}
&Z(\rho) = \left\{v \in Z \equiv C_b ([0,\infty); \dot H_{rad}^\frac{\gamma-\alpha}2) \cap L^3(0,\infty; L^\frac{2n}{n-(\gamma-\frac\alpha3)}) :
\|v\|_Z \le \rho\right\},\\
&\qquad\qquad\qquad\qquad\qquad \;\;d_Z(u, v) = \|u - v\|_Z.
\end{align*}
Then it follows from \eqref{rad str1} and \eqref{rad str2} with
$\theta = \frac{\gamma-\alpha}2$, $(q_1, r_1) = (3,
\frac{2n}{n-(\gamma-\frac\alpha3)})$ and $(q_2, r_2) = (\infty,
\frac{2n}{n-(\alpha-\gamma)})$ that for any $u \in Z(\rho)$
\begin{align*}
\|\mathcal N(u)\|_Z &\lesssim \|\varphi\|_{\dot
H^\frac{\gamma-\alpha}2} + \int_0^\infty\|K_\gamma(|u|^2) u\|_{\dot
H^\frac{\gamma-\alpha}2 \cap L^\frac{2n}{n+\alpha-\gamma}}\,dt.
\end{align*}
Since $\|\psi\|_{\dot H^\frac{\gamma-\alpha}2} \lesssim \|\psi\|_{L^{r_2'}}$ and $\frac1{r_2'} = \frac2{r_1} - \frac{n-\gamma}{n} + \frac1{r_1}$,
$$
\|\mathcal N(u)\|_Z  \lesssim \|\varphi\|_{\dot H^\frac{\gamma-\alpha}2} + \rho^3
$$
and for any $u, v \in Z(\rho)$
$$
d_Z(\mathcal N(u), \mathcal N(v)) \lesssim \rho^2d_Z(u, v).
$$
Taking small $\|\varphi\|_{\dot H^\frac{\gamma-\alpha}2}$ and $\rho$ completes the proof of (2) of Theorem \ref{rad thm2}.
\end{proof}

\section{Existence IV: via weighted Strichartz estimates}
In this section we show the global well-posedness below $L^2$. To
avoid complexity we only consider the case $m = 0$ and $n = 3$. We
utilize the weighted Strichartz estimates \eqref{lomegainfty} and
\eqref{lomega2} and have the following.
\begin{theorem}\label{belowl2thm}
Let $\psi \in L_{rad}^\infty$ and $m = 0$. Suppose that $n = 3$, $\frac {21 + \sqrt{21}}{15} < \alpha \leq 2$
and $\frac {15\alpha - \alpha^2}{12 + 2\alpha} < \gamma < \alpha$.
Then there exists a positive constant $\rho$ depending on $n,
\alpha, \gamma$ and $\lambda$ such that if $\varphi \in
\dot{H}^{s_c}H_\omega^{s_1 + s_2}$ and
$\||\nabla|^{s_c}d_\omega^{s_1+s_2}\varphi\|_{L_x^2} < \rho$,
then the integral equation \eqref{int eqn} has a unique solution
$u\in C_b([0, \infty);\dot{H}^{s_c}H_\omega^{s_1+s_2})$,
where $s_1 = \frac 2q - \frac {\gamma + 1 - \alpha}{2}$ and $s_2$
satisfies that
\begin{align*}
\max\left(\frac {n+1}{q_1} - \frac {\alpha}2, \gamma + 3 - \alpha +
\frac {n+1}{q_1} - \frac {4}{q} \right) < s_2 < \min\left(\frac
{n-1}q, \gamma - \frac {n}{q_1}\right).
\end{align*}
Moreover, there exists $\varphi_+ \in \dot H^{s_c}H^{s_1 + s_2}_\omega$ such that
$$\|u(t) - U(t)\varphi_{+}\|_{\dot H^{s_c}H^{s_1 + s_2}_\omega} \rightarrow 0 \text{ as } t\rightarrow \infty.$$
\end{theorem}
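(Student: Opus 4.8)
The plan is to solve the integral equation \eqref{int eqn} by a contraction mapping argument in the scaling-critical space, adapting the scheme of \cite{chhwoz} with the weighted Strichartz estimates \eqref{lomegainfty}--\eqref{lomega2} in place of the ordinary ones; throughout, $s_c=\frac{\gamma-\alpha}2$. First I would fix parameters $a,b,\beta_1,\beta_2$ inside the admissible ranges of \eqref{lomegainfty} and \eqref{lomega2} and mixed-norm exponents $q,q_1$, and take as resolution space $X$ the space of $u$ of finite norm
\begin{align*}
\|u\|_X &= \|u\|_{L_t^\infty(\dot H^{s_c}H_\omega^{s_1+s_2})} + \||x|^a|\nabla|^{a-\frac n2+s_c}d_\omega^{\beta_1+s_1+s_2}u\|_{L_t^q L_r^\infty L_\omega^2}\\
&\quad + \||x|^b|\nabla|^{1+b-\frac{2-\alpha}2+s_c}d_\omega^{\beta_2+s_1+s_2}u\|_{L_{t,x}^2},
\end{align*}
with metric $d_X(u,v)=\|u-v\|_X$. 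Since $U(t)$ is a radial Fourier multiplier it commutes with $|\nabla|$ and with $d_\omega$, so applying \eqref{lomegainfty}, \eqref{lomega2} (and their interpolates) to $|\nabla|^{s_c}d_\omega^{s_1+s_2}\varphi$ gives $\|U(t)\varphi\|_X\lesssim\||\nabla|^{s_c}d_\omega^{s_1+s_2}\varphi\|_{L_x^2}$; the dual versions of these estimates, handling the retarded integral by the usual Christ--Kiselev argument where needed, then yield
$$\|\mathcal N(u)\|_X\lesssim\||\nabla|^{s_c}d_\omega^{s_1+s_2}\varphi\|_{L_x^2}+\|F(u)\|_{X'},$$
where $X'$ is the corresponding dual space, essentially a sum of an $L_t^1(\dot H^{s_c}H_\omega^{s_1+s_2})$-type norm and a weighted $L_{t,x}^2$-type norm.

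The heart of the proof is the cubic nonlinear estimate $\|F(u)\|_{X'}\lesssim\|u\|_X^3$ together with the Lipschitz version $\|F(u)-F(v)\|_{X'}\lesssim(\|u\|_X^2+\|v\|_X^2)\,d_X(u,v)$. To prove the first I would bound the Hartree potential pointwise, $0\le K_\gamma(|u|^2)(x)\le\|\psi\|_{L^\infty}\int_{\mathbb R^3}|x-y|^{-\gamma}|u(y)|^2\,dy$, and then control the remaining Riesz-type convolution by a weighted (Stein--Weiss) Hardy--Littlewood--Sobolev inequality in the radial variable combined with H\"older on the sphere $S^2$; this is precisely the mechanism by which the $|x|$-weights in $X$ are traded against those in $X'$. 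After distributing $|\nabla|^{s_c}$ and $d_\omega^{s_1+s_2}$ over the product $K_\gamma(|u|^2)\cdot u$ using the fractional Leibniz rule (Lemma \ref{lei}) and its angular analogue, the estimate reduces to a finite list of terms, each closed by H\"older in $t$, $r$ and $\omega$, Sobolev embedding $H_\omega^{s_2}\hookrightarrow L_\omega^{r_\omega}(S^2)$ on the sphere, and Sobolev embedding in the radial variable. The exponents $q,q_1$ and the parameters $a,b,\beta_1,\beta_2$ are then chosen so that every H\"older relation balances and every derivative index stays inside its admissible window, and this turns out to be possible exactly under $n=3$, $\frac{21+\sqrt{21}}{15}<\alpha\le2$, $\frac{15\alpha-\alpha^2}{12+2\alpha}<\gamma<\alpha$ and the displayed two-sided bound on $s_2$. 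This exponent bookkeeping is the main obstacle and the source of the restrictive hypotheses. Crucially, because we work at the critical regularity $s_c$, no positive power of $T$ is produced, so the estimate is global in time.

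Granting the two nonlinear estimates, a standard Banach fixed-point argument on a small ball $\{u\in X:\|u\|_X\le\rho\}$ — with $\rho$ chosen small and $\||\nabla|^{s_c}d_\omega^{s_1+s_2}\varphi\|_{L_x^2}<\rho$ — yields the unique global solution $u\in C_b([0,\infty);\dot H^{s_c}H_\omega^{s_1+s_2})$. For the scattering statement I would set $\varphi_+=\varphi-i\int_0^\infty U(-t')F(u)(t')\,dt'$, which lies in $\dot H^{s_c}H_\omega^{s_1+s_2}$ since $F(u)\in X'$ with finite norm; the time integrability encoded in $X'$ then forces $\|\int_t^\infty U(-t')F(u)(t')\,dt'\|_{\dot H^{s_c}H_\omega^{s_1+s_2}}\to0$, i.e. $\|u(t)-U(t)\varphi_+\|_{\dot H^{s_c}H_\omega^{s_1+s_2}}\to0$ as $t\to\infty$. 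Every step other than the exponent verification in the nonlinear estimate is a routine adaptation of the argument in \cite{chhwoz}.
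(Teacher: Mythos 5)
Your overall architecture (contraction in a weighted resolution space built from \eqref{lomegainfty}--\eqref{lomega2}, Christ--Kiselev for the retarded integral, scattering via the Cauchy criterion) matches the scheme of Section 6, but the step you identify as the heart of the proof would fail as described. You propose to ``distribute $|\nabla|^{s_c}$ and $d_\omega^{s_1+s_2}$ over the product $K_\gamma(|u|^2)u$ using the fractional Leibniz rule (Lemma \ref{lei})''. In the regime of the theorem one has $\gamma<\alpha$, so $s_c=\frac{\gamma-\alpha}2<0$: the solution lives below $L^2$, and there is no Leibniz rule for a negative-order derivative (Lemma \ref{lei} is stated only for $s\ge 0$). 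The entire point of the weighted approach of \cite{chhwoz}, which the paper follows, is to avoid ever applying $|\nabla|^{s_c}$ to the nonlinearity: the dual of the interpolated estimate \eqref{inter1} produces $\||x|^{-c_0}|\nabla|^{-s_c}d_\omega^{-s_1}[K_\gamma(|u|^2)u]\|_{L^{q'}_tL^{q'}_rL^2_\omega}$, so the derivative factors $|\nabla|^{s_c}d_\omega^{s_1}$ carried by the solution cancel against $|\nabla|^{-s_c}d_\omega^{-s_1}$ and are replaced by the pure weight $|x|^{-c_0}$. That weight, unlike a fractional derivative, splits pointwise as $|x|^{-c_1}\cdot|x|^{-c_2}$ between the Hartree potential and $u$; the potential piece is then handled by the weighted convolution estimate (Lemma \ref{weighted con}), and only the angular derivative $d_\omega^{s_2}$ is distributed, via a Leibniz rule on the sphere, which is legitimate because $d_\omega^{s_2}$ has positive order and commutes with radial weights and with convolution against the radial kernel $\psi/|\cdot|^\gamma$ (this is where radial symmetry of $\psi$ enters). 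This is the content of Lemmas \ref{general duha1} and \ref{general duha2}.

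Relatedly, your auxiliary norms still carry $|\nabla|$-factors composed with weights on $u$; the paper's auxiliary norms $\widetilde W_1$, $\widetilde W_2$ deliberately contain no $|\nabla|$ at all (only weights $|x|^{-(\gamma-\frac n{q_1}+c_1)/2}$, $|x|^{-c_2}$ and angular derivatives), precisely so that the trilinear estimate closes by H\"older in $t,r,\omega$, the weighted convolution lemma, and Sobolev embedding on $S^{n-1}$, without any radial fractional Leibniz rule or commutation of $|\nabla|^{s}$ with weights. To repair your argument you would need to replace the Leibniz step by this derivative-to-weight conversion (dual weighted Strichartz with the specific weight $|x|^{-c_0}$, $c_0=c_1+c_2$, and the exponents $q,q_1,q_2$ fixed as in the paper), after which the remaining bookkeeping, the Christ--Kiselev closure of $\widetilde W_1,\widetilde W_2$, the fixed point on a small ball, and the scattering construction of $\varphi_+$ proceed essentially as you outline.
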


The proof of the theorem consists of several subsections.

\subsection{Weighted estimates}
In this subsection we assume that $n \ge 2$. We introduce several weighted estimates based on the Strichartz estimates \eqref{lomegainfty} and \eqref{lomega2}. In fact, from interpolation of \eqref{lomegainfty} and \eqref{lomega2} we get the following:
\begin{lemma}\label{interpolation}
Let $n \ge 2$ and $2 \leq q \le \infty$. Then\\
$(1)$ For each $c$ and $\delta_1$ such
that
\begin{align*}
&-\frac nq < c < - \frac nq + \frac{n-1}2,\\
&\quad  \delta_1 \le -\frac nq + \frac{n-1}2 - c,
\end{align*}
we have
\begin{align}\label{inter1}
\| |x|^c|\nabla|^{c+\frac{n+\alpha}q - \frac
n2}d_\omega^{\delta_1}U(t)\varphi\|_{L^q_tL^q_rL^2_\omega} \lesssim
\|\varphi\|_{L_x^2}. \end{align}

$(2)$ For each $c$ with $-\frac nq < c < -\frac 1q$ and $\delta_2
\le -c - \frac 1q$ we have
\begin{align}\label{inter2}
\|
|x|^c|\nabla|^{c+\frac{\alpha}{q}}d_\omega^{\delta_2}U(t)\varphi\|_{L_t^q
L_x^2} \lesssim \|\varphi\|_{L_x^2}. \end{align}
\end{lemma}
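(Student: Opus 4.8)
The plan is to obtain both \eqref{inter1} and \eqref{inter2} by Stein's complex interpolation between the two endpoint weighted estimates \eqref{lomegainfty} and \eqref{lomega2}. Since $m=0$ throughout Section~6 we have $D_m=|\nabla|$ and $1-\frac{2-\alpha}2=\frac\alpha2$, so \eqref{lomega2} reads $\||x|^{b}|\nabla|^{b+\frac\alpha2}d_\omega^{\beta_2}U(t)\varphi\|_{L^2_{t,x}}\lesssim\|\varphi\|_{L^2_x}$ for $-\frac n2<b<-\frac12$, $\beta_2\le-\frac12-b$. In polar coordinates $L^2_{t,x}=L^2_tL^2_rL^2_\omega$, and the mixed Lebesgue spaces $L^{q}_tL^{q}_rL^2_\omega$ (resp. $L^q_tL^2_x$) form a complex interpolation scale between their $q=2$ and $q=\infty$ members (Calderón), the relation $\frac1q=\frac{1-\theta}2$ identifying the interpolation parameter $\theta\in[0,1]$. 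For part~(1) I interpolate \eqref{lomega2} (the $q=2$ endpoint) against \eqref{lomegainfty} (the $q=\infty$ endpoint, with target $L^\infty_tL^\infty_rL^2_\omega$); for part~(2) I interpolate \eqref{lomega2} against the unitarity bound $\|U(t)\varphi\|_{L^\infty_tL^2_x}=\|\varphi\|_{L^2_x}$, which plays the role of the (otherwise degenerate) $q=\infty$ endpoint carrying neither weight nor angular derivative. For $q=2$, and for $q=\infty$ in part~(1), there is nothing to prove.

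For part~(1), fix $q$ and $c$ with $-\frac nq<c<-\frac nq+\frac{n-1}2$. Since $(a,b)\mapsto(1-\theta)b+\theta a$ maps the rectangle $(0,\frac{n-1}2)\times(-\frac n2,-\frac12)$ onto exactly the interval $(-\frac nq,-\frac nq+\frac{n-1}2)$, one may choose $a\in(0,\frac{n-1}2)$, $b\in(-\frac n2,-\frac12)$ with $(1-\theta)b+\theta a=c$; put $\beta_1=\frac{n-1}2-a$, $\beta_2=-\frac12-b$, so that $S_1:=|x|^{a}|\nabla|^{a-\frac n2}d_\omega^{\beta_1}U(t)$ is bounded by \eqref{lomegainfty} and $S_0:=|x|^{b}|\nabla|^{b+\frac\alpha2}d_\omega^{\beta_2}U(t)$ by \eqref{lomega2}. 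On the strip $0\le\Re z\le1$ define the analytic family
$$T_z=|x|^{(1-z)b+za}\,|\nabla|^{(1-z)(b+\frac\alpha2)+z(a-\frac n2)}\,d_\omega^{(1-z)\beta_2+z\beta_1}\,U(t).$$
Here $|\nabla|$, $d_\omega=(1-\Delta_\omega)^{1/2}$ and $U(t)$ commute with one another, and $|x|^{c}$ commutes with $d_\omega$; the only non-commuting pair is $|x|^{c}$ with $|\nabla|^{\mu}$. The key point is that on each edge $\Re z=\varepsilon_0\in\{0,1\}$ one may write
$$T_{\varepsilon_0+it'}\varphi=|x|^{it'(a-b)}\,S_{\varepsilon_0}\big(|\nabla|^{it'(a-\frac n2-b-\frac\alpha2)}\,d_\omega^{it'(\beta_1-\beta_2)}\varphi\big),$$
which is legitimate because the imaginary powers of $|\nabla|$ and $d_\omega$ start out to the right of $|x|^{\cdot}$ and commute with $S_{\varepsilon_0}$. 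The inner factor is an $L^2_x$-isometry (Plancherel and the spectral theorem), $S_{\varepsilon_0}$ is bounded by \eqref{lomegainfty}/\eqref{lomega2}, and $|x|^{it'(a-b)}$ is pointwise unimodular, hence an isometry of $L^{p_0}_tL^{p_0}_rL^2_\omega$; thus $\|T_{\varepsilon_0+it'}\|\lesssim1$ uniformly in $t'$. Stein interpolation then gives $\|T_\theta\|_{L^2_x\to L^q_tL^q_rL^2_\omega}\lesssim1$, i.e. \eqref{inter1} with $|\nabla|$-exponent $(1-\theta)(b+\frac\alpha2)+\theta(a-\frac n2)=c+\frac{n+\alpha}q-\frac n2$ and $d_\omega$-exponent $\delta_1=(1-\theta)\beta_2+\theta\beta_1=-\frac nq+\frac{n-1}2-c$. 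This last value is independent of the chosen splitting of $c$, and any smaller $\delta_1$ is reduced to it by factoring off a negative power of $d_\omega$, which is bounded on $L^2_\omega$.

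Part~(2) is the same argument with the second endpoint replaced by unitarity: for $-\frac nq<c<-\frac1q$ choose $b\in(-\frac n2,-\frac12)$ with $(1-\theta)b=c$ (i.e. $b=\frac q2c$), set $\beta_2=-\frac12-b$, and interpolate the family $|x|^{(1-z)b}|\nabla|^{(1-z)(b+\frac\alpha2)}d_\omega^{(1-z)\beta_2}U(t)$ between $L^2_{t,x}$ (from \eqref{lomega2}) and $L^\infty_tL^2_x$ (from unitarity, the operator at $z=1$ being $U(t)$ itself), using the same device to push the imaginary powers onto $\varphi$. This yields \eqref{inter2} with $|\nabla|$-exponent $(1-\theta)(b+\frac\alpha2)=c+\frac\alpha q$ and $\delta_2=(1-\theta)\beta_2\le-c-\frac1q$, again reducing general $\delta_2$ to the extremal one. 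I expect the only real obstacle to be the interpolation bookkeeping: arranging the analytic family so that Stein's theorem applies in spite of the non-commutativity of $|x|^c$ and $|\nabla|^\mu$, and invoking complex interpolation of the mixed Lebesgue spaces. Both are carried out exactly as in \cite{chhwoz} and \cite{fawa}, and the remaining computations are the affine bookkeeping above.
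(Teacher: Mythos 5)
Your proposal is correct and follows essentially the same route as the paper, which simply interpolates \eqref{lomegainfty} with \eqref{lomega2} for part (1) and \eqref{lomega2} with the trivial bound $\|U(t)\varphi\|_{L_t^\infty L_x^2}=\|\varphi\|_{L_x^2}$ for part (2). Your exponent bookkeeping (the affine relations giving $c+\frac{n+\alpha}q-\frac n2$, $c+\frac\alpha q$ and the extremal $\delta_1,\delta_2$) matches the statement, and the Stein analytic-family device with imaginary powers pushed onto $\varphi$ is a legitimate way of making precise what the paper leaves as ``arranging interpolation indices.''
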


\begin{proof}
Interpolating \eqref{lomegainfty} and \eqref{lomega2}, we obtain
\eqref{inter1} after arranging interpolation indices with respect to
$c$ of interpolated weight $|x|^c$.
 For \eqref{inter2} one can use \eqref{lomega2} and trivial estimate $\|U(t)\varphi\|_{L_t^\infty L_x^2} = \|\varphi\|_{L_x^2}$.
\end{proof}

To handle the Hartree nonlinearity we consider the following
weighted convolution estimates ( see \cite{chozsash} and \cite{chonak}).

\begin{lemma}\label{weighted con}
Let $1 \le p, q \le \infty$, $0\le d_1 < d_2 < \frac{n-1}{p'}$ and
$\frac 1{q} \le 1- \frac{d_2}{n-1}$. Then we have
\begin{align}\label{weighted1}
 \||x|^{d_1}(|x|^{-\frac np-d_2}*f)\|_{L^p_x} \lesssim \||x|^{-(d_2-d_1)}f\|_{L^1_r
 L^{q,\,1}_\omega}.
 \end{align}
Moreover, if $p = \infty$, then $d_1 = d_2$ is also allowed.
Here $L^{q,\,1}_\omega$ denotes the Lorentz space on the unit sphere.
\end{lemma}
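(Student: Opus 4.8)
The plan is to separate the radial and angular variables, use scaling to kill the convolution's dilation symmetry, and then split $\mathbb R^n$ into three regions; the only real work is a trace‑type bound near the unit sphere, which is handled by a Lorentz‑space convolution inequality on $S^{n-1}$.

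\textbf{Step 1: reduction to a single spherical shell.} Writing $y=\sigma\eta$ with $\sigma>0,\ \eta\in S^{n-1}$,
\begin{align*}
(|x|^{-\frac np-d_2}*f)(x)=\int_0^\infty\sigma^{n-1}\Big(\int_{S^{n-1}}|x-\sigma\eta|^{-\frac np-d_2}f(\sigma\eta)\,d\eta\Big)\,d\sigma=:\int_0^\infty (T_\sigma f)(x)\,d\sigma .
\end{align*}
By Minkowski's integral inequality, $\||x|^{d_1}(|x|^{-\frac np-d_2}*f)\|_{L^p_x}\le\int_0^\infty\||x|^{d_1}T_\sigma f\|_{L^p_x}\,d\sigma$, and the substitution $x=\sigma z$ gives $\||x|^{d_1}T_\sigma f\|_{L^p_x}=\sigma^{\,n-1-(d_2-d_1)}\,\||z|^{d_1}Sg_\sigma\|_{L^p_z}$, where $g_\sigma:=f(\sigma\,\cdot)$ restricted to $S^{n-1}$ and $Sg(z):=\int_{S^{n-1}}|z-\eta|^{-\frac np-d_2}g(\eta)\,d\eta$. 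Since $\int_0^\infty\sigma^{\,n-1-(d_2-d_1)}\|g_\sigma\|_{L^{q,1}(S^{n-1})}\,d\sigma=\||x|^{-(d_2-d_1)}f\|_{L^1_rL^{q,1}_\omega}$, the lemma reduces to the scale‑invariant estimate $\||z|^{d_1}Sg\|_{L^p_z(\mathbb R^n)}\lesssim\|g\|_{L^{q,1}(S^{n-1})}$.

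\textbf{Step 2: the three regions.} Put $\beta:=\frac np+d_2$. On $\{|z|\le\frac12\}$ one has $|z-\eta|\sim1$, hence $|Sg(z)|\lesssim\|g\|_{L^1(S^{n-1})}\lesssim\|g\|_{L^{q,1}(S^{n-1})}$ and $\||z|^{d_1}\|_{L^p(|z|\le\frac12)}<\infty$ because $d_1\ge0>-\frac np$; on $\{|z|\ge2\}$ one has $|z-\eta|\sim|z|$, hence $|Sg(z)|\lesssim|z|^{-\beta}\|g\|_{L^{q,1}(S^{n-1})}$ and $\||z|^{d_1-\beta}\|_{L^p(|z|\ge2)}<\infty$ because $d_1<d_2$ (i.e. $d_1-\beta<-\frac np$). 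It remains to bound $\|Sg\|_{L^p(\frac12<|z|<2)}$, where the weight $|z|^{d_1}\sim1$ is harmless. Writing $z=r\omega$ and using $|z-\eta|^2=(r-1)^2+r|\omega-\eta|^2$, the kernel is $\sim\big((r-1)^2+|\omega-\eta|^2\big)^{-\beta/2}$. For fixed transverse height $a=r-1\in(-\frac12,\frac12)$ the angular kernel $K_a$ on $S^{n-1}$ obeys $K_a(\eta)\lesssim\min(|a|^{-\beta},|\omega-\eta|^{-\beta})$, so (using $|\omega-\eta|^{-\beta}\in L^{\frac{n-1}{\beta},\infty}(S^{n-1})$) one gets $\|K_a\|_{L^1(S^{n-1})}\lesssim|a|^{\min(0,\,n-1-\beta)}$ (a harmless log if $\beta=n-1$) and, sharply, $\|K_a\|_{L^{t,\infty}(S^{n-1})}\lesssim|a|^{\min(0,\,\frac{n-1}t-\beta)}$. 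If $q\ge p$ use $\|g\|_{L^{p}(S^{n-1})}\lesssim\|g\|_{L^{q,1}(S^{n-1})}$ (finite measure) and Young, $\|K_a*g\|_{L^p_\omega}\le\|K_a\|_{L^1}\|g\|_{L^p_\omega}$; the $a$‑integral of the $p$‑th power converges exactly when $\beta-(n-1)<\frac1p$, i.e. $d_2<\frac{n-1}{p'}$. If $q<p$ (note $q>1$ since $d_2>0$) take $t$ with $\frac1t=1+\frac1p-\frac1q\in(0,1)$ and apply the Young–O'Neil inequality for Lorentz spaces on $S^{n-1}$, $\|K_a*g\|_{L^p_\omega}\lesssim\|K_a\|_{L^{t,\infty}}\|g\|_{L^{q,1}_\omega}$; then the $a$‑integral converges when $\big(\tfrac{n-1}t-\beta\big)p>-1$, which rearranges to $\frac1q\le1-\frac{d_2}{n-1}$. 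Minkowski in $(a,\omega)$ then gives the scale‑invariant estimate for $1<p<\infty$.

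\textbf{Step 3: the endpoint $p=\infty$, and the obstacle.} For $p=\infty$ there is no radial integration: one bounds $|z|^{d_1}|Sg(z)|$ pointwise on the same three regions, the transition region by Hölder's inequality for Lorentz spaces on $S^{n-1}$ together with $|z-\eta|^{-d_2}\in L^{\frac{n-1}{d_2},\infty}(S^{n-1})$, the exponent matching being again $\frac1q\le1-\frac{d_2}{n-1}$; this argument also tolerates $d_1=d_2$. (The case $p=1$ is vacuous: then $\frac{n-1}{p'}=0$, so $0\le d_1<d_2<0$ is empty.) The genuinely delicate point is the transition region $|z|\sim1$: because $\beta=\frac np+d_2$ may exceed $n-1$, the kernel need not be integrable on $S^{n-1}$, so the cutoff at scale $|\,|z|-1\,|$ cannot be avoided, and this is precisely where $d_2<\frac{n-1}{p'}$ (the $q\ge p$ case) and $\frac1q\le1-\frac{d_2}{n-1}$ (the $q<p$ case) are consumed. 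At the extreme $\frac1q=1-\frac{d_2}{n-1}$ the naive "fix $a$, then integrate" argument loses a logarithm, and one must instead treat the transverse height and the angular variable jointly — estimating the Riesz potential of the measure $g\,d\eta$ carried by $S^{n-1}$ directly in the $n$‑dimensional slab, e.g. via a dyadic decomposition of $g$ — which is exactly the step forcing the use of $L^{q,1}$ rather than $L^q$ on the right; this, following \cite{chozsash,chonak}, is the part I expect to require the most care.
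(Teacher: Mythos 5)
First, a point of reference: the paper does not prove this lemma at all --- it is quoted from \cite{chozsash} and \cite{chonak} --- so there is no in-text proof to compare against. Judged on its own terms, your reduction is the right one and most of it is carried out correctly: the polar decomposition plus the scaling $x=\sigma z$ correctly converts the claim into the single scale-invariant estimate $\||z|^{d_1}Sg\|_{L^p(\mathbb R^n)}\lesssim\|g\|_{L^{q,1}(S^{n-1})}$, the inner and outer regions consume exactly $d_1\ge 0$ and $d_1<d_2$ (with $d_1=d_2$ admissible when $p=\infty$), and in the transition region your bookkeeping is accurate: the exponent arithmetic does return $d_2<\frac{n-1}{p'}$ in the case $q\ge p$ and $\frac1q<1-\frac{d_2}{n-1}$ in the case $q<p$. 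The $p=\infty$ endpoint via Lorentz--H\"older on the sphere is also fine.

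The genuine gap is the one you flag yourself: for $q<p<\infty$ your slicewise argument (fix $a=r-1$, apply O'Neil on $S^{n-1}$, then integrate in $a$) only proves the lemma under the \emph{strict} inequality $\frac1q<1-\frac{d_2}{n-1}$, while the statement allows equality, and equality is precisely the case in which the $L^{q,1}$ refinement on the right-hand side has any content --- and precisely the case the paper invokes in the proof of Lemma 6.4, where $q$ is chosen so that $\frac1q=1-\frac{d_2}{n-1}$ exactly. At that endpoint $\|K_a\|_{L^{t,\infty}(S^{n-1})}\sim|a|^{-1/p}$ and the $a$-integral diverges logarithmically, so the two variables must be treated jointly; the standard repair is to view $g\mapsto\bigl((a^2+|\omega-\eta|^2)^{-\beta/2}*_{\omega}g\bigr)$ as an operator from functions on $S^{n-1}$ to functions on the slab $(-\tfrac12,\tfrac12)\times S^{n-1}$, prove weak-type $(q_0,p_0)$ and $(q_1,p_1)$ bounds at two neighbouring points of the same scaling line (or equivalently run a dyadic decomposition of the level sets of $g$), and conclude by real interpolation that $L^{q,1}\to L^{p}$ at the endpoint. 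You name this step but do not execute it, so as written the proof does not cover the configuration in which the lemma is actually used. The rest stands; only this interpolation argument needs to be supplied.
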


 Throughout the section the triplet $(c_0, c_1, c_2)$
denotes
\begin{center}$\left(\frac{\gamma + n - \alpha}{2}
- \frac{n+\alpha}q, \quad \frac {n+\alpha}{q_1} - \alpha, \quad
\frac {n+\alpha}{q_2} + \frac{\gamma - n -
\alpha}{2}\right).$\end{center} Here we use the explicit exponents
\begin{align*}
\frac 1{q_1} &= \frac12(\frac {\alpha + \gamma}{\alpha - 1} + \frac {\alpha + 2\gamma}{4n + 2}),\\
\frac 1{q_2} &= \frac 12(\frac {\alpha - \gamma + 1}{2\alpha} + \frac {\alpha + 1 - \gamma + \frac {2n - 4}{q1}}{4}),\\
\frac 1q &= 1 - \frac 1{q_1} - \frac 1{q_2}.
\end{align*}
Note that $c_0 = c_1 + c_2$.

\subsection{Duhamel formula}
One can verify that $q,q_1$ and $q_2$ defined above satisfy all the assumption in the
following lemmas.

We first consider $\dot H^{s_c}H_\omega^{s_1+s_2}$ estimate for the
Duhamel part $U(t)\Phi_t$, where $$\Phi_t \equiv -i\lambda\int_0^t U(-t')K_\gamma(|u|^2)u(t')\,dt'.$$
\begin{lemma}\label{general duha1}
Let $s_1 = \frac 2q - \frac{\gamma + 1 - \alpha}2$ and $0 \leq s_2
\leq \min(\gamma - \frac n{q_1}, \frac{n-1}{q_1})$. Suppose that
$q_1$ satisfies $\frac {\alpha - \gamma}{\alpha} < \frac 1{q_1} \leq
\frac {\alpha}{n + \alpha}$, then we have
\begin{align*}
\| |\nabla|^{s_c}d_\omega^{s_1 + s_2}U(t)\Phi_t\|_{L^\infty_tL^2_x}
\lesssim
\||x|^{-c_0}d_\omega^{s_2}[K_\gamma(|u|^2)u]\|_{L^{q'}_tL^{q'}_rL^2_\omega}
\lesssim \widetilde W_1(u)^2\widetilde W_2(u),
\end{align*}
where
\begin{align*}
&\widetilde W_1(u) = \||x|^{-(\gamma - \frac n{q_1} +
c_1)/2}d_\omega^{(\gamma - \frac n{q_1} +
s_2)/2}u\|_{L^{2q_1}_tL^2_x},\\
&\widetilde W_2(u) = \||x|^{-c_2}d_\omega^{\frac
{n-1}{q_1}}u\|_{L^{q_2}_tL^{q_2}_rL^2_\omega}.
\end{align*}
\end{lemma}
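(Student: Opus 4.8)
The plan is to prove Lemma~\ref{general duha1} by a dual-Strichartz argument followed by a weighted Leibniz/H\"older decomposition of the Hartree nonlinearity. First I would apply the duality of the interpolated weighted Strichartz estimate \eqref{inter1}. Recall that $U(t)$ is unitary and $c_0 = \frac{\gamma+n-\alpha}2 - \frac{n+\alpha}q$, with $s_c = \frac{\gamma-\alpha}2$; a direct check shows that the pair $(c,\delta_1) = (c_0 - \tfrac{n+\alpha}q + \tfrac n2 - s_c,\ s_1+s_2)$ together with $q$ in the admissible range of Lemma~\ref{interpolation}(1), so that $|\nabla|^{s_c}d_\omega^{s_1+s_2}U(t)$ maps $|x|^{-c_0}L^{q'}_tL^{q'}_rL^2_\omega$ into $L^\infty_tL^2_x$ by $T^*T$-duality. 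This yields the first inequality
\begin{align*}
\||\nabla|^{s_c}d_\omega^{s_1+s_2}U(t)\Phi_t\|_{L^\infty_tL^2_x} \lesssim \||x|^{-c_0}d_\omega^{s_2}[K_\gamma(|u|^2)u]\|_{L^{q'}_tL^{q'}_rL^2_\omega},
\end{align*}
once one commutes the remaining angular derivative $d_\omega^{s_1}$ through (it is a Fourier multiplier on the sphere, harmless) and absorbs the $|\nabla|^{s_c}$ into the weight exponent $-c_0$ via the homogeneity of $U(t)$; here the relation $c_0 = c_1 + c_2$ is what makes the bookkeeping close.

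The second inequality is the substantive one. I would split $K_\gamma(|u|^2)u$ by putting the angular derivative $d_\omega^{s_2}$ on the two factors via a Leibniz rule on the sphere (the operator $d_\omega = \sqrt{1-\Delta_\omega}$ obeys a fractional Leibniz rule just as $|\nabla|$ does), reducing to estimating $\||x|^{-c_0}[d_\omega^{s_2}K_\gamma(|u|^2)]\,u\|$ plus $\||x|^{-c_0}K_\gamma(|u|^2)\,d_\omega^{s_2}u\|$. For the potential part I would apply the weighted convolution estimate \eqref{weighted1} of Lemma~\ref{weighted con} with $\frac np + d_2 = \gamma$ to gain $\|K_\gamma(|u|^2)\|$ in a weighted mixed-norm space in terms of $\||x|^{-(d_2-d_1)}|u|^2\|_{L^1_rL^{q,1}_\omega}$, then split $|u|^2$ by H\"older in both radius and angle, matching the weight exponents $-(\gamma-\tfrac n{q_1}+c_1)/2$, $-c_2$ and the angular exponents appearing in $\widetilde W_1,\widetilde W_2$. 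Finally a H\"older inequality in time distributes the temporal integrability across the three $u$-factors so that the exponents $2q_1, 2q_1, q_2$ add up to $q'$, which is exactly the content of the constraint $\frac1q = 1 - \frac1{q_1} - \frac1{q_2}$. The explicit choices of $q_1,q_2$ quoted before the lemma are designed so that all the H\"older and Hardy--Littlewood--Sobolev exponents land simultaneously in the admissible ranges.

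The main obstacle I expect is the simultaneous balancing of three independent scalings: the radial weight exponents ($|x|$-powers), the angular regularity exponents ($d_\omega$-powers, constrained by the Lorentz-space condition $\frac1q \le 1 - \frac{d_2}{n-1}$ in Lemma~\ref{weighted con} and by the $\delta_1,\delta_2$ budgets in Lemma~\ref{interpolation}), and the time exponents. Each weighted convolution or Strichartz step consumes part of a fixed ``budget'' ($\frac{n-1}2$ worth of angular smoothing, and a bounded window of admissible $|x|$-powers), and the narrowness of the hypotheses $0 \le s_2 \le \min(\gamma - \tfrac n{q_1}, \tfrac{n-1}{q_1})$ and $\frac{\alpha-\gamma}\alpha < \frac1{q_1} \le \frac{\alpha}{n+\alpha}$ reflects exactly how tight this is. I would therefore carry the proof out by first fixing the time-H\"older split (forced by $q,q_1,q_2$), then the radial-weight split (forced by $c_0 = c_1 + c_2$ and \eqref{weighted1}), and only then verifying that the leftover angular derivatives fit the Lorentz and interpolation constraints; checking those inequalities is the one genuinely delicate computation, but it is routine once the three splittings are fixed in this order.
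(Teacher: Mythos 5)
Your plan follows essentially the same route as the paper's proof: duality of the weighted Strichartz estimate \eqref{inter1} with the weight chosen so that the $|\nabla|$-power equals $s_c$, splitting the radial weight via $c_0=c_1+c_2$, a Leibniz rule and Sobolev embedding on the sphere, the weighted convolution estimate \eqref{weighted1} for the Hartree potential, and a final H\"older in time with $\frac1{2q_1}+\frac1{2q_1}+\frac1{q_2}=\frac1{q'}$. One small correction to your bookkeeping: the right choice in Lemma \ref{interpolation}(1) is simply $c=c_0$ (which makes the $|\nabla|$-exponent $c_0+\frac{n+\alpha}{q}-\frac n2$ equal to $s_c$) with the angular index matched to $s_1$ so that $d_\omega^{s_2}$ is what remains on the nonlinearity; nothing is ``absorbed via homogeneity of $U(t)$,'' and the pair $(c,\delta_1)$ you wrote does not evaluate to this.
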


\begin{proof}
By the dual estimate of \eqref{inter1} and Strichartz estimate
\eqref{homo str} we have
$$
\|\int_0^t U(-t') K_\gamma(|u|^2)u(t')\,dt'\|_{L_t^\infty L^2_x}
\lesssim
\||x|^{-c_0}|\nabla|^{-s_c}d_\omega^{-s_1}(K_\gamma(|u|^2)u)\|_{L^{q'}_tL^{q'}_rL^2_\omega},
$$
which implies
$$\||\nabla|^{s_c}d_\omega^{s_1 + s_2}U(t)\Phi_{t}\|_{L^\infty_t L^2_x} \lesssim \||x|^{-c_0}d_\omega^{s_2}(K_\gamma(|u|^2)u)\|_{L^{q'}_tL^{q'}_rL^2_\omega}.$$
Since $d_\omega^{s_2}$ commutes with radial function $\psi$ and $|x|^{-c_0}$, we obtain
$$
\||x|^{-c_0}d_\omega^{s_2}[(|x|^{-\gamma}*|u|^2)u]\|_{L^{q'}_tL^{q'}_rL^2_\omega}
\lesssim \|d_\omega^{s_2}[|x|^{-c_1}(|x|^{-\gamma}*|u|^2)
|x|^{-c_2}u]\|_{L^{q'}_tL^{q'}_rL^2_\omega}.
$$
Now by Leibniz rule on the unit sphere with $1/q' = 1/q_1 + 1/q_2 $
\begin{align}\begin{aligned}\label{leib1}
&\||x|^{-c_0}d_\omega^{s_2}[(|x|^{-\gamma}*|u|^2)u]\|_{L^{q'}_tL^{q'}_rL^2_\omega}\\
\lesssim\;&
\||x|^{-c_1}d_\omega^{s_2}(|x|^{-\gamma}*|u|^2)\|_{L^{q_1}_{t,x}}\||x|^{-c_2}d_\omega^{s_2}u\|_{L^{q_2}_tL^{q_2}_rL^{\widetilde{q_2}}_\omega},
\end{aligned}\end{align}
where $1/2 = 1/q_1 + 1/\widetilde{q_2} - s_2/(n-1)$. Here we need $0
\le s_2 \le \frac{n-1}{q_1}$. By using Sobolev imbedding on the unit
sphere, we obtain
\begin{align}\label{leib2}\||x|^{-c_2}d_\omega^{s_2}u\|_{L^{q_2}_tL^{q_2}_rL^{\widetilde{q_2}}_\omega}
\lesssim \||x|^{-c_2}d_\omega^{\frac
{n-1}{q_1}}u\|_{L^{q_2}_tL^{q_2}_rL^{2}_\omega}.\end{align}

Since $d_\omega^{s_2}$ also commutes with the convolution operator
$|x|^{-\gamma}*$, we have
$$\||x|^{-c_1}d_\omega^{s_2}(|x|^{-\gamma}*|u|^2)\|_{L^{q_1}_{x}} = \||x|^{-c_1}(|x|^{-\gamma}*(d_\omega^{s_2}(|u|^2)))\|_{L^{q_1}_{x}}\;\;\mbox{a.e.}\;\;t.$$
By using the weighted convolution estimate \eqref{weighted1}, we get
$$
\||x|^{-c_1}(|x|^{-\gamma}*(d_\omega^{s_2}(|u|^2)))\|_{L^{q_1}_{x}}\\
\lesssim
\||x|^{-\widetilde{c_1}}d_\omega^{s_2}(|u|^2)\|_{L^1_rL^{\frac{n-1}{n
- 1 - (\gamma - \frac n{q_1})},\,1}_\omega},
$$
where $\widetilde{c_1} = \gamma - \frac n{q_1} + c_1$. Since $s_2
\le \gamma - \frac{n}{q_1} < n-1 - \gamma + \frac{n}{q_1}$, the
Leibniz rule on the unit sphere gives
\begin{align*}
&\quad\, \left\||x|^{-\widetilde{c_1}}d_\omega^{s_2}(|u|^2)\right\|_{L^1_rL^{\frac{n-1}{n - 1 - (\gamma - \frac n{q_1})},\,1}_\omega}\\
&\lesssim
\left\||x|^{-\frac{\widetilde{c_1}}2}d_\omega^{s_2}u\right\|_{L^2_rL^{\frac{2(n-1)}{n
- 1 - (\gamma - \frac n{q_1} -
s_2)},\,2}_\omega}\left\||x|^{-\frac{\widetilde{c_1}}2}
u\right\|_{L^2_rL^{\frac{2(n-1)}{n - 1 - (\gamma - \frac n{q_1} +
s_2)},\,2}_\omega}.
\end{align*}
Using the Sobolev embedding on the sphere again, we obtain
\begin{align*}
\left\||x|^{-\widetilde{c_1}}d_\omega^{s_2}(|u|^2)\right\|_{L_t^{q_1}L^1_rL^{\frac{n-1}{n
- 1 - (\gamma - \frac n{q_1})},\,1}_\omega}\lesssim
\left\||x|^{-\frac{\widetilde{c_1}}2}d_\omega^{(\gamma - \frac
n{q_1} + s_2)/2}u\right\|_{L_t^{2q_1}L^2_x}^2.
\end{align*}
Combining this with \eqref{leib1} and \eqref{leib2}, we get the
desired estimate.
\end{proof}

If we further restrict the range of $q_1, q_2$, then we can handle
the weighted norms of \eqref{general duha1} in a closed form through the Christ-Kiselev lemma (for instance see
\cite{chriki, tao, ahncho}), which is stated as follows:
\begin{lemma}[Christ-Kiselev lemma]\label{chriki}
Let $1 \le r < q \leq \infty$, and $X, Y$ be Banach spaces. Suppose
that
$$\|U(t)\phi\|_{L^q_t(Y)} \lesssim \|\phi\|_{L_x^2}\;\; \text{ and }\;\; \|\int_0^\infty U(-t')g(t')dt'\|_{L_x^2} \lesssim \|g\|_{L^r_t(X)}.$$
Then
$$\|\int_0^t U(t-t')g(t')dt'\|_{L^q_t(Y)} \lesssim \|g\|_{L_t^r(X)}.$$
\end{lemma}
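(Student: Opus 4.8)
The statement is the classical Christ--Kiselev lemma, and I would give the standard dyadic decomposition proof (cf.\ \cite{chriki, tao, ahncho}). The starting point is that composing the two hypotheses bounds the \emph{untruncated} operator
\begin{align*}
Rg(t) := \int_0^\infty U(t-t')g(t')\,dt' = U(t)\Big(\int_0^\infty U(-t')g(t')\,dt'\Big)
\end{align*}
from $L^r_t(X)$ into $L^q_t(Y)$. The goal is to transfer this bound to the causal operator $\widetilde R g(t) = \int_0^t U(t-t')g(t')\,dt'$, whose integration runs over the triangle $\{(t,t') : 0 \le t' < t\}$. The plan is to write that triangle as a countable, essentially disjoint union of rectangles; on each rectangle the operator factors as a restriction in $t'$, followed by $R$, followed by a restriction in $t$, and hence inherits the bound of $R$; and the gain obtained on each dyadic generation --- convergent precisely because $r < q$ --- lets one sum.

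\emph{Set-up.} One may assume $\|g\|_{L^r_t(X)} < \infty$ (otherwise there is nothing to prove) and, by homogeneity, $\|g\|_{L^r_t(X)} = 1$; a density argument reduces to $g$ bounded with compact support in $t$. Let $\mu(t) = \int_0^t \|g(s)\|_X^r\,ds$: a continuous nondecreasing function with $\mu(0) = 0$ and $\mu(t) \uparrow 1$, which pushes the measure $\|g(t')\|_X^r\,dt'$ forward to Lebesgue measure on $[0,1)$. For a dyadic subinterval $Q \subset [0,1)$ of generation $j \ge 0$, with left and right halves $Q_-$ and $Q_+$, set $E_Q^\pm = \{t : \mu(t) \in Q_\pm\}$ and
\begin{align*}
T_Q g(t) = \mathbf{1}_{E_Q^+}(t)\int_{E_Q^-} U(t-t')g(t')\,dt'.
\end{align*}
Every pair $t' < t$ with $\mu(t') < \mu(t)$ belongs to $E_Q^- \times E_Q^+$ for exactly one dyadic $Q$, while pairs with $\mu(t') = \mu(t)$ carry none of the mass of $g$ (on such a time range $g$ vanishes a.e.); hence $\widetilde R g = \sum_Q T_Q g$, and for each fixed $t$ at most one term of each generation is nonzero.

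\emph{The two estimates.} For a fixed $Q$ of generation $j$ one has $T_Q g(t) = \mathbf{1}_{E_Q^+}(t)\,U(t) h_Q$ with $h_Q = \int_0^\infty U(-t')\big(g(t')\,\mathbf{1}_{E_Q^-}(t')\big)\,dt'$. The second hypothesis gives $\|h_Q\|_{L^2_x} \lesssim \|g\,\mathbf{1}_{E_Q^-}\|_{L^r_t(X)} = \big(\int_{E_Q^-}\|g(t')\|_X^r\,dt'\big)^{1/r} = |Q_-|^{1/r} = 2^{-(j+1)/r}$, and the first hypothesis gives $\|T_Q g\|_{L^q_t(Y)} \le \|U(\cdot)h_Q\|_{L^q_t(Y)} \lesssim \|h_Q\|_{L^2_x}$; thus $\|T_Q g\|_{L^q_t(Y)} \lesssim 2^{-j/r}$. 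Since the sets $E_Q^+$ with $Q$ of generation $j$ are pairwise disjoint in $t$, for $q < \infty$ we get
\begin{align*}
\Big\|\sum_{\mathrm{gen}(Q) = j} T_Q g\Big\|_{L^q_t(Y)}^q = \sum_{\mathrm{gen}(Q) = j}\|T_Q g\|_{L^q_t(Y)}^q \lesssim 2^j\,2^{-jq/r},
\end{align*}
that is, $\big\|\sum_{\mathrm{gen}(Q)=j}T_Q g\big\|_{L^q_t(Y)} \lesssim 2^{j(1/q - 1/r)}$; when $q = \infty$ one replaces the $\ell^q$-sum over $Q$ by a supremum and obtains the same bound. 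Summing over $j \ge 0$ and using $r < q$, i.e.\ $1/q - 1/r < 0$,
\begin{align*}
\|\widetilde R g\|_{L^q_t(Y)} \le \sum_{j \ge 0}\Big\|\sum_{\mathrm{gen}(Q) = j} T_Q g\Big\|_{L^q_t(Y)} \lesssim \sum_{j \ge 0} 2^{j(1/q - 1/r)} \lesssim 1 = \|g\|_{L^r_t(X)},
\end{align*}
which is the desired inequality.

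\emph{Main obstacle.} There is no genuinely hard step: the delicate points are technical, namely making the decomposition $\widetilde R g = \sum_Q T_Q g$ rigorous when $\mu$ has flat stretches or $g$ is not compactly supported --- handled by the density reduction together with the fact that flat portions of $\mu$, and $\mu$-preimages of dyadic rationals, carry none of the mass of $g$ --- and the $q = \infty$ endpoint, where one sums over generations but takes a supremum within each generation. The conceptual content is the geometric gain $2^{j(1/q-1/r)}$ forced by $r < q$, exactly as in the original Christ--Kiselev argument.
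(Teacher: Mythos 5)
Your argument is correct, but note that the paper itself does not prove this lemma: it is quoted as a known result, with the proof deferred to the cited references (Christ--Kiselev, Tao, Ahn--Cho), since only the statement is needed to run the contraction argument in Section 6. What you have written is precisely the standard proof from those references: compose the two hypotheses to bound the untruncated operator, push the mass of $\|g(t)\|_X^r$ forward by $\mu$ to $[0,1)$, perform the Whitney-type dyadic decomposition of the triangle $\{\mu(t')<\mu(t)\}$ into products $E_Q^-\times E_Q^+$, bound each piece by $|Q_-|^{1/r}$ via the dual estimate and the group property $U(t-t')=U(t)U(-t')$, use disjointness of the supports $E_Q^+$ within a generation to get the factor $2^{j(1/q-1/r)}$, and sum the geometric series using $r<q$; your treatment of the $q=\infty$ case and of the flat stretches of $\mu$ is also the usual one. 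The only point stated slightly too quickly is the coverage of times $t$ with $\mu(t)=1$ (e.g.\ beyond the support of $g$ after the density reduction): with half-open dyadic subintervals of $[0,1)$ these $t$ lie in no $E_Q^+$, so one should either close the rightmost half-intervals at $1$ or treat such $t$ directly via the bound on the untruncated operator; this is a one-line convention fix, not a gap in the method.
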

Now we consider weighted estimates for Duhamel part.

\begin{lemma}\label{general duha2}
Let $s_1 = \frac 2q - \frac{\gamma + 1 - \alpha}2$ and $\max(\gamma
- \frac{n+1}{q_1} + 3 - \alpha - \frac{4}{q}, \frac{n+1}{q_1} -
\frac {\alpha}2) \le s_2 \le \min(\gamma - \frac n{q_1},
\frac{n-1}{q_1})$. Suppose $\frac {\alpha - \gamma}{\alpha -1} <
\frac 1{q_1} \leq \frac {\alpha}{n + \alpha}$ and $\frac {\alpha -
\gamma + 1}{2\gamma} < \frac 1{q_2} \leq \frac 12$. Then we have
$$\widetilde W_1(U(t)\Phi_t) + \widetilde W_2(U(t)\Phi_t) \lesssim \widetilde W_1(u)^2\widetilde W_2(u).$$
\end{lemma}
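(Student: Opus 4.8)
The plan is to estimate the weighted norms $\widetilde W_1$ and $\widetilde W_2$ of the Duhamel term $U(t)\Phi_t$ by combining the Christ--Kiselev lemma (Lemma \ref{chriki}) with the dual formulation of the interpolated weighted Strichartz estimates of Lemma \ref{interpolation}, reducing everything to the frozen-in-time bound already established in Lemma \ref{general duha1}. Concretely, recall that
$$\widetilde W_1(U(t)\Phi_t) = \||x|^{-(\gamma - \frac n{q_1} + c_1)/2}d_\omega^{(\gamma - \frac n{q_1} + s_2)/2}U(t)\Phi_t\|_{L^{2q_1}_tL^2_x},$$
so I need to verify that the operator $\varphi \mapsto |x|^{a_1}|\nabla|^{b_1}d_\omega^{\delta_1}U(t)\varphi$ with the exponents $a_1 = -(\gamma - \frac n{q_1} + c_1)/2$, appropriate $b_1$, $\delta_1 = (\gamma - \frac n{q_1} + s_2)/2$, and time exponent $2q_1$ falls under the scale of \eqref{inter1} or \eqref{inter2}. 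The hypotheses $\frac{\alpha-\gamma}{\alpha-1} < \frac1{q_1} \le \frac{\alpha}{n+\alpha}$ are exactly what makes the admissibility window $-\frac n{2q_1} < a_1 < -\frac n{2q_1} + \frac{n-1}2$ (and the companion angular-derivative constraint $\delta_1 \le -\frac n{2q_1} + \frac{n-1}2 - a_1$) hold, once one substitutes the definitions of $c_0, c_1, c_2$ and $s_c$; similarly $\frac{\alpha-\gamma+1}{2\gamma} < \frac1{q_2} \le \frac12$ governs the analogous check for $\widetilde W_2$, which lives in $L^{q_2}_tL^{q_2}_rL^2_\omega$ and is matched against \eqref{inter1} with $q = q_2$.

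The key steps, in order, would be: (i) write $U(t)\Phi_t = -i\lambda\int_0^t U(t-t')K_\gamma(|u|^2)u(t')\,dt'$ and recognize it as the retarded operator $\int_0^t U(t-t')g(t')\,dt'$ with $g = -i\lambda K_\gamma(|u|^2)u$; (ii) invoke Lemma \ref{chriki} with $Y$ the appropriate weighted mixed-norm space (either $L^{2q_1}_rL^2_\omega$ with weight and derivatives for $\widetilde W_1$, or $L^{q_2}_rL^2_\omega$ for $\widetilde W_2$) and $X$ chosen so that the dual endpoint estimate $\|\int_0^\infty U(-t')g(t')\,dt'\|_{L^2_x} \lesssim \|g\|_{L^r_t(X)}$ is precisely the dual of an $\eqref{inter1}$/$\eqref{inter2}$ estimate — this requires $r < q$ in each case, which is why $\frac1{q'}$ needs to be strictly larger than $\frac1{2q_1}$ and $\frac1{q_2}$, respectively, forcing the extra conditions on the exponent window and in particular the restriction on $s_2$ from below (the term $\gamma - \frac{n+1}{q_1} + 3 - \alpha - \frac4q \le s_2$); (iii) apply the forward estimate from Lemma \ref{general duha1}, namely $\||x|^{-c_0}d_\omega^{s_2}[K_\gamma(|u|^2)u]\|_{L^{q'}_tL^{q'}_rL^2_\omega} \lesssim \widetilde W_1(u)^2\widetilde W_2(u)$ — but now with $q'$ playing the role of the $L^r_t$ integrability on the nonlinearity rather than the dual Strichartz exponent — to close the bound.

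The main obstacle will be the bookkeeping in step (ii): one must check that for \emph{both} weighted norms $\widetilde W_1$ and $\widetilde W_2$ of the output, there is a genuine gap $r < q$ between the time-integrability of the nonlinearity (which is $L^{q'}$, fixed by Lemma \ref{general duha1}) and the time-integrability of the target space ($2q_1$ for $\widetilde W_1$, $q_2$ for $\widetilde W_2$). Since $\frac1{q'} = \frac1{q_1} + \frac1{q_2}$ by the Leibniz splitting, the requirement $q' < 2q_1$ is equivalent to $\frac1{q_2} < \frac1{2q_1} + \frac1{q_1} = \frac3{2q_1}$, and $q' < q_2$ is equivalent to $\frac1{q_1} < \frac1{q_2'} $, i.e. $\frac1{q_1}+\frac1{q_2} < 1$; both must be extracted from the explicit formulas for $1/q_1$ and $1/q_2$ together with the assumed ranges. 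I also need to confirm that the weight exponent $a_2 = -c_2$ and angular index $\frac{n-1}{q_1}$ appearing in $\widetilde W_2$ satisfy the admissibility constraints of \eqref{inter1} with $q = q_2$ — this is where $\frac{\alpha-\gamma+1}{2\gamma} < \frac1{q_2}$ enters — and that the lower bound $\frac{n+1}{q_1} - \frac\alpha2 \le s_2$ is what is needed for the $\widetilde W_1$ angular derivative $\delta_1 = (\gamma - \frac n{q_1} + s_2)/2$ to stay within its allowed range after interpolation. Once all these inequalities are verified from the explicit exponents, the conclusion $\widetilde W_1(U(t)\Phi_t) + \widetilde W_2(U(t)\Phi_t) \lesssim \widetilde W_1(u)^2\widetilde W_2(u)$ follows by simply chaining (i)--(iii).
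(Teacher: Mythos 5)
Your plan coincides with the paper's proof: dualize the weighted Strichartz estimates of Lemma \ref{interpolation} (with $c=c_0$) to bound $\int_0^\infty U(-t')K_\gamma(|u|^2)u\,dt'$ in $L^2_x$, pass from the untruncated to the retarded Duhamel operator via the Christ--Kiselev lemma (targeting \eqref{inter1} with $c=-c_2$ for $\widetilde W_2$ and \eqref{inter2} for $\widetilde W_1$), shift the angular regularity by $s_1+s_2$, and close with the nonlinear estimate of Lemma \ref{general duha1}. One bookkeeping caveat: since $\frac1{q'}=\frac1{q_1}+\frac1{q_2}$, the Christ--Kiselev gap conditions $q'<q_2$ and $q'<2q_1$ hold trivially (your stated equivalences for them are off), and the two lower bounds on $s_2$ do not come from that gap but are precisely what makes the angular indices produced after the shift dominate $(\gamma-\frac n{q_1}+s_2)/2$ and $\frac{n-1}{q_1}$ in $\widetilde W_1$ and $\widetilde W_2$, respectively.
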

\begin{proof}
From the dual estimates of \eqref{inter1} with $c = c_0$ it follows
that
\begin{align}\begin{aligned}\label{dual2}
\|\int_0^\infty U(-t') K_\gamma(|u|^2)u(t')\,dt'\|_{L^2_x}\lesssim
\||x|^{-c_0}|\nabla|^{-s_c}d_\omega^{-s_1}[K_\gamma(|u|^2)u]\|_{L^{q'}_tL^{q'}_rL^2_r}.
\end{aligned}\end{align}
Since $q' < q_2$, by Lemma \ref{chriki} together with, \eqref{inter1} with $c = -c_2$ and
\eqref{dual2} we have
\begin{align}\begin{aligned}\label{ck1}
&\quad\;  \||x|^{-c_2}|\nabla|^{-s_c}d_\omega^{\frac 2{q_2} + \frac{\gamma-3}2}U(t)\Phi_t\|_{L^{q_2}_tL^{q_2}_rL^2_\omega}\\
&\lesssim
\||x|^{-c_0}|\nabla|^{-s_c}d_\omega^{-s_1}[K_\gamma(|u|^2)u]\|_{L^{q'}_tL^{q'}_rL^2_r},
\end{aligned}\end{align}
which implies
\begin{align*}
\||x|^{-c_2}d_\omega^{\frac2{q_2} + \frac{\gamma-3}{2} + s_1 +
s_2}U(t)\Phi_t\|_{L^{q_2}_tL^{q_2}_rL^2_\omega} \lesssim
\||x|^{-c_0}d_\omega^{s_2}[K_\gamma(|u|^2)u]\|_{L^{q'}_tL^{q'}_rL^2_r}.
\end{align*}
Since $\frac{n-1}{q_1} \le \frac2{q_2} + \frac{\gamma-3}{2}  + s_1 +
s_2$, we get $\widetilde W_2(U(t)\Phi_t) \lesssim \widetilde
W_1(u)^2\widetilde W_2(u)$.

By a similar way to get \eqref{dual2} and \eqref{ck1} with the
estimates \eqref{inter2} instead of \eqref{inter1} we get
\begin{align*}
&\quad\;\||x|^{-(\gamma - \frac n{q_1} + c_1)/2}|\nabla|^{-s_c}d_\omega^{-(\frac{2-\gamma}2 -\frac{1}{2q_1})} U(t)\Phi_t\|_{L^{2q_1}_tL^2_x}\\
& \lesssim
\||x|^{-c_0}|\nabla|^{-s_c}d_\omega^{-s_1}[K_\gamma(|u|^2)u]\|_{L^{q'}_tL^{q'}_rL^2_\omega}.
\end{align*}
Then by angular regularity shift, we also have
\begin{align*}
&\quad\;\||x|^{-(\gamma - \frac n{q_1} + c_1)/2}d_\omega^{-(\frac{2-\gamma}2 -\frac{1}{2q_1}) + s_1 + s_2} U(t)\Phi_t\|_{L^{2q_1}_tL^2_x}\\
 &\lesssim \||x|^{-c_0}d_\omega^{s_2}[K_\gamma(|u|^2)u]\|_{L^{q'}_tL^{q'}_rL^2_\omega},
\end{align*}
which implies $\widetilde W_1(U(t)\Phi_t) \lesssim \widetilde
W_1(u)^2\widetilde W_2(u)$ because $(\gamma-\frac n{q_1} + s_2)/2 <
-(\frac{2-\gamma}2 -\frac{1}{2q_1}) + s_1 + s_2$ for $s_2$ as
stated. This completes the proof of Lemma \ref{general duha2}.
\end{proof}

We note that $\max(\frac {n+1}{q_1} - \frac {\alpha}2, \gamma + 3 -
\alpha + \frac {n+1}{q_1} - \frac {4}{q})$ is strictly less than
$\min(\frac {n-1}q, \gamma - \frac {n}{q_1})$. So, one can find a
common $s_2$ which meets the condition of Theorem $\ref{belowl2thm}$
and the requirements of Lemmas $\ref{general duha1}$, $\ref{general
duha2}$.

Now we are ready to prove Theorem \ref{belowl2thm}.
\subsection{Proof of Theorem \ref{belowl2thm}}
For $\varepsilon >
0$, let us define function space $B_\rho$ by
\begin{align*}
B_\rho \equiv \{ u \in C(\mathbb{R};\dot{H}^{s_c}H_\omega^{s_1 +
s_2})\ :\  \|u\|_B \le \rho\},
\end{align*}
where
\begin{align*}
\|u\|_B &= \||\nabla|^{s_c}d_\omega^{s_1+s_2}u\|_{L_t^\infty L_x^2}
+ \widetilde W_1(u) + \widetilde W_2(u).
\end{align*}
Then the set $B_\rho$ is a complete metric space endowed with
the metric
\begin{align*}
d_B(u,v)\ \equiv\
&\||\nabla|^{s_c}d_\omega^{s_1+s_2}(u-v)\|_{L_t^\infty L_x^2} +
\widetilde W_1(u-v) + \widetilde W_2(u-v).
\end{align*}
Now we define
$$\mathcal{N}(u) = U(t)(\varphi + \Phi_t) \;\;\mbox{on}\;\; B_\rho.$$
and show the mapping $\mathcal N$ is a contraction mapping
from $B_\rho$ to itself for a sufficiently small $\rho$.

First, from Lemma \ref{interpolation} it follows that
\begin{align}\begin{aligned}\label{linear-weighted-gl}
&\||\nabla|^{s_c}d_\omega^{s_1+s_2}U(t)\varphi\|_{L_t^\infty L_x^2} + \||x|^{-c_2}d_\omega^{\frac{n-1}{q_1}}U(t)\varphi\|_{L_t^{q_2}L_r^{q_2}L_\omega^2}\\ &+ \||x|^{-(-\frac{n}{q_1} + \gamma + c_1)/2}d_\omega^{(\gamma - \frac{n}{q_1} + s_2)/2}U(t)\varphi\|_{L_t^{2q_1}L_x^2} \lesssim \||\nabla|^{s_c}d_\omega^{s_1+s_2}\varphi\|_{L_x^2}.\\
\end{aligned}\end{align}
On the other hand, for any $u, v \in B_\rho$ we have for any $a,
\beta \in \mathbb R$
\begin{align*}
&\quad \left||x|^ad_\omega^\beta[|x|^{-\gamma}*(|u|^2)u)]  - |x|^ad_\omega^\beta[|x|^{-\gamma}*(|v|^2)v)]\right|\\
&\leq \left||x|^ad_\omega^\beta[|x|^{-\gamma}*(|u|^2)(u - v)]\right|\\
&\quad + \left||x|^ad_\omega^\beta[|x|^{-\gamma}*((u -
v)\bar{v})v]\right| +
\left||x|^ad_\omega^\beta[|x|^{-\gamma}*(u\overline{(u-v)})v]\right|.
\end{align*}
Then by adopting the arguments such as duality, Strichartz estimate,
and Christ-Kiselev lemma, as in the proofs of Lemmas \ref{general
duha1}, \ref{general duha2} we obtain the following.
\begin{align}\label{nonrad-contr}
d_B(\mathcal{N}(u), \mathcal{N}(v)) \lesssim (\widetilde W_1(u)
+ \widetilde W_2(u) + \widetilde W_1(v) + \widetilde W_2(v))^2d_B(u,
v).
\end{align}
Therefore
\begin{align}\label{nonlinear-weighted-gl}
&d_B(\mathcal{N}(u), \mathcal{N}(v)) \le C\rho^2 d_B(u, \,v)
\end{align}
for some constant $C$ independent of $u, v, \rho$.
 Now choose $\rho$ and the size of the norm $\|\varphi\|_{\dot
H^{s_c}H_\omega^{s_1+s_2}}$ small enough to ensure that $C\rho^2
\le \frac12$ and $C\|\varphi\|_{\dot H^{s_c}H_\omega^{s_1+s_2}} \le
\frac12 \rho$. Then combining \eqref{linear-weighted-gl} and
\eqref{nonlinear-weighted-gl}, we conclude that the mapping $\mathcal{N}$
becomes a contraction on $B_\rho$.

Now we show the existence of scattering. Let us define functions $\varphi_+$ by
$$\varphi_+ = \varphi - i\lambda\int_0^{\infty} U(-t')[K_\gamma(|u|^2)u](t')\,dt'.$$
Then by the estimates \eqref{nonrad-contr},  $\varphi_{\pm} \in \dot
H^{s_c}H^{s_1 + s_2}_\omega$ and
$$\|u(t) - U(t)\varphi_+\|_{\dot H^{s_c}H^{s_1 + s_2}_\omega} \rightarrow 0 \text{ as } t\rightarrow \infty.$$
This completes the proof of Theorem \ref{belowl2thm}.

\section{Finite time blowup}
In this section we consider the blowup dynamics of massive focusing mass critical FNLS
($m > 0$, $\gamma = \alpha, \lambda = -1$). For this purpose we
adapt the Virial type argument of \cite{frohlenz2}, in which the evolution of two quantities $\langle u, A u \rangle$ and $\langle u, M u\rangle $ for
$$
A = -\frac i2(\nabla \cdot x + x \cdot \nabla ), \quad M = x \cdot D_m^{2-\alpha} x.$$

It is obvious from Proposition \ref{local} that if $\varphi \in H^k,
k = \max(3, \frac\gamma2)$, then there exists a maximal existence
time $T^* > 0$ and a unique solution $u \in C([0, T^*); H^k) \cap
C^1([0,T^*); H^{k-1})$ of \eqref{main eqn}. If $T^* < \infty$, then
$\lim_{t\nearrow T^*} \|u(t)\|_{H^\frac\gamma2} = \infty$. If
further $x\varphi, |x|\nabla\varphi \in L^2$, then we can show the propagation of
moment: $xu(t), |x|\nabla u(t) \in L^2$ for all $t \in [0, T^*)$. We postpone
the proof to the end of this section.

Now let us introduce our
blowup result.

\begin{theorem}\label{blow}
Set $\gamma = \alpha$ and $m > 0$. Let $1 < \alpha < 2$ and $n \ge
4$. Suppose that $\psi$ is smooth radial function with $\psi'(\rho) = \partial_r \psi(\rho) \le 0$, $|\psi'(\rho)| \lesssim \frac1{\rho}$ for $\rho  > 0$, and $\varphi \in H_{rad}^k$ and $x\varphi, |x|\nabla\varphi \in L_{rad}^2$ with $E(\varphi) < 0$, we have that for each $m$ the maximal existence time $T_m^* \le r_m$ and $\lim_{t\nearrow T_m^*} \|u(t)\|_{H^\frac\gamma2} = \infty$, where $r_m $ is the positive root of
$$
2\alpha^2E(\varphi)t^2 + 2\alpha(\langle \varphi, A \varphi \rangle+ C\|\varphi\|_{L^2}^4)t + \langle \varphi, M \varphi\rangle.
$$
Here $C$ does not depend on $m$.
\end{theorem}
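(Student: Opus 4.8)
The strategy is the classical Glassey--Virial approach, adapted to the massive fractional setting exactly in the spirit of \cite{frohlenz2} and \cite{chkl}. First I would introduce the two quantities $\mathcal{A}(t) = \langle u(t), A u(t) \rangle$ with $A = -\frac{i}{2}(\nabla \cdot x + x \cdot \nabla)$, and $\mathcal{M}(t) = \langle u(t), M u(t)\rangle$ with $M = x \cdot D_m^{2-\alpha} x$. One first checks that $\mathcal{M}(t) \ge 0$ (this is clear since $D_m^{2-\alpha}$ is a positive operator and $M = (D_m^{(2-\alpha)/2} x) \cdot (D_m^{(2-\alpha)/2} x)$), and that the propagation of moment (stated just before the theorem, proved at the end of the section) guarantees that $u(t), xu(t), |x|\nabla u(t) \in L^2$ for $t \in [0,T_m^*)$, so that all the quantities and their time derivatives are well-defined and finite. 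The heart of the argument is then to compute $\frac{d}{dt}\mathcal{M}(t)$ and $\frac{d}{dt}\mathcal{A}(t)$ using the equation \eqref{main eqn}, i.e.\ $i\partial_t u = D_m^\alpha u + F(u)$.

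The key computations are: (i) $\frac{d}{dt}\mathcal{M}(t) = 2\alpha\,\mathcal{A}(t) + (\text{commutator terms})$, where the commutator in question is $[D_m^\alpha, |x|^2 K_\alpha(|u|^2)]$; and (ii) $\frac{d}{dt}\mathcal{A}(t) \le 2\alpha E(\varphi)$, up to terms coming from the inhomogeneity $\psi$ and from the mass $m$. For the first identity one writes $\partial_t \langle u, Mu\rangle = 2\,\mathrm{Im}\langle D_m^\alpha u + F(u), Mu\rangle$ and uses that $M$ commutes in the right way with $D_m^\alpha$ modulo a lower-order commutator; the operator identity $[D_m^\alpha, x \cdot D_m^{2-\alpha} x] \sim \alpha\, D_m^\alpha$ (the massive analogue of $[|\nabla|^\alpha, x\cdot|\nabla|^{2-\alpha}x] = 2\alpha|\nabla|^\alpha$ type relation) gives the main term $2\alpha \mathcal{A}$. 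The contribution of $F(u) = -K_\alpha(|u|^2)u$ to $\mathrm{Im}\langle F(u), Mu\rangle$ produces the commutator $[D_m^\alpha, |x|^2 K_\alpha(|u|^2)]$, whose $L^2$ operator norm — as announced in the introduction — is bounded by $C\|\varphi\|_{L^2}^4$ with $C$ independent of $m$; this is where radial symmetry of $u$ (and of $\psi$) is used. For the second differential inequality, $\frac{d}{dt}\mathcal{A}(t)$ reduces to a Pohozaev-type identity: the quadratic part $\frac12\langle D_m^\alpha u, u\rangle$ contributes $\alpha K(u)$ (again using the dilation commutator with $D_m$, which for $m>0$ actually produces $\alpha \langle |\nabla|^2(m^2-\Delta)^{\alpha/2-1}u,u\rangle \le \alpha \langle D_m^\alpha u,u\rangle$, so the massive term only helps), and the potential part $V(u) = -\frac14\langle K_\alpha(|u|^2)u,u\rangle$ contributes, after integration by parts against the scaling vector field and using $\gamma = \alpha$, a multiple of $V(u)$ plus an error term controlled by $\psi'$; the hypotheses $\psi' \le 0$ and $|\psi'(\rho)| \lesssim 1/\rho$ ensure this error has the favorable sign (or is absorbable), so that one obtains $\frac{d}{dt}\mathcal{A}(t) \le 2\alpha(K(u) + V(u)) = 2\alpha E(\varphi)$.

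Combining (i) and (ii): from (i), $\frac{d}{dt}\mathcal{M}(t) \le 2\alpha \mathcal{A}(t) + 2\alpha C\|\varphi\|_{L^2}^4$ (using the commutator bound and $L^2$-conservation), and integrating (ii) gives $\mathcal{A}(t) \le \mathcal{A}(0) + 2\alpha E(\varphi) t = \langle\varphi,A\varphi\rangle + 2\alpha E(\varphi)t$. Substituting and integrating once more in time yields
$$
\mathcal{M}(t) \le 2\alpha^2 E(\varphi) t^2 + 2\alpha\big(\langle \varphi, A\varphi\rangle + C\|\varphi\|_{L^2}^4\big) t + \langle \varphi, M\varphi\rangle,
$$
which is precisely \eqref{mom}. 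Since $E(\varphi) < 0$ the right-hand side is a downward parabola in $t$ with positive leading coefficient negated, hence it becomes negative at its positive root $r_m$; but $\mathcal{M}(t) \ge 0$ throughout $[0,T_m^*)$, forcing $T_m^* \le r_m < \infty$. Finally, the blowup alternative from the local theory (stated before the theorem) gives $\lim_{t\nearrow T_m^*}\|u(t)\|_{H^{\gamma/2}} = \infty$. That $C$ is $m$-independent follows because every $m$-dependent term encountered (the dilation commutator of $D_m$, and the potential error) has a definite sign and is discarded in an inequality, never estimated.

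I expect the main obstacle to be step (i), specifically the uniform-in-$m$ bound $\|[D_m^\alpha, |x|^2 K_\alpha(|u|^2)]\|_{L^2 \to L^2} \lesssim \|\varphi\|_{L^2}^4$. One must expand the commutator using a pseudodifferential / kernel representation of $D_m^\alpha$, exploit that $K_\alpha(|u|^2) = (\psi(\cdot)|\cdot|^{-\alpha} * |u|^2)$ together with the Hardy-type bound \eqref{supx frac} (which controls $\|K_\alpha(|u|^2)\|_{L^\infty}$ by $\|u\|_{\dot H^{\alpha/2}}^2$ — but here one wants it in terms of $\|u\|_{L^2}^2$, so radiality and the explicit structure of $|x|^2 K_\alpha$ must be used to gain the two extra derivatives), and track the $m$-dependence of the symbol $(m^2+|\xi|^2)^{\alpha/2}$ carefully to see it contributes no growth. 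The secondary technical point is making the formal time-differentiations of $\mathcal{A}$ and $\mathcal{M}$ rigorous, which is handled by the propagation-of-moment lemma plus a standard approximation/regularization argument (e.g.\ truncating $|x|^2$), as in \cite{chkl}.
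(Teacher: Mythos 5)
Your outline reproduces the paper's architecture: the same two quantities $\langle u, Au\rangle$ and $\langle u, Mu\rangle$, the same two differential inequalities $\frac{d}{dt}\langle u,Au\rangle \le 2\alpha E(\varphi)$ (Lemma \ref{virial-lemma1}, where the mass term and the $\psi'$ term are discarded by sign exactly as you say) and $\frac{d}{dt}\langle u,Mu\rangle \le 2\alpha\langle u,Au\rangle + C\|\varphi\|_{L^2}^4$ (Lemma \ref{virial-lemma2}), two integrations in time, nonnegativity of $\langle u, Mu\rangle$, and the blowup alternative from the local theory. So the skeleton is right and is the paper's own.

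The genuine gap is at the step you flag as the main obstacle, and it is more than bookkeeping. The commutator produced by the nonlinearity is $[K_\alpha(|u|^2), M]$, and your plan (echoing the loose phrasing of the introduction) proposes to bound $[D_m^\alpha, |x|^2K_\alpha(|u|^2)]$ on $L^2$; that commutator has positive order $\alpha-1$ and cannot be $L^2$-bounded in terms of $\|\varphi\|_{L^2}$ alone. The decisive algebraic move in the paper is the rearrangement $[v, M] = [\,|x|^2 v, D_m^{2-\alpha}\,] + (\alpha-2)\bigl(vx\cdot\frac{\nabla}{|\nabla|}|\nabla|D_m^{-\alpha} + |\nabla|D_m^{-\alpha}\frac{\nabla}{|\nabla|}\cdot x v\bigr)$ with $v = K_\alpha(|u|^2)$, so that only a commutator with the low-order operator $D_m^{2-\alpha}$ and terms smoothed by $D_m^{-\alpha}$ appear. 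The uniform-in-$m$ bound then comes from the exact scaling identity $\|[D_m^{2-\alpha},f]\|_{L^2\to L^2} = m^{2-\alpha}\|[D_1^{2-\alpha},f(\cdot/m)]\|_{L^2\to L^2}$, a Calder\'on--Zygmund commutator estimate in the Lipschitz class together with $\|\,|x|^2K_\alpha(|u|^2)\,\|_{\dot{\Lambda}^{2-\alpha}} \lesssim \|\varphi\|_{L^2}^2$ (this is precisely where $|\psi'(\rho)|\lesssim \rho^{-1}$, radiality via the weighted convolution lemma, and the restriction $\alpha<n-2$, hence $n\ge 4$, enter), and the Stein--Weiss inequality for the remaining smoothing terms. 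None of this is supplied by ``expand the commutator in a pseudodifferential representation and track the $m$-dependence of the symbol,'' so the plan leaves the heart of the proof unproved. A minor slip in the same spirit: the identity you invoke, $[D_m^\alpha, M]\sim \alpha D_m^\alpha$, is not the relevant one; the paper computes $[D_m^\alpha, M] = -\alpha(x\cdot\nabla+\nabla\cdot x)$, a multiple of the dilation generator $A$, which is what actually produces the term $2\alpha\langle u, Au\rangle$ that you correctly state.
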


\subsection{Proof of Theorem \ref{blow}}

Let us now show the theorem. We begin with the dilation operator
$$
A = -\frac i2(\nabla \cdot x + x \cdot \nabla ).
$$
Since $u \in H^k$ and $xu, |x|\nabla u \in L^2$, $\langle u, A u\rangle$ is well-defined and so is
\begin{align}\label{commut0}
\frac{d}{dt}\langle u, A u\rangle = i\langle u, [H, A] u\rangle,
\end{align}
where $H = D_m^\alpha + \mathcal V$ and $\mathcal V = -
K_\alpha(|u|^2) = - (\psi/|\cdot|^\alpha) * |u|^2$. Here $[H, A]$
denotes the commutator $HA - AH$. As a matter of fact we have the
following.
\begin{lemma}\label{virial-lemma1}
Let $u$, $\varphi$ and $\psi$ be as above in Theorem \ref{blow}.
Then
\begin{align}\label{1st virial}
\frac{d}{dt}\langle u, A u\rangle \le 2\alpha E_2(\varphi).
\end{align}
\end{lemma}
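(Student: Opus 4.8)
The plan is to start from the commutator identity \eqref{commut0}, write $H=D_m^\alpha+\mathcal V$ with $\mathcal V=-K_\alpha(|u|^2)$, and estimate $i\langle u,[H,A]u\rangle=i\langle u,[D_m^\alpha,A]u\rangle+i\langle u,[\mathcal V,A]u\rangle$ term by term. First I would note that the standing assumptions $\varphi\in H_{rad}^k$ with $k=\max(3,\gamma/2)$ and $x\varphi,|x|\nabla\varphi\in L_{rad}^2$, together with the propagation of moments established at the end of the section, guarantee that $\langle u,Au\rangle$ is differentiable and that \eqref{commut0} holds, so that all the pairings and integrations by parts below are legitimate and the equation $i\partial_t u=Hu$ may be inserted freely.

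For the kinetic part, I would compute $[D_m^\alpha,A]$ on the Fourier side, where $A$ acts as $i\big(\tfrac n2+\xi\cdot\nabla_\xi\big)$ and $D_m^\alpha$ as multiplication by $p(\xi)=(m^2+|\xi|^2)^{\alpha/2}$. A direct calculation then gives $i[D_m^\alpha,A]=(\xi\cdot\nabla_\xi p)(D)$, and since $\xi\cdot\nabla_\xi p=\alpha|\xi|^2(m^2+|\xi|^2)^{\alpha/2-1}=\alpha(m^2+|\xi|^2)^{\alpha/2}-\alpha m^2(m^2+|\xi|^2)^{\alpha/2-1}$, this reads $i[D_m^\alpha,A]=\alpha D_m^\alpha-\alpha m^2 D_m^{\alpha-2}$. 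Because $1<\alpha<2$, the operator $\alpha m^2 D_m^{\alpha-2}$ is a nonnegative bounded Fourier multiplier, so $i\langle u,[D_m^\alpha,A]u\rangle=\alpha\langle D_m^\alpha u,u\rangle-\alpha m^2\|D_m^{(\alpha-2)/2}u\|_{L^2}^2\le 2\alpha K(u)$; this is the step where positivity of the mass correction lets us discard an unwanted term.

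For the potential part, since $\mathcal V$ is multiplication by the real function $V=-K_\alpha(|u|^2)$, one has $i[\mathcal V,A]=-(x\cdot\nabla V)$ as a multiplication operator, hence $i\langle u,[\mathcal V,A]u\rangle=\iint x\cdot\nabla_x\!\big[\psi(x-y)|x-y|^{-\alpha}\big]\,|u(x)|^2|u(y)|^2\,dx\,dy$. I would then symmetrize in $x\leftrightarrow y$: because the kernel depends only on $x-y$ and is radial, hence even, in it, the factor $x\cdot\nabla_x[\cdots]$ may be replaced by $\tfrac12\,z\cdot\nabla_z\big(\psi(z)|z|^{-\alpha}\big)\big|_{z=x-y}=\tfrac12\big(-\alpha\,\psi(x-y)|x-y|^{-\alpha}+\psi'(|x-y|)|x-y|^{1-\alpha}\big)$. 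Since $\gamma=\alpha$ and $\lambda=-1$ give $\iint\psi(x-y)|x-y|^{-\alpha}|u(x)|^2|u(y)|^2=-\langle F(u),u\rangle=-4V(u)$, this produces $i\langle u,[\mathcal V,A]u\rangle=2\alpha V(u)+\tfrac12\iint\psi'(|x-y|)|x-y|^{1-\alpha}|u(x)|^2|u(y)|^2\,dx\,dy\le 2\alpha V(u)$, the inequality following from $\psi'\le 0$.

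Adding the two estimates and using the energy conservation law \eqref{consv} (available here because $\gamma=\alpha\le\alpha$), $\tfrac{d}{dt}\langle u,Au\rangle\le 2\alpha\big(K(u)+V(u)\big)=2\alpha E(u(t))=2\alpha E(\varphi)$, which by the definition of $E_2$ is $2\alpha E_2(\varphi)$, i.e.\ \eqref{1st virial}. I expect the main obstacle to be the functional-analytic bookkeeping rather than the algebra: justifying \eqref{commut0} and the Fourier-side evaluation of $[D_m^\alpha,A]$ on functions of the assumed regularity and decay (e.g.\ by regularization/density), and, most delicately, justifying the symmetrization in the potential term, i.e.\ the absolute convergence of the symmetrized double integral. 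This is precisely where the hypothesis $|\psi'(\rho)|\lesssim 1/\rho$ is used: it forces $\big|z\cdot\nabla_z(\psi(z)|z|^{-\alpha})\big|\lesssim|z|^{-\alpha}$, so the symmetrized kernel inherits the original locally integrable singularity of order $\alpha<n$ and the integral is controlled by Hardy--Littlewood--Sobolev together with the Sobolev embedding $H^{\alpha/2}\hookrightarrow L^{2n/(n-\alpha)}$.
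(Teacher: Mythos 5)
Your proof is correct and follows essentially the same route as the paper: split $i\langle u,[H,A]u\rangle$ into kinetic and potential parts, identify $i[D_m^\alpha,A]=\alpha D_m^\alpha-\alpha m^2 D_m^{\alpha-2}$ (your Fourier-multiplier computation is equivalent to the paper's operator identity $D_m^\alpha x = xD_m^\alpha-\alpha D_m^{\alpha-2}\nabla$), drop the nonnegative mass term, symmetrize the potential term in $x\leftrightarrow y$ to produce $2\alpha V(u)$ plus a $\psi'\le 0$ contribution, and conclude by energy conservation. Your reading of $E_2(\varphi)$ as $E(\varphi)$ matches the bound the paper actually proves.
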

\begin{proof}[Proof of Lemma \ref{virial-lemma1}]
Using the identity $D_m^\alpha x = xD_m^\alpha - \alpha
D_m^{\alpha-2} \nabla$, we have
\begin{align}\label{commut1}
[\,D_m^\alpha, A] = -i \alpha D_m^{\alpha-2}D_0^2.
\end{align}
Similarly,
\begin{align}\label{commut2}
[\,\mathcal V, A] = i (x \cdot \nabla) \mathcal V.
\end{align}
Substituting \eqref{commut1} and \eqref{commut2} into \eqref{commut0}, we get
\begin{align}\label{virial0}
\frac{d}{dt}\langle u, A u\rangle = \alpha\langle u, D_m^\alpha u \rangle - \alpha m^2\langle u, D_m^{\alpha-2}u \rangle - \langle u, (x \cdot \nabla)\mathcal V u\rangle.
\end{align}
For the second term on RHS of \eqref{virial0} we obtain the
following identities:
\begin{align*}
(x \cdot \nabla)\mathcal V &= \alpha \int \frac{\psi(|x-y|)}{|x-y|^{\alpha}}|u(y)|^2\,dy - \int \frac{\psi'(|x-y|)}{|x-y|^{\alpha}}|x-y||u(y)|^2\,dy\\
&\qquad+ \int \left(\alpha \frac{\psi(|x-y|)}{|x-y|^{\alpha+1}} -
\frac{\psi'(|x-y|)}{|x-y|^{\alpha}}\right)\frac{y\cdot
(x-y)}{|x-y|}|u(y)|^2\,dy,
\end{align*}
\begin{align*}
\langle u, (x \cdot \nabla)\mathcal V u\rangle &= - 4\alpha V(u) - \int\!\!\!\int \frac{|x-y|\psi'(|x-y|)}{|x-y|^{\alpha}}|u(x)|^2|u(y)|^2\,dxdy\\
&- \langle u, (x \cdot \nabla)\mathcal V u\rangle,
\end{align*}
which implies
\begin{align*}
\langle u, (x \cdot \nabla)\mathcal V u\rangle &= - 2\alpha V(u) -
\frac12\int\!\!\!\int \frac{|x-y|
\psi'(|x-y|)}{|x-y|^{\alpha}}|u(x)|^2|u(y)|^2\,dxdy.
\end{align*}
Substituting this into \eqref{virial0},
we have
\begin{align*}
\frac{d}{dt}\langle u, A u\rangle \le 2\alpha E(\varphi) +
\frac12\int\!\!\!\int\left(
|x-y|\psi_1'(|x-y|)\right)\frac{|u(x)|^2|u(y)|^2}{|x-y|^\alpha}\,dxdy.
\end{align*}
Since $\psi'(|x|) \le 0$, we get \eqref{1st virial}.
\end{proof}

Next we consider the nonnegative quantity $\langle u, M u \rangle$ with
$$
M \equiv x \cdot D_m^{2-\alpha} x = \sum_{k = 1}^n x_k D_m^{2-\alpha}x_k.
$$
From the regularity and decay condition of $u$ the quantity $\langle u(t), M u(t) \rangle$ is well-defined and finite for all $t \in [0,T^*)$ since $|\langle u, M u\rangle| \lesssim_m  \|xu\|_{L^2}(\|xu\|_{L^2}+ \|x\cdot \nabla u\|_{L^2})$, and so is
\begin{align}\label{virial1}
\frac{d}{dt}\langle u, M u \rangle = i\langle u, [H, M]u\rangle  = i\langle u, [\,D_m^\alpha, M] u\rangle -i\langle u, [K_\alpha(|u|^2), M]u\rangle.
\end{align}
We have the following.
\begin{lemma}\label{virial-lemma2}
With the same condition as in Theorem \ref{blow}, we have
\begin{align}\label{2nd virial}
\frac{d}{dt}\langle u, M u \rangle \le 2\alpha\langle u, A u \rangle + C\|\varphi\|_{L^2}^4,
\end{align}
where $C$ is a positive constant depending only on $n, \alpha$ but not on $m$.
\end{lemma}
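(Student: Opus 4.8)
The starting point is the identity \eqref{virial1}, which I analyse term by term; the propagation of moment recalled before Theorem \ref{blow} ($xu(t),\,|x|\nabla u(t)\in L^2$) makes $\langle u,Mu\rangle$ finite and differentiable in time and legitimizes all the commutator manipulations below. The plan is to show that the kinetic commutator contributes \emph{exactly} $2\alpha\langle u,Au\rangle$ and that the potential commutator contributes an error bounded by $C\|\varphi\|_{L^2}^4$; adding the two gives \eqref{2nd virial}.

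\emph{Kinetic part.} Using the identity $D_m^\alpha x=xD_m^\alpha-\alpha D_m^{\alpha-2}\nabla$ (equivalently $[D_m^\alpha,x_k]=-\alpha D_m^{\alpha-2}\partial_k$) already employed in Lemma \ref{virial-lemma1}, the Leibniz rule for commutators, and the commutativity of $D_m^\alpha$ with $D_m^{2-\alpha}$, I would expand, for $M=\sum_k x_kD_m^{2-\alpha}x_k$,
\begin{align*}
[D_m^\alpha,\,x_kD_m^{2-\alpha}x_k]=[D_m^\alpha,x_k]\,D_m^{2-\alpha}x_k+x_kD_m^{2-\alpha}\,[D_m^\alpha,x_k]=-\alpha\,(\partial_kx_k+x_k\partial_k),
\end{align*}
since $D_m^{\alpha-2}D_m^{2-\alpha}=I$. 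Summing over $k$ gives $[D_m^\alpha,M]=-\alpha(\nabla\cdot x+x\cdot\nabla)=-2i\alpha A$, hence $i\langle u,[D_m^\alpha,M]u\rangle=2\alpha\langle u,Au\rangle$.

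\emph{Potential part.} Since $W:=K_\alpha(|u|^2)=(\psi/|\cdot|^\alpha)*|u|^2$ is a real, radial multiplication operator commuting with each $x_k$, one has $[W,M]=\sum_k x_k[W,D_m^{2-\alpha}]x_k$, so that
\begin{align*}
-i\langle u,[K_\alpha(|u|^2),M]u\rangle=-i\sum_k\langle x_ku,[W,D_m^{2-\alpha}]x_ku\rangle=2\sum_k\mathrm{Im}\,\langle Wx_ku,D_m^{2-\alpha}x_ku\rangle.
\end{align*}
The task is to bound this by $C\|u(t)\|_{L^2}^4=C\|\varphi\|_{L^2}^4$ with $C=C(n,\alpha)$; equivalently, to control the $L^2\!\to\!L^2$ operator norm of $[W,M]$ (the object written $[D_m^\alpha,|x|^2K_\alpha(|u|^2)]$ in the Introduction) by $\|u\|_{L^2}^4$. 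I would use the subordination formula $D_m^{2-\alpha}=(D_m^2)^{1-\alpha/2}=\kappa_\alpha\int_0^\infty t^{-\alpha/2}D_m^2(D_m^2+t)^{-1}\,dt$ (with $\kappa_\alpha>0$ explicit), which gives
\begin{align*}
[W,D_m^{2-\alpha}]=\kappa_\alpha\int_0^\infty t^{1-\alpha/2}\,(D_m^2+t)^{-1}\,[W,-\Delta]\,(D_m^2+t)^{-1}\,dt,
\end{align*}
where $[W,-\Delta]$ is a first-order operator with coefficients $\nabla W$ (after writing $\Delta W=\nabla\cdot(\nabla W)-\nabla W\cdot\nabla$, so that no second derivative of $\psi$ is needed). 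Radial symmetry of $u$ (hence of $W$), together with $\psi'\le0$ and $|\psi'(\rho)|\lesssim1/\rho$, gives $\nabla W=(\partial_rW)\,x/|x|$ and the pointwise bound $|\nabla W(x)|\lesssim(|\cdot|^{-\alpha-1}*|u|^2)(x)$, the kernel being locally integrable precisely because $n\ge4$, $\alpha<2$. One then estimates the $t$-integral using the uniform resolvent bounds $\|(D_m^2+t)^{-1}\|\le(m^2+t)^{-1}$ and $\||\nabla|(D_m^2+t)^{-1}\|\le\tfrac12(m^2+t)^{-1/2}$; the integral converges at $t=\infty$ exactly because $1-\alpha/2<1/2$, i.e.\ $\alpha>1$, and radial Hardy/Newton-type inequalities such as $\int_{|y|\le|x|}|u|^2\le\|u\|_{L^2}^2$ are what let the two weights $x_k$ from $M$ be absorbed against the decay of $\nabla W$ in a scale-invariant fashion, producing two factors $\|u\|_{L^2}^2$ and a constant independent of $m\ge0$ --- which is why $C$ in \eqref{mom} does not depend on $m$.

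\emph{Main obstacle.} The crux is precisely this last estimate: carrying the two weights $x_k$ coming from $M$ through the resolvents and the Hartree convolution while ending with only $L^2$ norms of $u$ and no dependence on $m$. This is exactly where the full combination of hypotheses --- radial symmetry, $n\ge4$, $1<\alpha<2$, $\psi'\le0$, $|\psi'|\lesssim1/\rho$ --- is needed. Granting it, \eqref{virial1} reads
\begin{align*}
\frac{d}{dt}\langle u,Mu\rangle=2\alpha\langle u,Au\rangle-i\langle u,[K_\alpha(|u|^2),M]u\rangle\le2\alpha\langle u,Au\rangle+C\|\varphi\|_{L^2}^4,
\end{align*}
which is \eqref{2nd virial}.
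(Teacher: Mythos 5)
Your kinetic computation is fine and coincides with the paper's: $[D_m^\alpha,M]=-\alpha(x\cdot\nabla+\nabla\cdot x)$, giving exactly the term $2\alpha\langle u,Au\rangle$, and the algebraic reduction $-i\langle u,[K_\alpha(|u|^2),M]u\rangle=2\sum_k\mathrm{Im}\,\langle Wx_ku,D_m^{2-\alpha}x_ku\rangle$ is legitimate given the moment propagation. But the lemma lives or dies on the bound $|\langle u,[K_\alpha(|u|^2),M]u\rangle|\le C\|\varphi\|_{L^2}^4$ with $C$ independent of $m$, and this you do not prove: you explicitly write ``Granting it'' and defer the whole estimate to a sketch. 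As sketched, the route would not close. After the subordination formula, the resolvent bounds you quote must be paired with some norm of $[W,-\Delta]$, i.e.\ of $\nabla W$; but $\|\nabla W\|_{L^\infty}$ is \emph{not} controlled by $\|u\|_{L^2}^2$ --- the weighted convolution estimate (Lemma \ref{weighted con}, radial case) only gives $|x|^{\alpha+1}|\nabla W(x)|\lesssim\|u\|_{L^2}^2$. So the decay of $\nabla W$ is available only with a spatial weight, while your two external factors $x_k$ sit on $u$ and do not commute with the resolvents $(D_m^2+t)^{-1}$; the appeal to ``radial Hardy/Newton-type inequalities that absorb the weights in a scale-invariant fashion'' is precisely the missing argument, not a justification of it. Uniformity in $m$ (and the convergence of the $t$-integral near $t=0$ when $m$ is small) is also left unaddressed.

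The paper resolves exactly this difficulty by a different decomposition: instead of $\sum_k x_k[W,D_m^{2-\alpha}]x_k$, it writes $[W,M]=[\,|x|^2K_\alpha(|u|^2),D_m^{2-\alpha}]+(\alpha-2)\bigl(Wx\cdot\tfrac{\nabla}{|\nabla|}|\nabla|D_m^{-\alpha}+|\nabla|D_m^{-\alpha}\tfrac{\nabla}{|\nabla|}\cdot xW\bigr)$, so that the weight $|x|^2$ is attached to the potential. Because $\gamma=\alpha$, $\psi$ is radial with $|\psi'(\rho)|\lesssim\rho^{-1}$, $u$ is radial and $n\ge4$, the function $f=|x|^2K_\alpha(|u|^2)$ satisfies $|\nabla f(z)|\lesssim|z|^{1-\alpha}\|u\|_{L^2}^2$, hence $\|f\|_{\dot\Lambda^{2-\alpha}}\lesssim\|\varphi\|_{L^2}^2$; a Calder\'on--Zygmund commutator theorem (after rescaling $m\to1$, which is where the $m$-independence comes from) then bounds $\|[D_m^{2-\alpha},f]\|_{L^2\to L^2}\lesssim\|\varphi\|_{L^2}^2$, and the remaining first-order terms are handled by the weighted convolution estimate together with the Stein--Weiss inequality. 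That entire analysis (Section 7.2 of the paper) is the substance of the lemma; without it, or a worked-out substitute for your weighted resolvent estimate, your argument has a genuine gap at its central step.
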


Theorem \ref{blow} follows immediately from Lemmas \ref{virial-lemma1} and \ref{virial-lemma2}.

\begin{proof}[Proof of Lemma \ref{virial-lemma2}]
Using the identity $D_m^\alpha x = xD_m^\alpha  - \alpha D_m^{\alpha-2}  \nabla$, we first have the estimate:
\begin{align*}
[\,D_m^\alpha, M] = D_m^\alpha xD_m^{2-\alpha} x - xD_m^{2-\alpha}x D_m^\alpha = -\alpha(x\cdot \nabla + \nabla \cdot x).
\end{align*}

For a smooth function $v$ we get
\begin{align*}
[v, M] &= v x D_m^{2-\alpha} x - xD_m^{2-\alpha}x v\\
 &= v|x|^2D_m^{2-\alpha} - (2-\alpha)vx\cdot \nabla D_m^{-\alpha} - D_m^{2-\alpha}|x|^2v + (2-\alpha)D_m^{-\alpha}\nabla \cdot x v\\
 &= [\,|x|^2v, D_m^{2-\alpha}]  + (\alpha-2)\left(vx\cdot \frac{\nabla}{|\nabla|}|\nabla|D_m^{-\alpha} + |\nabla|D_m^{-\alpha}\frac{\nabla}{|\nabla|} \cdot x v\right).
\end{align*}
By density we may replace $v$ with $K_\alpha(|u|^2)$. We will show in the next section
\begin{align}\label{commut3}
|\langle u,[\,|x|^2K_\alpha(|u|^2), D_m^{2-\alpha}]u \rangle| \lesssim \|\varphi\|_{L^2}^4.
\end{align}
By the convolution estimate, Lemma \ref{weighted con} in case when $p = \infty$, $d_1 =  d_2 = \gamma$ and $f$ is radial, one have
\begin{align}\begin{aligned}\label{commut4}
&\qquad |\langle u, \left(vx\cdot \frac{\nabla}{|\nabla|}|\nabla| D_m^{-\alpha} + |\nabla|D_m^{-\alpha}\frac{\nabla}{|\nabla|} \cdot x v\right)u\rangle| \\
&\lesssim \|\psi\|_{L^\infty}\|\varphi\|_{L^2}^2\int |u(x)||x|^{-(\alpha -1)}\int|x-y|^{-(n-(\alpha -1))}|(\frac{\nabla}{|\nabla|}u)(y)|dydx
\end{aligned}\end{align}
To estimate this, we make use of the Stein-Weiss
inequality \cite{s-w}: for $f \in L^p$ with $1 < p < \infty$, $0 <
\lambda < n$, $\beta < \frac np$, and $n = \lambda + \beta$
\begin{align}\label{s-w ineq}
\||x|^{-\beta} (|\cdot|^{-\lambda}*f)\|_{L^p} \lesssim \|f\|_{L^p}.
\end{align}
Applying \eqref{s-w ineq} with $p = 2$, $\beta = \alpha - 1$ and $\lambda = n-(\alpha-1)$, \eqref{commut4} is bounded by $C\|\varphi\|_{L^2}^4$.

\end{proof}

\subsection{$L^2$ boundedness of commutator}
We show the commutator estimate \eqref{commut3}. We set $f = |x|^2K_\alpha(|u|^2)
$. From a simple calculation we observe that
\begin{align}\label{comm-m}
[D_m^{2-\alpha}, f]u(x) = m^{n+2-\alpha}\left([D_1^{2-\alpha},
f\left(\frac{\cdot}{m}\right)]u_m\right)(m x),
\end{align}
where $u_m(x) = m^{-n}u(\frac{x}{m})$. Thus we have the identity of
the operator norms
$$
\|[D_m^{2-\alpha}, f]\|_{L^2 \to L^2} =
m^{2-\alpha}\|[D_1^{2-\alpha}, f(\cdot/m)]\|_{L^2 \to L^2}.
$$

Set $f(x/m) = g(x)$. We define $T_i$, a pseudodifferential operator
of order $1-\alpha$,  by $ T_i = -
D_1^{2-\alpha}(-\Delta)^{-1}\partial_i $ so that $D_1^{2-\alpha} =
-\sum_{i = 1}^n T_i \partial_i$. Denote the kernel of $T_i
\partial_i$ by $k_i$. Then $[T_i\partial_i, g] = [T_i, g]\partial_i
+ T_i(\partial_i g)$ and the kernel $[T_i, g]$ is given by
$$K_i(x, y) = k_i(x, y)(g(y) - g(x)).$$
Suppose that $g$ is in Lipschitz class $\dot{\Lambda}^{2-\alpha}$. Then
$K_i$ is easily shown to be a Calder\'{o}n-Zygmund kernel. Here
$\|g\|_{\dot{\Lambda}^{2-\alpha}} = \sup_{x,y}\frac{|g(x) - g(y)|}{|x-y|^{2-\alpha}}$.
 We show that $[T_i, g]\partial_i$ is bounded in $L^2$ and its norm is
 bounded by a constant multiple of
$\|g\|_{\dot{\Lambda}^{2-\alpha}}$. By Theorem 3 in p. 294 of \cite{st}
and the duality of $[T_i, g]\partial_i$ we have only to show that
\begin{align}\label{cz-est}
\|[T_i, g]\partial_i (\zeta(\cdot/N))\|_{L^2} \lesssim
\|g\|_{\dot{\Lambda}^{2-\alpha}}N^\frac n2
\end{align}
for a fixed bump function $\zeta$ supported in the unit ball.
From the kernel estimate $|k_i(x, y)| \lesssim |x-y|^{-n + \alpha-1}$ it follows that $|K_i(x, y)|
\lesssim \|g\|_{\dot{\Lambda}^{2-\alpha}}|x-y|^{-(n-1)}$. If $|x| < 2N$,
then $$ |[T_i, g]\partial_i (\zeta(\cdot/N))(x)| \lesssim
\|g\|_{\dot{\Lambda}^{2-\alpha}}. $$ Thus $\|[T_i, g]\partial_i
(\zeta(\cdot/N))\|_{L^2(\{|x| < 2N\}} \lesssim
\|g\|_{\dot{\Lambda}^{2-\alpha}}N^\frac n2$. If $|x| \ge 2N$, then
$$|[T_i, g]\partial_i (\zeta(\cdot/N))(x)| \lesssim
\|g\|_{\dot{\Lambda}^{2-\alpha}}N^{n-1}|x|^{-(n-1)}.$$ Therefore
\begin{align*}
\|[T_i, g]\partial_i (\zeta(\cdot/N))\|_{L^2(\{|x| \ge 2N\}} &\lesssim \|g\|_{\Lambda^{2-\alpha}}N^{n-1}(\int_{|x| > 2N} |x|^{-2(n-1)}\,dx)^\frac12 \\
&\lesssim \|g\|_{\dot{\Lambda}^{2-\alpha}}N^\frac n2.
\end{align*}
This shows \eqref{cz-est} and thus $\|[T_i, g]\partial_i \|_{L^2\to
L^2} \lesssim \|g\|_{\dot{\Lambda}^{2-\alpha}} = m^{-(2-\alpha)}\|f\|_{\dot \Lambda^{2-\alpha}}$.
If $x \neq y$, then
$$
|f(x) - f(y)| \le |x-y|\int_0^1 |\nabla f (z_s)|\,ds, \quad z_s = x + s(y-x).
$$
Since $|\psi'(\rho)| \le C\rho^{-1}$ for $\rho > 0$, from Lemma \ref{weighted con} and mass conservation it follows that
$$
 |\nabla f (z_s)| \lesssim |z_s|^{1-\alpha}\|u\|_{L^2}^2 = ||x|-s|x-y||^{1-\alpha}\|\varphi\|_{L^2}^2,
$$
provided $\alpha < n-2$. By a simple calculation we see that if $0 < \theta < 1$, then
$$
\sup_{a > 0}\int_0^1|a - s|^{-\theta}\,ds \le C_\theta.
$$
Thus from this we get that
$$
|f(x) - f(y)| \lesssim |x-y|^{2-\alpha}\|\varphi\|_{L^2}^2,
$$
which implies that
\begin{align}\label{comm-lip}
\|[T_j, g]\partial_j\|_{L^2\to L^2} \lesssim m^{-(2-\alpha)}\|\varphi\|_{L^2}.
\end{align}

On the other hand, $T_i (\partial_i g)(u)(x) = \int k_i(x, y)
\partial_i g(y)u(y)\,dy$ and
$$
|T_i (\partial_i g)(u)(x)| \lesssim \int
|x-y|^{-(n-(\alpha-1))}|\partial_ig(y)||u(y)|\,dy.
$$

From the duality and Lemma \ref{weighted con}
\begin{align*}
|\langle u, T_j ((\partial_jg) u) \rangle| &= |\langle T_j^* u, (\partial_j g) u\rangle|\\
&\lesssim m^{-1}\|u\|_{L^2}\||(\partial_j) f(\cdot/m)|\int |\cdot-y|^{-(n-\alpha+1)}|u(y)|\,dy\|_{L^2}\\
&\lesssim m^{-(2-\alpha)}\|u\|_{L^2}^3\||\cdot|^{1-\alpha}|\int |\cdot-y|^{-(n-\alpha+1)}|u(y)|\,dy\|_{L^2},
\end{align*}
where $T_j^*$ is the dual operator of $T_j$. Using the Stein-Weiss inequality \eqref{s-w ineq} for $\beta = \alpha-1$, $\lambda = n-\alpha+1$ and $ p =2$, we get $|\langle u, T_j, \partial_j gu \rangle| \lesssim m^{-(2-\alpha)} \|u\|_{L^2}^4$. Thus
\begin{align}\label{comm-error}
\|T_j, \partial_j g\|_{L^2 \to L^2} \lesssim m^{-(2-\alpha)} \|\varphi\|_{L^2}^2.
\end{align}
Therefore from \eqref{comm-lip} and \eqref{comm-error} it follows that
$$
\|[D_m^{2-\alpha}, f]\|_{L^2 \to L^2} =
m^{2-\alpha}\|[D_1^{2-\alpha}, g]\|_{L^2 \to L^2} \le C\|\varphi\|_{L^2}^2.
$$
Here it should be noted that the constant $C$ does not depend on $m$.

\subsection{Propagation of the moment}
We finally show a propagation estimate of the moment. In what follows, Bessel potential estimates are used crucially. So, we introduce some basics of Bessel potential.

Let us denote the kernels of
Bessel potential $D^{-\beta}\;(\beta > 0)$ and $|\nabla|^\alpha D^{-\alpha} D^{-\beta}$ by
$G_\beta(x)$ and $K(x)$, respectively, where $D = \sqrt{1 - \Delta}$. Then
$$
K(x) = \sum_{k=0}^\infty A_k G_{2k+\beta}(x),
$$
where the coefficients $A_k$ is given by the expansion
$(1-t)^\frac\alpha2 = \sum_{k = 0}^\infty A_k t^k$ for $|t| < 1$
with $\sum_{k \ge 0}|A_k| < \infty$. One can show that $(1+
|x|)^\ell K \in L^1$ for $\ell \ge 1$. In fact, we have that for $2k+\beta < n$
\begin{align}\label{bessel1}
G_{2k+\beta}(x) \le C( |x|^{-n + \beta}\chi_{\{|x| \le 1\}}(x) +
e^{-c|x|}\chi_{\{|x| > 1\}}(x)).
\end{align}
And also from the integral representation of $G_{2k+\beta}$ such that
$$
G_{2k+\beta}(x) = \frac1{(4\pi)^{n/2}\Gamma(k+\beta/2)}\int_0^\infty
\lambda^{(2k+\beta-n)/2-1} e^{-|x|^2/4\lambda} e^{ -
\lambda}\,d\lambda
$$
we deduce that if $2k+\beta \ge n$, then
\begin{align}\label{bessel2}
G_{2k+\beta}(x) \le C(\chi_{\{|x| \le 1\}}(x) + e^{-c|x|}\chi_{\{|x|
> 1\}}(x)).
\end{align}
Here the constants $C$ of \eqref{bessel1} and \eqref{bessel2} are
independent of $k$. The functions $(1+|x|)^\ell G_{2k+\beta}$ have a uniform
integrable majorant on $k$ for each $\ell \ge 1$ and so $K$ does. For more details see p.132--135 of \cite{st-sing}.

We introduce the moment estimate
\begin{proposition}\label{mom prop}
Let $m > 0$ and $T^*$ be the maximal existence time of solution $u
\in C([0,T^*);H^k)$, $k = \max(\frac\gamma2, 4)$ to \eqref{main
eqn}. If $x\varphi, |x|\nabla\varphi \in L^2$, then $x u(t), |x|\nabla u(t) \in L^2$ for all
$t \in [0, T^*)$. Moreover, we have for $t \in [0, T^*)$
\begin{align*}
&\||x|u\|_{L^2} \le \||x|\varphi\|_{L^2} + Cm^{\alpha-3}\int_0^t\|u(t')\|_{H^2}\,dt',\\
&\||x|\nabla u\|_{L^2} \le \||x|\nabla \varphi\|_{L^2} +
Cm^{\alpha-3}\int_0^t \|u(t')\|_{H^3}\,dt',
\end{align*}
where $C$ does not depend on $m$.
\end{proposition}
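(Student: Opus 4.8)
The plan is to run an $L^2$ energy estimate on the weighted unknowns $x_j u$ and $x_j\partial_k u$ ($1\le j,k\le n$), obtained by commuting the weight, resp. the weight composed with a derivative, through \eqref{main eqn}; note $\||x|u\|_{L^2}^2=\sum_j\|x_ju\|_{L^2}^2$ and $\||x|\nabla u\|_{L^2}^2=\sum_{j,k}\|x_j\partial_ku\|_{L^2}^2$, so it suffices to bound each $\|x_ju\|_{L^2}$ and $\|x_j\partial_ku\|_{L^2}$. Since $xu(t)$ and $|x|\nabla u(t)$ are not a priori known to lie in $L^2$ (this is part of the claim), I would first carry out the computation with a truncated, bounded weight $\theta(x/R)x_j$, where $\theta$ is smooth, $\theta\equiv1$ on the unit ball and supported in the ball of radius $2$, derive bounds uniform in $R$, and then pass to the limit $R\to\infty$ by monotone convergence; the continuity $xu,|x|\nabla u\in C([0,T^*);L^2)$ then follows from the (now justified) integral equation. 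Below I suppress this regularization. Using the identity $D_m^\alpha x=xD_m^\alpha-\alpha D_m^{\alpha-2}\nabla$ (the one used in the proof of Lemma \ref{virial-lemma1}, which also underlies \eqref{commut1}), one gets
\begin{align*}
i\partial_t(x_j u)&=D_m^\alpha(x_j u)+\alpha D_m^{\alpha-2}\partial_j u+x_j F(u),\\
i\partial_t(x_j\partial_k u)&=D_m^\alpha(x_j\partial_k u)+\alpha D_m^{\alpha-2}\partial_j\partial_k u+x_j\partial_k F(u).
\end{align*}

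Because $U(t)$ is unitary on $L^2$, Duhamel's formula for these equations reduces the problem to estimating the four source terms in $L^2$, uniformly in time:
\begin{align*}
\|x_j u(t)\|_{L^2}&\le\|x_j\varphi\|_{L^2}+\int_0^t\big(\alpha\|D_m^{\alpha-2}\partial_j u\|_{L^2}+\|x_j F(u)\|_{L^2}\big)\,dt',\\
\|x_j\partial_k u(t)\|_{L^2}&\le\|x_j\partial_k\varphi\|_{L^2}+\int_0^t\big(\alpha\|D_m^{\alpha-2}\partial_j\partial_k u\|_{L^2}+\|x_j\partial_k F(u)\|_{L^2}\big)\,dt'.
\end{align*}
The linear sources are handled by a Fourier-multiplier bound: since $1<\alpha<2$ and $m>0$, $(m^2+|\xi|^2)^{(\alpha-2)/2}|\xi|\lesssim m^{\alpha-3}\langle\xi\rangle^2$, whence $\|D_m^{\alpha-2}\partial_j u\|_{L^2}\lesssim m^{\alpha-3}\|u\|_{H^2}$ and $\|D_m^{\alpha-2}\partial_j\partial_k u\|_{L^2}\lesssim m^{\alpha-3}\|u\|_{H^3}$, with constants independent of $m$; this is the origin of the $m^{\alpha-3}$ and of the $H^2,H^3$ norms in the statement.

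For the nonlinear sources, since $x_j$ commutes with multiplication by the real function $K_\gamma(|u|^2)$ we have $x_jF(u)=\lambda K_\gamma(|u|^2)(x_ju)$, so $\|x_jF(u)\|_{L^2}\le\|K_\gamma(|u|^2)\|_{L^\infty}\|x_ju\|_{L^2}\lesssim\|u\|_{\dot H^{\gamma/2}}^2\|x_ju\|_{L^2}$ by \eqref{supx frac}. Likewise $x_j\partial_kF(u)=\lambda\big(x_j\partial_kK_\gamma(|u|^2)\big)u+\lambda K_\gamma(|u|^2)(x_j\partial_ku)$, the second term being of the same Gr\"onwall type; the first term is estimated by $|\partial_kK_\gamma(|u|^2)|\lesssim |\cdot|^{-\gamma-1}*|u|^2$ (using $|\psi'(\rho)|\lesssim\rho^{-1}$), then distributing the weight via $|x|\le|x-y|+|y|$ and invoking the weighted convolution estimate (Lemma \ref{weighted con}), the Stein--Weiss inequality \eqref{s-w ineq}, and the Bessel-potential kernel bounds \eqref{bessel1}--\eqref{bessel2}; this yields a bound of the form (a quantity controlled by $\|u\|_{L^\infty_{[0,t]}H^k}$) times $\big(\|x_ju\|_{L^2}+\|x_j\nabla u\|_{L^2}+\|u\|_{L^2}\big)$. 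Collecting everything gives a closed Gr\"onwall system for $\big(\||x|u(t)\|_{L^2},\||x|\nabla u(t)\|_{L^2}\big)$ driven by $m^{\alpha-3}\|u\|_{H^2}$, $m^{\alpha-3}\|u\|_{H^3}$ and powers of $\|u\|_{H^k}$; Gr\"onwall's inequality then gives $xu(t),|x|\nabla u(t)\in L^2$ and the stated inequalities, the amplification factor $\exp\!\big(C\int_0^t\|u\|_{\dot H^{\gamma/2}}^2\big)$ (finite on $[0,T^*)$ because $u\in C([0,T^*);H^k)$ by Proposition \ref{local}) being absorbed into the constant $C$.

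The main obstacle, and the reason the Bessel-potential preliminaries were set up, is the nonlocality of $D_m^\alpha$: commuting the (truncated) weight past $D_m^\alpha$ produces not a differential but a nonlocal error, and, more seriously, in the weighted nonlinear term one must move a factor of $|x|$ across convolution operators whose kernels are exactly Bessel-type, where the decisive input is the integrability of $(1+|x|)^\ell$ times those kernels for $\ell\ge1$ (together with their uniform-in-$m$ scaling). The secondary technical point — legitimizing the formal time differentiation, controlling the commutators with the truncated weight uniformly in $R$, and passing to the limit $R\to\infty$ — is routine given the regularity $k=\max(\gamma/2,4)$ but has to be stated carefully.
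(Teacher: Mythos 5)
Your route (commute the weight through the equation, then Duhamel plus unitarity of $U(t)$, then Gr\"onwall) differs from the paper's and, as written, does not give the stated inequalities. The decisive structural point you miss is that the nonlinearity should not appear at all: the paper differentiates the regularized quadratic form $\mathbf{m}_\varepsilon(t)=\langle u,|x|^2e^{-2\varepsilon|x|}u\rangle$, so that $\frac{d}{dt}\mathbf{m}_\varepsilon=i\langle u,[H,|x|^2e^{-2\varepsilon|x|}]u\rangle$ with $H=D_m^\alpha+\mathcal V$, and the real multiplication operator $\mathcal V=\lambda K_\gamma(|u|^2)$ commutes with the weight, so only the commutator with $D_m^\alpha$ survives (equivalently, $\operatorname{Im}\langle x_ju,K_\gamma(|u|^2)x_ju\rangle=0$ in an energy identity). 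By passing instead to the integral equation you keep the nonlinear sources, and your Gr\"onwall argument produces the factor $\exp\big(C\int_0^t\|u\|_{\dot H^{\gamma/2}}^2\,dt'\big)$ multiplying both terms; this cannot be ``absorbed into the constant $C$'', since it grows in $t$ (and is not uniformly bounded on $[0,T^*)$), whereas the proposition's right-hand side contains no nonlinear contribution and a constant depending only on $n,\alpha$. The same applies, a fortiori, to your treatment of $x_j\partial_k F(u)$.

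Two further concrete problems. First, your multiplier inequality $(m^2+|\xi|^2)^{(\alpha-2)/2}|\xi|\lesssim m^{\alpha-3}\langle\xi\rangle^2$ with an $m$-independent constant is false: at $|\xi|=1$ the left side is comparable to $m^{\alpha-2}$ and the right side to $m^{\alpha-3}$, so the ratio grows like $m$; it holds only on a bounded range of $m$. The paper extracts the factor $m^{\alpha-3}$ by rescaling, $u_m(x)=m^{-n}u(x/m)$, which converts $D_m$ into $D_1=\sqrt{1-\Delta}$ (see \eqref{comm-m}), splitting $[D^{\alpha}, xe^{-\varepsilon|x|/m}]=[D^{\alpha-2},\cdot\,]D^2+D^{\alpha-2}[D^2,\cdot\,]$, and using the integrability of $|x|G_{2-\alpha}$ together with the $\varepsilon$-uniform Lipschitz bound $|ye^{-\varepsilon|y|/m}-xe^{-\varepsilon|x|/m}|\lesssim|x-y|$; the powers of $m$ come from the scaling, not from a pointwise symbol bound against $\langle\xi\rangle^2$. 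Second, the regularization you ``suppress'' is where the real work lies: without it your computation is circular (it presupposes $x_ju,\,x_j\partial_ku\in L^2$ to write the Duhamel bounds), and with a truncation $\theta(x/R)x_j$ the commutator with the nonlocal operator $D_m^\alpha$ is no longer $-\alpha D_m^{\alpha-2}\partial_j$ plus harmless terms; proving $R$-uniform bounds for $[D_m^\alpha,\theta(\cdot/R)x_j]$ requires essentially the Bessel-potential commutator estimates the paper develops for its exponential cutoff. To repair your argument, work with the weighted quadratic form (or the $L^2$ energy identity for the weighted unknowns) so the Hartree potential cancels, keep the exponential regularization with bounds uniform in $\varepsilon\le m$, and obtain the $m$-dependence by the scaling argument rather than the symbol estimate.
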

For the proof for $\alpha = 1, 2$ see \cite{caz} for NLS and \cite{frohlenz2} for semirelativistic case.

\newcommand{\mm}{\mathbf{m}}
\begin{proof}[Proof of Proposition \ref{mom prop}]
We first consider the case $m > 0$. Let us denote $$\mm_\varepsilon(t) = \langle u(t), |x|^2 e^{-2\varepsilon |x|} u(t) \rangle$$ for $0 < \varepsilon \le m$.
From the regularity of $u$ and \eqref{comm-m} it follows that
\begin{align}\begin{aligned}\label{m-ep}
\mm_\varepsilon'(t) &= i m^{n+\alpha-2} \langle u_m, [D^\alpha, |x|^2 e^{-2\varepsilon |x|/m}]u_m\rangle\\
&= -2m^{n+\alpha-2}\,{\rm Im}\, \langle xe^{-\varepsilon|x|/m} u_m,
[D^\alpha, x e^{-\varepsilon|x|/m}]u_m\rangle,
\end{aligned}\end{align}
where $D = D_1 = \sqrt{1 - \Delta}$ and  $u_m(x) = m^{-n}u(x/m)$. Then
\begin{align*}
\langle xe^{-\varepsilon|x|/m} u_m, &[D^\alpha, x e^{-\varepsilon |x|/m}]u_m\rangle\\
&= \langle x e^{-\varepsilon |x|/m}u_m, [D^{\alpha-2}, x e^{-\varepsilon |x|/m}] D^2 u_m\rangle\\
&\qquad\quad + \langle D^{\alpha-2}(x e^{-\varepsilon |x|/m} u_m), [D^2, x e^{-\varepsilon |x|/m}] u_m\rangle\\
&\equiv I + I\!\!I.
\end{align*}
To handle $I$ set $\beta = 2-\alpha$ and denote the kernel of Bessel potential $D^{-\beta}$ by $G_{\beta}$. Then by mean value inequality such that
$|ye^{-\varepsilon|y|/m} - xe^{-\varepsilon|x|/m}| \lesssim |x-y|$,
we have
\begin{align*}
\quad &|([D^{-\beta}, xe^{-\varepsilon|x|/m}]D^2u_m)(x)|\\
&= \left|\int G_{\beta}(x-y) ye^{-\varepsilon|y|/m}D^2u_m(y)dy - xe^{-\varepsilon|x|/m}\int G_{\beta}(x-y)D^2u_m(y)dy\right|\\
&= \left|\int G_{\beta}(x-y) (ye^{-\varepsilon|y|/m} - xe^{-\varepsilon|x|/m})D^2u_m(y)dy\right|\\
&\lesssim \int G_{\beta}(x-y)|x-y||D^2u_m(y)|dy.
\end{align*}
Since $|x|G_{\beta}$ is integrable, from Cauchy-Schwarz inequality and Young's inequality it follows that
\begin{align}\label{I}
|I| \le C m^{-\frac n2 - 2}\|u\|_{H^2}\sqrt{\mathbf
m_\varepsilon},
\end{align}
where $C$ is independent of $\varepsilon$ and $m$.

Now using Cauchy-Schwarz inequality we estimate $I\!\!I$ as
follows:
\begin{align}\begin{aligned}\label{II}
|I\!\!I| &= |\langle D^{-\beta}( x e^{-\varepsilon |x|/m} u_m), [D^2,
xe^{-\varepsilon |x|/m}]  u_m\rangle|\\
&= |\langle D^{-\beta}( x e^{-\varepsilon |x|/m} u_m), (\Delta(x e^{-\varepsilon |x|/m}) + 2\nabla(x e^{-\varepsilon |x|/m})\cdot\nabla)u_m\rangle|\\
& \le C m^{-n}\|u\|_{H^1}\sqrt{\mathbf
m_\varepsilon},
\end{aligned}\end{align}
where $C$ is independent of $\varepsilon$ and $m$. We have used the fact
$$|\Delta(x e^{-\varepsilon |x|/m}) + 2\nabla(x e^{-\varepsilon |x|/m}| \le C.$$

Substituting the estimates for $I$ and $I\!\!I$ into \eqref{m-ep},
we have
$$
\mathbf m_\varepsilon \le \mathbf m_\varepsilon(0) +
Cm^{\alpha-3}\int_0^t \|u(t')\|_{H^2} \sqrt{\mathbf
m_\varepsilon(t')}dt'.
$$
Gronwall's inequality yields
$$
\sqrt{\mathbf m_\varepsilon} \le \sqrt{\mathbf m_\varepsilon(0)} +
Cm^{\alpha-3}/2\int_0^t \|u(t')\|_{H^2}dt'.
$$
Thus letting $\varepsilon \to 0$, it
follows that
\begin{align}\label{m-est1}
\||x|u\|_{L^2} \le \||x|\varphi\|_{L^2} + Cm^{\alpha-3}/2\int_0^t
\|u(t')\|_{H^2}\,dt'\;\;\mbox{for all}\;\;t \in [0, T^*).
\end{align}

Now let us observe that $u \in H^3$, set $v = \partial_j u$. Then one
can easily show that
$$
\frac d{dt}\langle v \, |x|^2e^{-2\varepsilon|x|} v\rangle =
i\langle v, [D_m^\alpha, |x|^2e^{-2\varepsilon|x|}] v\rangle.
$$
So, by the same estimates as above we get
\begin{align}\label{m-est2}
\||x|\nabla u\|_{L^2} \lesssim \||x|\nabla \varphi\|_{L^2} +
m^{\alpha-3}\int_0^t \|u(t')\|_{H^3}\,dt'\;\;\mbox{for all}\;\;t
\in [0, T^*).
\end{align}

\end{proof}

\bigskip

\section*{Acknowledgments} Y. Cho and G. Hwang were supported by National Research Foundation
of Korea Grant funded by the Korean Government (2011-0005122).

\end{document}